\documentclass[11pt,final]{article} 

\usepackage{amsfonts,amsthm,amsmath,amssymb, mathtools,bm}
\usepackage{dsfont}
\usepackage{graphicx,color}
\usepackage{hyperref}
\usepackage{a4wide}
\usepackage{multirow}
\usepackage{cleveref}
\usepackage{enumitem}
\usepackage{tikz-cd}
\usepackage{subcaption}
\usepackage{comment}
\usepackage{bbm}


\newtheorem{proposition}{Proposition}[section]
\newtheorem{theorem}[proposition]{Theorem}
\newtheorem{lemma}[proposition]{Lemma}
\newtheorem{corollary}[proposition]{Corollary}
\newtheorem{definition}[proposition]{Definition}

\newtheorem{example}[proposition]{Example}

\renewenvironment{proof}{\medskip\noindent{\textbf{Proof.}}%
  \hspace{1pt}}{\hspace{-5pt}{\nobreak\quad\nobreak\hfill\nobreak%
    $\square$\vspace{2pt}\par}\smallskip\goodbreak}

\newenvironment{proofof}[1]{\smallskip\noindent{\textbf{Proof~of~#1.}}%
  \hspace{1pt}}{\hspace{-5pt}{\nobreak\quad\nobreak\hfill\nobreak%
    $\square$\vspace{2pt}\par}\smallskip\goodbreak}

\numberwithin{equation}{section}
\numberwithin{figure}{section}
\numberwithin{table}{section}

\setlength{\delimitershortfall}{-0.1pt}
\allowdisplaybreaks[4]

\renewcommand{\phi}{\varphi}
\renewcommand{\theta}{\vartheta}
\renewcommand{\epsilon}{\varepsilon}
\renewcommand{\L}[1]{\mathbf{L^#1}}

\renewcommand{\d}[1]{\mathinner{\mathrm{d}{#1}}}

\renewcommand{\L}[1]{\mathbf{L^#1}}
\newcommand{\dd}[1]{\mathinner{\mathrm{d}{#1}}}
\newcommand{\Lloc}[1]{\mathbf{L^{#1}_{loc}}}

\newcommand{\C}[1]{\mathbf{C^{#1}}}

\newcommand{\Cc}[1]{\mathbf{C_c^{#1}}}

\newcommand{\W}[2]{\mathbf{W^{#1,#2}}}
\newcommand{\BV}{\mathbf{BV}}
\newcommand{\cBV}[1]{\mathcal{BV}^{#1}}

\newcommand{\modulo}[1]{{\left|#1\right|}}
\newcommand{\norma}[1]{{\left\|#1\right\|}}
\newcommand{\reali}{{\mathbb{R}}}
\newcommand{\naturali}{{\mathbb{N}}}

\newcommand{\tv}{\mathop\mathrm{TV}}

\newcommand{\esssup}{\mathop\mathrm{ess~sup}}

\newcommand{\sgn}{\mathop\mathrm{sgn}}
\newcommand{\spt}{\mathop\mathrm{spt}}
\newcommand{\Id}{\mathop{\mathbf{Id}}}

\newcommand{\caratt}[1]{\chi_{\strut#1}}

\newcommand{\ghost}[1]{}

\begin{document}

\title{Coexisting Automated and Human-Driven Vehicles: \\
  Well-Posedness \\ of a Mixed Nonlocal-Local Traffic Model}

\author{Rinaldo M.~Colombo$^1$ \and Mauro Garavello$^2$ \and Claudia
  Nocita$^2$}

\maketitle

\footnotetext[1]{Unit\`a INdAM \& Dipartimento di Ingegneria
  dell'Informazione, Universit\`a di Brescia, Italy.\hfill\\
  \texttt{rinaldo.colombo@unibs.it}}

\footnotetext[2]{Dipartimento di Matematica e Applicazioni,
  Universit\`a di Milano -- Bicocca, Italy.\hfill\\
  \texttt{mauro.garavello@unimib.it} and
  \texttt{c.nocita@campus.unimib.it}}

\begin{abstract}
  \noindent We present a macroscopic traffic flow model where standard
  vehicles coexist with vehicles informed on the traffic
  distribution. The resulting mixed nonlocal-local
  integro-differential PDEs is proved to generate a locally Lipschitz
  continuous semigroup whose orbits are uniquely characterized as
  solutions to the system, according to a natural definition of
  solution. The norms and function spaces adopted are intrinsic to the
  different nature of the equations.

  \medskip

  \noindent\textbf{Keywords:} Mixed Nonlocal-Local PDEs;
  Macroscopic Traffic Models; Interactions AVs Bulk Traffic; Integro-Differential Equations.
  \medskip

  \noindent\textbf{MSC~2020:} 76A30, 35L65, 35R09.
\end{abstract}

\section{Introduction}
\label{sec:introduction}

The forthcoming large scale introduction of autonomous vehicles (AVs)
leads to consider vehicles that are \emph{nonlocal}, in the sense that
they are aware of traffic conditions also at a significant distance
from their location, both in front and behind them. Analytically, this
leads to nonlocal models, i.e., to models where integrals in the space
variable allow a vehicle to react to suitable averages of the traffic
density around --- in front and behind --- its position. The current
literature offers a variety of models built on these premises,
see~\cite{aggarwal2023systems, aggarwal2023wellposedness, MR3447130,
  MR4731004, MR4238781, MR3670045}.

At the same time, the presence of standard --- or \emph{local} ---
vehicles, i.e., aware of traffic conditions only at their position,
can not be neglected. Indeed, the interplay between \emph{local} and
\emph{nonlocal} vehicles (or individuals) is being considered in the
traffic community,
see~\cite{CUThayat2023trafficsmoothingusingexplicit,
  CUTlee2024trafficcontrolconnectedautomated, Rasouli2020900,
  Schieben201969, Sun2023}, see also~\cite{MR4104950, MR4720869} for
an analytical approach.

In the model we present here, these two kinds of drivers/vehicles are
traveling simultaneously along the same road and, hence, they are
interacting. The time and space dependent density of the \emph{local}
ones is $r = r (t,x)$, while the \emph{nonlocal} ones are described by
$\rho = \rho(t,x)$, $t$ being time and $x$ the one dimensional
coordinate along the road. This results in the following natural extension of
the classical LWR model~\cite{LighthillWhitham, Richards}
\begin{equation}
  \label{eq:5}
  \left\{
    \begin{array}{l}
      \partial_t \rho
      +
      \partial_x \left(\rho \;
      v_{NL}\left((\rho + r)  *\eta\right)\right)
      =0
      \\
      \partial_t r
      + \partial_x \left(r \; v_L\left(\rho+r \right)\right)
      = 0.
    \end{array}
  \right.
\end{equation}
The functions $v_{NL}$ and $v_L$ are the \emph{speed laws} of the two
populations, while $\eta$ is a general averaging kernel. We underline
that $\eta$ is required neither to have compact support, nor to enjoy
any sort of symmetry. In particular, the forward horizon can very well
be far larger than the backward one, see for
instance~\cite[\S~3]{OperaPrima}. On the other hand, $\eta$ may not be
monotone, so that no \emph{a priori} constant bound on the nonlocal
density $\rho$ is in general available for all times, though
$\L\infty$ bounds are available on all compact time intervals. On the
contrary, in the present setting, the local density $r$ is proved to
remain for all times within the interval of the physical densities.

From the modeling point of view, \eqref{eq:5} provides an environment
not only for traffic descriptions, but also for investigating the
interplay between AVs and standard \emph{``short sighted''}
(bulk) vehicles. According to a common approach,
see~\cite{CUThayat2023trafficsmoothingusingexplicit,
  CUTlee2024trafficcontrolconnectedautomated}, $v_{NL}$ acts as the
strategy assigned to the AVs in order to control the evolution of the
whole traffic. It is then necessary to allow for the coexistence of
AVs following different behavioral rules. We are thus led to
extend~\eqref{eq:5} to the case of different classes, say $k$, of AVs:
\begin{equation}
  \label{eq:1}
  \left\{
    \begin{array}{l}
      \partial_t \rho^i
      +
      \partial_x \left(\rho^i \;
      v_{NL}^i\left(\left(\sum\limits_{j=1}^k \rho^j + r\right) * \eta^i\right)\right)
      =0
      \qquad\qquad i=1, \ldots, k
      \\
      \partial_t r
      + \partial_x \left(r \; v_L\left(\sum\limits_{j=1}^k \rho^j + r \right)\right)
      = 0.
    \end{array}
  \right.
\end{equation}

Here we focus on the analytic properties of~\eqref{eq:1}, providing
its global in time well-posedness and a full set of \emph{a priori}
estimates. The different nature of the equations in~\eqref{eq:1}
imposes the use of different techniques, relying on different
assumptions. In particular, the \emph{natural} regularities of
$x \mapsto \rho (t,x)$ and of $x \mapsto r (t,x)$ turn out to be
different, as also the norms that allow well-posedness and stability
estimates.
More precisely, the initial data $(\rho_o, r_o)$ to be assigned
to~\eqref{eq:1} as well as the solutions $x \mapsto \rho (t,x)$ and
$x \mapsto r (t,x)$ find their natural environment in the sets
\begin{equation}
  \label{eq:75}
  \cBV{1} (\reali;\reali^k)
  {\coloneqq}
  \left\{
    \rho \in \W11 (\reali; \reali^k) \colon
    \rho' \in \BV{} (\reali; \reali^k)
  \right\}
  \mbox{ and }
  \cBV{} (\reali;\reali)
  {\coloneqq}
  (\L1 \cap \BV{}) (\reali;\reali)
\end{equation}
whereas stability estimates holds in the $\W11 (\reali; \reali^k)$ and
$\L1 (\reali; \reali)$ norms. Indeed, the local Lipschitz continuous
dependence of solutions with respect to the initial datum and with
respect to time holds in the $\W11 (\reali; \reali^k)$ and
$\L1 (\reali; \reali)$ topologies, provided the norms
\begin{equation}
  \label{eq:64}
  \norma{\rho}_{\cBV{1} (\reali;\reali^k)}
  \coloneqq
  \norma{\rho}_{\W11 (\reali;\reali^k)} + \tv (\rho')
  \quad \mbox{ and } \quad
  \norma{r}_{\cBV{} (\reali;\reali)}
  \coloneqq
  \norma{r}_{\L1 (\reali;\reali)} + \tv (r)
\end{equation}
are bounded. In particular, Example~\ref{ex:1} below shows that the
use of the $\L1$ norm on $\rho$ may not ensure the continuous
dependence of the solutions to~\eqref{eq:1} on the initial datum.

The motivation for these differences lies in the need to use {Kru\v
  zkov}~\cite{MR267257} definition of solution for the local equation,
since~\eqref{eq:1} obviously comprises the case of the scalar
conservation law
$\partial_t r + \partial_x \left(r \, v_L (r)\right) = 0$. In turn,
this imposes $x \mapsto \rho (t,x)$ and $x \mapsto \rho_o (x)$ to be
at least in $\W11 (\reali; \reali^k)$. Once this regularity is
achieved, {Kru\v zkov} theory~\cite{MR267257}, as well as the
stability estimates~\cite{MR2512832}, can be exploited to deal with
the local equation in the proof of the well-posedness
of~\eqref{eq:1}. On the other hand, to deal with the nonlocal
equation, our starting point are the well-posedness result and the
stability estimates in~\cite{OperaPrima}, which we here improve to
suit to the present setting. All this leads to the asymmetry between
the assumptions required on the local and nonlocal equations and data
--- see~\ref{item:17} vs.~\ref{item:21} and
$\rho_o \in \cBV1 (\reali; \reali^k)$
vs.~$r_o \in \cBV{}(\reali;\reali)$. In particular, the present lack
of a well-posedness and stability theory for general systems of
conservation laws prevents us from allowing $r$ to be vector valued.

In this connection, we underline a natural problem arising from the
present result. The so-called \emph{nonlocal-to-local limit} recently
attracted a relevant attention in the literature, see~\cite{ bressan_entropy_2021, MR4651679, zbMATH07839635,
  zbMATH07658225, MR3961295, MR4613802, MR3944408} and the references
therein. An analogous result in the case of~\eqref{eq:5}, i.e.,
letting $\eta$ tend to Dirac delta, would yield the existence of solutions
to the LWR model with $2$ populations introduced in~\cite{MR2020123},
whose well-posedness is, to our knowledge, still an open problem. The case~\eqref{eq:1} of more than $2$ populations might then follow.

The proof below relies on careful estimates, separately, on the local
and on the nonlocal equations constituting~\eqref{eq:1}. As a first
step, more regular initial data are considered. Adapting and, where
necessary, improving the available well-posedness and stability
estimates in~\cite{OperaPrima, MR2512832} allow to ignite a recursive
procedure that yields a Cauchy sequence converging to a solution
to~\eqref{eq:1}, first locally in time and then globally. Careful
\emph{a priori} estimates ensure the (local) Lipschitz continuous
dependence of solutions on time and on the initial data. A final
bootstrap procedure allows to relax the assumptions on the initial
data and leads to the existence result presented in
Theorem~\ref{thm:main}. Theorem~\ref{thm:unique} proves uniqueness of
solutions to~\eqref{eq:1}, also making use of the basic estimates on
the $2$ separate problems.

\goodbreak

The next section presents the main results while all proofs are
deferred to Section~\ref{sec:analytical-proofs}.

\section{Main Results}
\label{sec:main-results}

Throughout, $T>0$ is fixed and $k \in \naturali \setminus\{0\}$. We
also shorten the notation for the sum $\rho^1 + \cdots+\rho^k$ setting
$\Sigma\rho \coloneqq \sum_{j=1}^k \rho^j$. We denote by
$\mathcal C{(\cdots)}$ a continuous and positive function non
decreasing in each of its arguments, the exact value of which is not
relevant.

We defer to Lemma~\ref{lem:lui} a characterization of
$\cBV{1} (\reali;\reali^k)$, as defined in~\eqref{eq:75}. Here, we
note that clearly
$\W21 (\reali;\reali^k) \subsetneq \cBV{1} (\reali;\reali^k)
\subsetneq \W11 (\reali;\reali^k)$. Throughout, we use the norm
\begin{equation}
  \label{eq:66}
  \norma{(\rho,r)}_{\W11(\reali; \reali^k)
    \times
    \L1(\reali; \reali)}
  \coloneq
  \norma{\rho}_{\W11(\reali; \reali^k)} +
  \norma{r}_{\L1(\reali; \reali)} \,.
\end{equation}

\begin{definition}
  \label{def:sol}
  By \emph{solution} to~\eqref{eq:1} with initial datum
  $\rho_o \in \W11 (\reali; \reali^k)$, $r_o \in \L1 (\reali; \reali)$
  on the interval $[0,T]$, we mean
  $ (\rho,r) \in \C0 \left([0, T];\W11 (\reali;\reali^k) \times \L1
    (\reali; \reali)\right)$ such that setting
  \begin{eqnarray*}
    v^i (t,x)
    & \coloneq
    & v_{NL}^i\left(\left(\Sigma \rho  (t) * \eta^i\right) (x)
      + \left(r (t)*\eta^i\right) (x)\right) \,,
    \\
    w (t,x,r)
    & \coloneq
    & v_L\left(\Sigma \rho (t,x)  + r\right) \,,
  \end{eqnarray*}
  $\rho^i$, for $i=1, \ldots, k$, is a distributional
  solution~\cite[Definition~4.2]{MR1816648} to
  \begin{equation}
    \label{eq:53}
    \left\{
      \begin{array}{l}
        \partial_t \rho^i + \partial_x \left(\rho^i \, v^i (t,x)\right) =0
        \\
        \rho^i (0,x) = \rho_o^i (x)
      \end{array}
    \right.
  \end{equation}
  and $r$ is a Kru\v zkov solution~\cite[Definition~1]{MR267257} to
  \begin{equation}
    \label{eq:29}
    \left\{
      \begin{array}{l}
        \partial_t r + \partial_x \left(r \, w (t,x,r)\right) =0
        \\
        r (0,x) = r_o (x) \,.
      \end{array}
    \right.
  \end{equation}
\end{definition}
Remark that the necessity for a higher regularity in $\rho$ stems from
the requirement that $r$ be a Kru\v zkov solution. Indeed, the
standard definition of solution to~\eqref{eq:29},
see~\cite[Definition~1]{MR267257}, requires that the flux
$(t,x,r) \mapsto r\, w (t,x,r)$ be differentiable with respect to the
space variable, hence that $x \mapsto \rho (t,x)$ be in
$\W11 (\reali;\reali^k)$. A further motivation of this regularity is
in~\ref{lem:conv} in the proof of Theorem~\ref{thm:main}, see~\Cref{subsec:coupled-problem}.

The higher regularity of the solution to the nonlocal problem
justifies a regularity of the nonlocal velocity $v_{NL}$ higher than
that of the local velocity $v_L$.  Below, we assume that the speed
laws $v_{NL}$ and $v_L$ in~\eqref{eq:1} and the averaging kernel
$\eta$ meet the following requirements:
\begin{enumerate}[label=\bf(vNL)]
\item \label{item:17}
  $v_{NL} \in (\C3\cap \W3\infty)(\reali;\reali^k)$.
\end{enumerate}

\begin{enumerate}[label=\bf(vL)]
\item \label{item:21} $v_L \in \C2(\reali; \reali)$ and there
  exist $V_L,R_L > 0$ such that $v_L(r) = V_L$ for
  $r \leq 0$ and $v_L(r) = 0$ for $r \geq R_L$.
\end{enumerate}

\begin{enumerate}[label=$\bm{(\eta)}$]
\item \label{item:hyp-eta}
  $\eta \in (\C3 \cap \W3\infty)(\reali;\reali^k)$.
\end{enumerate}

\noindent We first present a stability property of
Definition~\ref{def:sol}.

\begin{proposition}
  \label{lem:GoodSol}
  Assume~\ref{item:17}, \ref{item:21} and~\ref{item:hyp-eta}. Fix
  sequences
  $(\rho_{o,n}, r_{o,n}) \in \W11 (\reali;\reali^k) \times \L1
  (\reali;\reali)$,
  $(\epsilon_n,e_n) \in \C0 \left([0,T];\W11 (\reali; \reali^k) \times
    \L1 (\reali;\reali)\right)$ and a function
  $(\rho_\infty, r_\infty)$ belonging to
  $ \C0\left([0,T];\W11 (\reali;\reali^k) \times \L1
    (\reali;\reali)\right)$ such that for all $n \in \naturali$ the
  problem
  \begin{displaymath}
    \begin{cases}
      \partial_t\rho_n^i
      +
      \partial_x\left(
      \rho_n^i \,
      v^i_{NL}\left(
      \Sigma \rho_n * \eta^i
      +
      (r_n + e_n)  * \eta^i
      \right)\right)=0 \qquad i=1, \ldots, k
      \\
      \partial_t r_n
      +
      \partial_x\left(
      r_n \, v_L\left( \Sigma (\rho_n+\epsilon_n) + r_n \right)
      \right)=0
      \\
      \rho_n(0,x) = \rho_{o,n}(x)
      \\
      r_n(0,x)=r_{o,n}(x)
    \end{cases}
  \end{displaymath}
  admits a solution $(\rho_n,r_n)$ in the sense of
  Definition~\ref{def:sol}. Moreover, assume that
  \begin{equation}
    \label{eq:70}
    \begin{array}{@{}rcll@{}}
      \displaystyle
      \lim_{n\to+\infty} (\rho_n,r_n)
      & =
      & (\rho_\infty,r_\infty)
      & \mbox{ in }
        \C0 \left([0,T]; \W11(\reali;\reali^k) \times
        \L1(\reali;\reali)\right)\,;
      \\
      \displaystyle
      \lim_{n\to+\infty} (\epsilon_n,e_n)
      & =
      & (0,0)
      & \mbox{ in }
        \C0 \left([0,T]; \W11(\reali;\reali^k) \times
        \L1(\reali;\reali)\right)\,.
    \end{array}
  \end{equation}
  Then, $(\rho_\infty,r_\infty)$ solves, in the sense of
  Definition~\ref{def:sol}, the problem
  \begin{equation}
    \label{eq:71}
    \begin{cases}
      \partial_t\rho^i
      +
      \partial_x\left(
      \rho^i \,
      v^i_{NL}\left(
      \Sigma \rho * \eta^i
      +
      r   * \eta^i
      \right)\right)=0 \qquad i=1, \ldots, k
      \\
      \partial_t r
      +
      \partial_x\left(
      r \, v_L\left( \Sigma \rho + r \right)
      \right)=0
      \\
      \rho(0,x) = \rho_\infty(0,x)
      \\
      r(0,x)=r_\infty(0,x) \,.
    \end{cases}
  \end{equation}
\end{proposition}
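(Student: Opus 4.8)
The plan is to pass to the limit $n\to+\infty$ separately in the two equations of~\eqref{eq:71}, the convergences assumed in~\eqref{eq:70} being strong enough for the purpose. Since $\W11(\reali;\reali^k)$ embeds into $(\C0\cap\L\infty)(\reali;\reali^k)$ with $\norma{u}_{\L\infty}\le\norma{u}_{\W11}$, from~\eqref{eq:70} one obtains that $\rho_n\to\rho_\infty$ and $\epsilon_n\to0$ uniformly on $[0,T]\times\reali$, that $\partial_x\rho_n\to\partial_x\rho_\infty$ and $\partial_x\epsilon_n\to0$ in $\C0([0,T];\L1(\reali;\reali^k))$, and that $r_n\to r_\infty$ and $e_n\to0$ in $\C0([0,T];\L1(\reali;\reali))$; in particular all the $\L1$- and $\L\infty$-norms of these functions are bounded uniformly in $n$. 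Up to a subsequence -- which is harmless, the candidate limit $(\rho_\infty,r_\infty)$ being prescribed -- I may also assume $r_n\to r_\infty$ and $\partial_x\rho_n\to\partial_x\rho_\infty$ a.e.\ on $[0,T]\times\reali$. Finally $\rho_{o,n}=\rho_n(0)\to\rho_\infty(0)$ in $\W11(\reali;\reali^k)$ and $r_{o,n}=r_n(0)\to r_\infty(0)$ in $\L1(\reali;\reali)$, so the initial data converge to those of~\eqref{eq:71}.

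For the nonlocal equations, Young's inequality gives $\norma{(\Sigma\rho_n-\Sigma\rho_\infty)*\eta^i}_{\L\infty}\le\norma{\Sigma\rho_n-\Sigma\rho_\infty}_{\L1}\norma{\eta^i}_{\L\infty}$ and $\norma{((r_n+e_n)-r_\infty)*\eta^i}_{\L\infty}\le\norma{(r_n+e_n)-r_\infty}_{\L1}\norma{\eta^i}_{\L\infty}$, so the arguments of $v_{NL}^i$ in the velocity of the $n$-th problem converge, uniformly on $[0,T]\times\reali$, to those of the limit problem, and all take values in a fixed compact set; since $v_{NL}^i$ is continuous, the velocities $v_n^i$ themselves converge uniformly to $v^i$. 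Together with $\norma{v_{NL}^i}_{\L\infty}<+\infty$ and $\rho_n^i\to\rho_\infty^i$ in $\C0([0,T];\L1)$ this yields $\rho_n^i v_n^i\to\rho_\infty^i v^i$ in $\C0([0,T];\L1(\reali;\reali))$. Passing to the limit in the weak formulation~\cite[Definition~4.2]{MR1816648} of~\eqref{eq:53} for $\rho_n^i$ -- a linear identity in the pair $(\rho_n^i,\rho_n^i v_n^i)$ and in the datum $\rho_{o,n}^i$, tested against $\phi\in\Cc\infty([0,T)\times\reali;\reali)$ -- is then immediate, and proves that $\rho_\infty^i$ is a distributional solution of the first line of~\eqref{eq:71} with datum $\rho_\infty^i(0)$.

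For the local equation, $r_n$ is a Kru\v zkov solution of $\partial_t r+\partial_x(r\,v_L(\Sigma(\rho_n+\epsilon_n)+r))=0$ with datum $r_{o,n}$, and I pass to the limit in its entropy inequalities~\cite[Definition~1]{MR267257}. Fix $0\le\phi\in\Cc\infty([0,T)\times\reali;\reali)$ and, to begin with, a level $k\in\reali$ with $\modulo{\{r_\infty=k\}}=0$ -- which holds for a.e.\ $k$. The entropy term $\modulo{r_n-k}$ and the entropy-flux term $\sgn(r_n-k)(r_n\,v_L(\Sigma(\rho_n+\epsilon_n)+r_n)-k\,v_L(\Sigma(\rho_n+\epsilon_n)+k))$ converge a.e.\ to the corresponding expressions with $(\rho_n,\epsilon_n,r_n)$ replaced by $(\rho_\infty,0,r_\infty)$ -- using $\Sigma(\rho_n+\epsilon_n)\to\Sigma\rho_\infty$ uniformly, $r_n\to r_\infty$ a.e., the continuity and boundedness of $v_L$, and the vanishing in the limit of the bracket on $\{r_\infty=k\}$ -- and, being equi-integrable on $\spt\phi$ (here $\norma{v_L}_{\L\infty}<+\infty$ and $r_n\to r_\infty$ in $\L1$ enter), converge also in $\L1(\spt\phi)$ by Vitali's theorem. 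The source term $\sgn(r_n-k)\,k\,v_L'(\Sigma(\rho_n+\epsilon_n)+k)\,\partial_x\Sigma(\rho_n+\epsilon_n)\,\phi$ is handled by noting that, since $v_L\in\C2$, the factor $v_L'(\Sigma(\rho_n+\epsilon_n)+k)$ converges uniformly and boundedly to $v_L'(\Sigma\rho_\infty+k)$ while $\partial_x\Sigma(\rho_n+\epsilon_n)\to\partial_x\Sigma\rho_\infty$ in $\L1$, so their product converges in $\L1$; multiplying by the uniformly bounded $\phi$ and by $\sgn(r_n-k)$, which converges a.e.\ to $\sgn(r_\infty-k)$ because $\modulo{\{r_\infty=k\}}=0$, and splitting the resulting difference in the usual way, this term converges as well. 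With $r_{o,n}\to r_\infty(0)$ in $\L1$ in the initial-datum term, this shows that $r_\infty$ satisfies, for a.e.\ $k$, the Kru\v zkov entropy inequality relative to the flux $(t,x,r)\mapsto r\,v_L(\Sigma\rho_\infty(t,x)+r)$ and the datum $r_\infty(0)$; a standard approximation in $k$ -- recovering the inequality for a general $k$ as the arithmetic mean of the limits of those for admissible levels $k_j\uparrow k$ and $k_j\downarrow k$ -- extends it to every $k\in\reali$. Hence $r_\infty$ is a Kru\v zkov solution of the second line of~\eqref{eq:71}, and since $(\rho_\infty,r_\infty)\in\C0([0,T];\W11(\reali;\reali^k)\times\L1(\reali;\reali))$, the proof is complete.

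I expect the main obstacle to be the passage to the limit in the entropy formulation of the local equation. Unlike the nonlocal velocities, which converge uniformly and are uniformly bounded, the Kru\v zkov source term involves $\partial_x\Sigma(\rho_n+\epsilon_n)$, which converges only in $\L1$, so one has to combine $\L1$- with $\L\infty$-type convergences carefully and to invoke equi-integrability for the nonlinear flux terms. Moreover, the factors $\sgn(r_n-k)$ are well behaved only for levels $k$ outside the Lebesgue-null set of values on which $r_\infty$ is constant on a set of positive measure, which is why the entropy inequality is first obtained for a.e.\ $k$ and only then extended to all $k$.
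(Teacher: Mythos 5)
Your proof is correct and follows essentially the same route as the paper: both pass to the limit in the distributional formulation of the nonlocal equations and in the Kru\v zkov entropy inequalities of the local one, exploiting the strong convergences in~\eqref{eq:70}. The differences are only technical — the paper extracts an $\L1$ dominating function (via the partial converse of dominated convergence) and applies the Dominated Convergence Theorem to all terms, whereas you use the embedding $\W11(\reali;\reali^k)\hookrightarrow\L\infty(\reali;\reali^k)$, Vitali's theorem, and the restriction to levels $k$ with $\modulo{\{r_\infty=k\}}=0$ followed by the averaging in $k$, which treats the convergence of the $\sgn$ factor in the Kru\v zkov source term more carefully than the paper's direct appeal to a.e.\ convergence.
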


We are now ready to state the existence and stability result for solutions to~\eqref{eq:1}.

\begin{theorem}
  \label{thm:main}
  Let~\ref{item:17}, \ref{item:21} and~\ref{item:hyp-eta} hold. Then,
  problem~\eqref{eq:1} generates a unique map
  \begin{equation}
    \label{eq:79}
    \mathcal{S} \colon \reali_+ \times
    \left(
      \cBV{1}(\reali; \reali^k)
      \times
      \cBV{}(\reali; \reali)\right)
    \to
    \cBV{1}(\reali; \reali^k)
    \times
    \cBV{}(\reali; \reali)
  \end{equation}
  such that
  \begin{enumerate}[label=\textup{\textbf{($\mathcal{S}$\arabic*)}},
    ref=\textup{\textbf{($\mathbf{\mathcal{S}}$\arabic*)}}]
  \item \label{item:1}$\mathcal{S}$ is a global semigroup, i.e.,
    $\mathcal{S}_o = \Id$ and for all $t,\widehat t \in \reali_+$,
    $\mathcal{S}_{t} \circ \mathcal{S}_{\widehat t} =
    \mathcal{S}_{t+\widehat t}$.
  \item \label{item:23} For any
    $(\rho_o,r_o) \in \left( \cBV{1}(\reali; \reali^k) \times
      \cBV{}(\reali; \reali)\right)$ and any $\bar T > 0$, the orbit
    $t \mapsto \mathcal{S}_t (\rho_o, r_o)$ solves~\eqref{eq:1} on
    $[0, \bar T]$ in the sense of \Cref{def:sol}.
  \item \label{item:24} The semigroup $\mathcal{S}$ is locally
    Lipschitz continuous with respect to the norm~\eqref{eq:66}: for
    all $M > 0$, there exists a constant
    \begin{displaymath}
      \mathcal{C}_1 \coloneqq
      \mathcal{C} \left(
        \norma{\eta}_{\W3\infty(\reali; \reali^k)},
        R_L,
        \norma{v_L'}_{\W1\infty(\reali; \reali)},
        \norma{v_{NL}}_{\W3\infty(\reali; \reali^k)},
        M, T
      \right)
    \end{displaymath}
    such that for all $t, \widehat t \in [0,T]$, for all
    $(\rho_o, r_o), \, (\widehat \rho_o, \widehat r_o) \in
    \cBV{1}(\reali; \reali^k) \times \cBV{}(\reali; \reali)$ with
    \begin{displaymath}
      \begin{array}{r@{\,}c@{\,}l@{\qquad}r@{\,}c@{\,}l}
        \norma{\rho_o}_{\cBV{1} (\reali; \reali^k)}
        & \leq
        & M \,,
        & \norma{r_o}_{\cBV{} (\reali; \reali)}
        & \leq
        & M \,,
        \\
        \norma{\widehat \rho_o}_{\cBV{1} (\reali; \reali^k)}
        & \leq
        & M \,,
        & \norma{\widehat r_o}_{\L1 (\reali; \reali)}
        & \leq
        & M \,,
      \end{array}
    \end{displaymath}
    the following estimates hold:
    \begin{displaymath}
      \begin{array}{@{}r@{\,}c@{}l@{}}
        \norma{ \mathcal{S}_t(\rho_o, r_o)
        {-}
        \mathcal{S}_t(\widehat\rho_o,\widehat r_o) }_{\W11(\reali; \reali^k)
        \times
        \L1(\reali; \reali)}
        & \leq
        & (1 \!+\!\mathcal{C}_1 t)
        \norma{(\rho_o {-} \widehat\rho_o,
        r_o {-} \widehat r_o)}_{\W11(\reali; \reali^k)
        \times
        \L1(\reali; \reali)}\,,
        \\
        \norma{ \mathcal{S}_t(\rho_o, r_o)
        {-}
        \mathcal{S}_{\, \hat t}(\rho_o, r_o) }_{\W11(\reali; \reali^k)
        \times
        \L1(\reali; \reali)}
        & \leq
        & \mathcal{C}_1 \, \modulo{t - \widehat t}.
      \end{array}
    \end{displaymath}
  \item \label{item:3} For all $t \in [0,T]$ and for all
    $(\rho_o, r_o) \in \cBV{1}(\reali; \reali^k) \times \cBV{}
    (\reali; \reali)$, call
    $\left(\rho(t), r(t)\right) \coloneqq \mathcal{S}_t(\rho_o, r_o)$. Then,
    there exists a positive constant
    \begin{displaymath}
      \mathcal{C}_2 \coloneqq
      \mathcal{C} \left(
        \begin{array}{c}
          \norma{\eta}_{\W3\infty(\reali; \reali^k)},
          \norma{v'_{L}}_{\W1\infty(\reali; \reali)},
          \norma{v_{NL}}_{\W3\infty(\reali; \reali^k)},
          \\
          \norma{\rho_o}_{\cBV{1}(\reali; \reali^k)},
          \norma{r_o}_{\cBV{}(\reali; \reali)},
          T
        \end{array}
      \right)
    \end{displaymath}
    such that
    \begin{equation*}
      \begin{array}{l@{\qquad}l}
        \norma{\rho(t)}_{\L1(\reali; \reali^k)} =
        \norma{\rho_o}_{\L1(\reali; \reali^k)},
        & \norma{r(t)}_{\L1(\reali; \reali )} \leq
          \norma{r_o}_{\L1(\reali; \reali )},
        \\[.2cm]
        \norma{\rho(t)}_{\cBV{1}(\reali; \reali^k)} \leq
        \mathcal{C}_2,
        & \tv\left(r(t)\right) \leq
          \mathcal{C}_2.
      \end{array}
    \end{equation*}

  \item \label{item:13} Call
    $\left(\rho(t), r(t)\right) \coloneqq \mathcal{S}_t(\rho_o, r_o)$,
    for
    $(\rho_o, r_o) \in \cBV{1}(\reali; \reali^k) \times \cBV{}
    (\reali; \reali)$. If $\rho_o^i \geq 0$ for $i=1, \ldots, k$ and
    $r_o \in [0, R_L]$, then $\rho^i (t,x) \geq 0$ and
    $r (t,x) \in [0, R_L]$ for a.e.~$(t,x) \in \reali_+ \times \reali$
    and
    $\norma{r (t)}_{\L1 (\reali; \reali)} = \norma{r_o}_{\L1
      (\reali,\reali)}$.
  \end{enumerate}
\end{theorem}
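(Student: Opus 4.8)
The plan is to split \eqref{eq:1} into its nonlocal and its local part, run a fixed point iteration that delivers a local in time solution for smooth data, propagate \emph{a priori} estimates to obtain a global solution, and finally bootstrap to the natural spaces by approximation, passing to the limit through Proposition~\ref{lem:GoodSol}. Fix $T$ and assume first $\rho_o \in \W21(\reali;\reali^k)$, $r_o \in \cBV{}(\reali;\reali)$. Starting from $\rho^{(0)} \equiv \rho_o$, given $\rho^{(n)} \in \C0([0,\tau]; \W11(\reali;\reali^k))$ we solve the Kru\v zkov problem for $r^{(n)}$ with flux $(t,x,r) \mapsto r\, v_L(\Sigma\rho^{(n)}(t,x) + r)$, which is Lipschitz in $x$ precisely because $\rho^{(n)} \in \W11$ (indeed in $\mathbf{W^{1,\infty}}$ on the relevant ball): by~\cite{MR267257} and the stability estimates of~\cite{MR2512832} this yields a unique $r^{(n)} \in \C0([0,\tau]; \L1(\reali;\reali))$, with a $\tv$ bound depending on $\tv(r_o)$, on $\norma{v_L'}_{\W1\infty(\reali;\reali)}$ and on $\sup_{[0,\tau]}\norma{\partial_x \Sigma\rho^{(n)}}_{\L1(\reali;\reali)}$. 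Given then $r^{(n)}$, we solve for $\rho^{(n+1)}$ the nonlocal system obtained from \eqref{eq:1} by freezing $r$ to the \emph{known} function $r^{(n)}$ inside the velocity arguments, i.e.\ with velocities $v_{NL}^i(\Sigma\rho^{(n+1)} * \eta^i + r^{(n)} * \eta^i)$; this is the problem of~\cite{OperaPrima} with a given, time dependent additive perturbation of the nonlocal velocity, whose well-posedness and $\cBV1$ estimates we adapt and improve (see below) to produce $\rho^{(n+1)} \in \C0([0,\tau]; \cBV1(\reali;\reali^k))$ with bounds uniform in $n$.

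Next I would show that, on a closed ball of $\C0([0,\tau]; \W11(\reali;\reali^k) \times \L1(\reali;\reali))$ fixed by those uniform bounds, the iteration is a self-map and a contraction once $\tau$ is small enough depending only on the \emph{a priori} bounds. The Cauchy estimate chains two ingredients: the Lipschitz dependence of the Kru\v zkov solution on its $x$-$t$ dependent flux coefficient, giving $\norma{r^{(n)}(t) - r^{(n-1)}(t)}_{\L1(\reali;\reali)} \le \mathcal C \int_0^t \norma{\rho^{(n)}(s) - \rho^{(n-1)}(s)}_{\W11(\reali;\reali^k)}\,\dd{s}$ --- here the flux difference is governed by the $x$-variation of $\Sigma\rho^{(n)} - \Sigma\rho^{(n-1)}$, hence by its $\W11$ norm, which is why that norm is imposed on $\rho$ --- and the Lipschitz dependence of the nonlocal solution on the additive velocity perturbation, giving $\norma{\rho^{(n+1)}(t) - \rho^{(n)}(t)}_{\W11(\reali;\reali^k)} \le \mathcal C\, t \sup_{[0,t]}\norma{r^{(n)} - r^{(n-1)}}_{\L1(\reali;\reali)}$. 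The fixed point $(\rho, r)$ is a local in time solution in the sense of Definition~\ref{def:sol}, and the same estimate gives local uniqueness, whence the semigroup property~\ref{item:1} by the usual patching, or by invoking Theorem~\ref{thm:unique}.

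I would then prove the \emph{a priori} bounds~\ref{item:3} on the maximal existence interval. Integrating the continuity equations gives $\norma{\rho(t)}_{\L1} = \norma{\rho_o}_{\L1}$; comparing $r$ with the null solution through the Kru\v zkov $\L1$-contraction (for the same flux) gives $\norma{r(t)}_{\L1} \le \norma{r_o}_{\L1}$; the Kru\v zkov $\BV$ estimate gives $\tv(r(t)) \le \mathcal C_2$, with the $x$-variation of the flux coefficient controlled by $\norma{\partial_x\Sigma\rho}_{\L1}$; and the $\cBV1$ estimate adapted from~\cite{OperaPrima} gives $\norma{\rho(t)}_{\cBV1} \le \mathcal C_2$. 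The crucial point, and the main novelty of the analysis, is that the argument $\Sigma\rho * \eta^i + r * \eta^i$ of $v_{NL}^i$ keeps $\W3\infty$-type control uniformly in time: since $\eta \in (\C3 \cap \W3\infty)(\reali;\reali^k)$, the term $r * \eta^i$ and its derivatives up to order three are bounded by $\norma{r}_{\L1}\,\norma{\eta}_{\W3\infty}$, so $r$ needs only lie in $\L1 \cap \BV$; this is the improvement over~\cite{OperaPrima} that makes the asymmetric coupling close. Since none of these bounds blows up in finite time, the local solution extends to $[0,T]$, then to all of $\reali_+$, which defines $\mathcal S$ and yields~\ref{item:23}.

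It remains to prove positivity, Lipschitz dependence, and to remove the surplus regularity on $\rho_o$. For~\ref{item:13}: each $\rho^i$ solves the linear continuity equation $\partial_t\rho^i + \partial_x(\rho^i\, v^i(t,x)) = 0$ with $v^i(t,\cdot)$ Lipschitz (as $\rho(t), r(t) \in \BV$ and $\eta$ is smooth), so its flow is well defined and $\rho_o^i \ge 0$ forces $\rho^i(t,\cdot) \ge 0$; then both $r \equiv 0$ and $r \equiv R_L$ are Kru\v zkov solutions of~\eqref{eq:29} --- the latter because $\Sigma\rho \ge 0$ and~\ref{item:21} give $v_L(\Sigma\rho + R_L) = 0$ --- so the comparison principle yields $r(t,x) \in [0, R_L]$ a.e., and then $\norma{r(t)}_{\L1} = \int_\reali r(t) = \int_\reali r_o = \norma{r_o}_{\L1}$. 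For~\ref{item:24}: comparing two solutions $(\rho, r)$, $(\widehat\rho, \widehat r)$ with the same two stability estimates, now coupled, I close a Gr\"onwall inequality in $\norma{\rho(t) - \widehat\rho(t)}_{\W11} + \norma{r(t) - \widehat r(t)}_{\L1}$ to reach the $(1 + \mathcal C_1 t)$ bound, while the continuity in time follows from bounds on $\norma{\partial_t\rho(t)}_{\W11}$ and on the $\L1$ modulus of continuity of $t \mapsto r(t)$, both coming from the equations together with~\ref{item:3}. Finally, for general $\rho_o \in \cBV1(\reali;\reali^k)$ I mollify, obtaining $\rho_{o,n} \in \W21(\reali;\reali^k)$ with $\rho_{o,n} \to \rho_o$ in $\W11$ and $\norma{\rho_{o,n}}_{\cBV1} \le \norma{\rho_o}_{\cBV1}$, since convolution with a mollifier does not increase the total variation; by~\ref{item:24} the orbits $\mathcal S_t(\rho_{o,n}, r_o)$ are Cauchy in $\C0([0,T]; \W11 \times \L1)$, Proposition~\ref{lem:GoodSol} with $\epsilon_n = e_n = 0$ identifies the limit as a solution of~\eqref{eq:1} in the sense of Definition~\ref{def:sol}, and~\ref{item:3} passes to the limit by lower semicontinuity of $\tv$ and of the norms. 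The step I expect to be the main obstacle is the one flagged above: upgrading the nonlocal well-posedness and stability theory of~\cite{OperaPrima} so as to (i) tolerate the extra convolution $r * \eta^i$ in the velocity with $r$ only in $\L1 \cap \BV$ and (ii) yield $\cBV1$ bounds that propagate in time, together with the bookkeeping that makes the mixed $\W11$/$\L1$ Gr\"onwall argument close with constants depending only on the quantities listed in $\mathcal C_1$ and $\mathcal C_2$.
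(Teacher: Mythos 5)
Your proposal is correct and follows essentially the same route as the paper: the alternating nonlocal/local iteration for smooth data, the two coupled stability estimates (the $\W11$ dependence of the nonlocal solution on $r$, as in Lemma~\ref{lem:Linfty}, and the Kru\v zkov/flux-stability of the local solution on $\Sigma\rho$ in $\W11$, as in Lemma~\ref{distloc}), small-time contraction plus \emph{a priori} bounds for global continuation, a Gr\"onwall closure for~\ref{item:24}, comparison and flow arguments for~\ref{item:13}, and mollification combined with Proposition~\ref{lem:GoodSol} (and the characterization of $\cBV1$ as in Lemma~\ref{lem:lui}) to pass to general data. The obstacle you flag — controlling $r*\eta^i$ in $\W3\infty$ through $\norma{r}_{\L1}\norma{\eta}_{\W3\infty}$ so that the nonlocal theory of~\cite{OperaPrima} tolerates the coupling — is exactly the content of the paper's Lemma~\ref{lem:3} together with the fine estimates of Lemma~\ref{lem:5}.
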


\noindent The proof is deferred to~\S~\ref{subsec:coupled-problem}.

The asymmetry in~\ref{item:24} in the local initial conditions $r_o$
and $\widehat r_o$ is consequence of the application
of~\cite[Theorem~2.6]{MR2512832} about the stability of solutions to
local problems with respect to the fluxes. We take advantage of this
asymmetry in the uniqueness proof in Theorem~\ref{thm:unique}.

Note that the constant $\mathcal{C}_1$ in~\ref{item:24} actually
depends also on $\norma{v_L}_{\L\infty (\reali;\reali)}$, as is to be
expected. Indeed, by~\ref{item:21},
$\norma{v_L}_{\L\infty (\reali;\reali)} \leq \mathcal{C}
\left(R_L,\norma{v'_L}_{\L\infty(\reali; \reali)}\right)$.

The necessity of the requirement $\rho_o \in \W11(\reali; \reali^k)$
and the stability with respect to the $\W11$-norm of the nonlocal
initial datum are illustrated by the following example.
\begin{example}
  \label{ex:1}
  Set $k=1$ and choose the speed laws $v_{NL} \equiv 0$ and $v_L$
  satisfying~\ref{item:21} such that $v_L (r) = 1-r$ for
  $r \in [1/2,1]$.  Define for all $n \in \naturali$
  \begin{equation*}
    \rho_{o,n} \coloneq
    \left\{
      \begin{array}{cr @{\,}c@{\,}l}
        0
        & x
        & <
        & 1-1/n
        \\
        1+n(x-1)
        & x
        & \in
        & [1-1/n,1]
        \\
        1-n(x-1)
        & x
        & \in
        & \mathopen]1, 1+1/n]
        \\
        0
        & x
        & >
        & 1+1/n
      \end{array}
    \right.
    \qquad
    r_{o,n} \coloneq \left\{
      \begin{array}{cr@{\,}c@{\,}l}
        0
        & x
        & <
        & 0
        \\
        1
        & x
        & \in
        & [0, 1-1/n]
        \\
        n(1-x)
        & x
        & \in
        & \mathopen]1-1/n, 1]
        \\
        0
        & x
        & >
        & 1
      \end{array}
    \right.
  \end{equation*}
  see Figure~\ref{fig:init-cond}.
  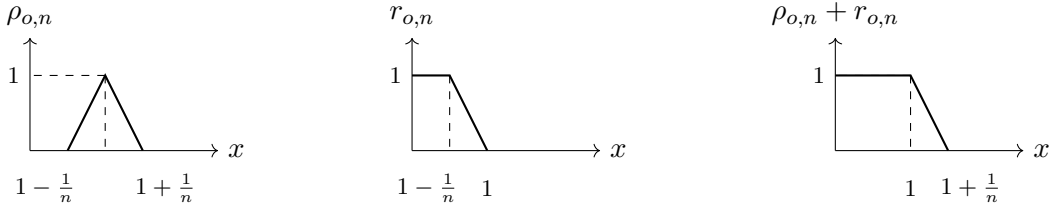
\begin{figure}[h!]
    \centering
    \begin{tikzpicture}
      \draw[->] (0,0) -- (2.5,0) node[right] {$x$}; %
      \draw[->] (0,0) -- (0,1.5) node[above] {$\rho_{o,n}$}; %
      \draw[thick] (1 - 1/2 ,0) -- (1,1) -- (1+1/2,0); \node at (0.2,
      -0.5) {\footnotesize $1-\frac{1}{n}$};%
      \node at (1.8, -0.5) {\footnotesize $1+\frac{1}{n}$}; %
      \draw[dashed] (1,1) -- (0,1) node[left] {\footnotesize $1$};%
      \draw[dashed] (1,1) -- (1,0);
    \end{tikzpicture} \qquad\qquad
    \begin{tikzpicture}
      \draw[->] (0,0) -- (2.5,0) node[right] {$x$}; %
      \draw[->] (0,0) -- (0,1.5) node[above] {$r_{o,n}$}; %
      \draw[thick] (0,1)--(1/2,1) -- (1,0); %
      \node at (0.2, -0.5) {\footnotesize $1-\frac{1}{n}$}; %
      \node at (1, -0.5) {\footnotesize $1$}; %
      \draw[dashed] (0,1) node[left] {\footnotesize $1$}; %
      \draw[dashed] (1/2,1) -- (1/2,0);
    \end{tikzpicture} \qquad\qquad
    \begin{tikzpicture}
      \draw[->] (0,0) -- (2.5,0) node[right] {$x$}; %
      \draw[->] (0,0) -- (0,1.5) node[above] {$\rho_{o,n} + r_{o,n}$}; %
      \draw[thick] (0, 1) -- (1,1) -- (1+1/2,0); %
      \node at (1.8,-0.5) {\footnotesize $1+\frac{1}{n}$}; %
      \draw[dashed] (1,1) -- (0,1) node[left] {\footnotesize $1$};%
      \draw[dashed] (1,1) -- (1,0);
      \node at (1, -0.5) {\footnotesize $1$}; %
    \end{tikzpicture} \qquad\qquad
    \caption{The initial condition $(\rho_{o,n}, r_{o,n})$ and their sum in
      \Cref{ex:1}.}
    \label{fig:init-cond}
  \end{figure}

  \noindent Note that $\rho_{o,n} (x) + r_{o,n} (x) =1$ for all
  $x \in [0,1]$. In this setting,
  \begin{displaymath}
    (\rho_{o,n}, r_{o,n}) \to (0, \caratt{[0,1]}) \mbox{ in } \L1 (\reali;\reali^2)
    \quad \mbox{ but for $t>0$ } \quad
    \mathcal{S}_t (\rho_{o,n}, r_{o,n}) \not \to \mathcal{S}_t (0, \caratt{[0,1]})
    \mbox{ in } \L1 (\reali;\reali^2) \,.
  \end{displaymath}
  Indeed, $(\rho_{o,n}, r_{o,n})$ yields the stationary solution
  to~\eqref{eq:1}, since $v_{NL} \equiv 0$ and
  $v_L\left(\rho_{o,n} (x) + r_{o,n} (x)\right) = v_L(1) = 0$ for all
  $x \in [0,1]$. Hence, for all $t > 0$,
  $\mathcal{S}_t(\rho_{o,n}, r_{o,n}) = (\rho_{o,n}, r_{o,n})$.

  On the other hand, calling
  $\left(\rho (t), r (t)\right) = \mathcal{S}_t(0, \caratt{[0,1]})$,
  we have that $\rho (t) \equiv 0$ and $r$ is the (non stationary)
  entropy solution to the Cauchy problem
  \begin{equation*}
    \left\{
      \begin{array}{ll}
        \partial_t r + \partial_x \left(r \; v_L (r)\right) = 0
        \\
        r(0, x) = \caratt{[0,1]} (x) \,.
      \end{array}
    \right.
  \end{equation*}
  Hence, the mere $\L1$-convergence of the initial data does not
  guarantee the convergence in $\L1(\reali; \reali^2)$ of the solution
  at any positive time.

  Note, for completeness, that $\rho_{o,n} \in \cBV{1} (\reali; \reali)$
  and $r_{o,n} \in \cBV{} (\reali;\reali)$ so that
  Theorem~\ref{thm:main} applies and~\Cref{item:24} holds.
\end{example}

We now prove the uniqueness of solutions to~\eqref{eq:1} in the same
class where existence is ensured by the semigroup trajectories
exhibited in Theorem~\ref{thm:main}.

\begin{theorem}
  \label{thm:unique}
  Let~\ref{item:17}, \ref{item:21} and~\ref{item:hyp-eta} hold. Fix
  $(\rho_o, r_o) \in \cBV1 (\reali; \reali^k) \times \cBV{} (\reali;
  \reali)$ and assume that
  $(\rho_1,r_1),\,(\rho_2, r_2) \in \C0\left([0,T]; \cBV1 (\reali;
    \reali^k) \times \cBV{} (\reali; \reali)\right)$
  solve~\eqref{eq:1} with initial datum $(\rho_o, r_o)$ in the sense
  of Definition~\ref{def:sol}. Then,
  \begin{displaymath}
    (\rho_1,r_1) = (\rho_2, r_2)\,.
  \end{displaymath}
\end{theorem}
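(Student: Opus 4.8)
The plan is to exploit the decoupled structure of~\eqref{eq:1}: the nonlocal block depends on $r$ only through the convolutions $r*\eta^i$, while the local block depends on $\rho$ only through $\Sigma\rho$. Given two solutions $(\rho_1,r_1)$ and $(\rho_2,r_2)$ with the same initial datum, I would set up a Gronwall argument for the quantity
\begin{displaymath}
  \Phi(t) \coloneqq
  \norma{\rho_1(t)-\rho_2(t)}_{\W11(\reali;\reali^k)}
  + \norma{r_1(t)-r_2(t)}_{\L1(\reali;\reali)}\,,
\end{displaymath}
aiming to show $\Phi(t)\le \mathcal{C}\int_0^t \Phi(s)\,\dd s$ on $[0,T]$, which forces $\Phi\equiv 0$. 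To bound $\Phi$ I would freeze the coupling terms and invoke the separate stability estimates already available: for the nonlocal equations, the improved version of the stability result from~\cite{OperaPrima} (as used in the proof of Theorem~\ref{thm:main}) comparing $\rho_1$ and $\rho_2$ as solutions of the \emph{same} type of nonlocal equation but with source velocities differing by the term $(r_1-r_2)*\eta^i$; and for the local equation, the Kru\v zkov--Bouchut--Perthame-type stability with respect to the flux from~\cite[Theorem~2.6]{MR2512832}, comparing $r_1$ and $r_2$ as Kru\v zkov solutions of scalar conservation laws whose fluxes differ through $\Sigma\rho_1$ versus $\Sigma\rho_2$. Crucially, the \emph{a priori} bounds~\ref{item:3} (uniform $\cBV1$ and $\cBV{}$ bounds, hence uniform $\L\infty$ and total-variation control on $[0,T]$) supply the constant $M$ needed so that all the Lipschitz constants appearing in these two estimates are finite and uniform in time.

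Concretely, I would first record that, since $(\rho_i,r_i)\in\C0([0,T];\cBV1\times\cBV{})$ solves~\eqref{eq:1}, each $\rho_i$ is a distributional solution of a linear continuity equation $\partial_t\rho_i^\ell+\partial_x(\rho_i^\ell v_i^\ell(t,x))=0$ with a velocity $v_i^\ell$ that is bounded in $\W3\infty$ uniformly on $[0,T]$ (using~\ref{item:17}, \ref{item:hyp-eta} and the uniform bounds on $\Sigma\rho_i$ and $r_i$). The stability estimate for such equations gives
\begin{displaymath}
  \norma{\rho_1(t)-\rho_2(t)}_{\W11}
  \le \mathcal{C}\int_0^t\!\Big(\norma{\rho_1(s)-\rho_2(s)}_{\W11}
  + \norma{(r_1-r_2)(s)*\eta}_{\W2\infty}\Big)\dd s\,,
\end{displaymath}
and since $\norma{(r_1-r_2)(s)*\eta}_{\W2\infty}\le \norma{\eta}_{\W3\infty}\norma{(r_1-r_2)(s)}_{\L1}$ this term is $\le\mathcal{C}\,\Phi(s)$. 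Symmetrically, writing $r_i$ as a Kru\v zkov solution of $\partial_t r+\partial_x(r\,w_i(t,x,r))=0$ with $w_i(t,x,r)=v_L(\Sigma\rho_i(t,x)+r)$, the flux difference is controlled in the space variable by $\partial_x$ of $v_L(\Sigma\rho_i+r)$, which involves $v_L'$, $v_L''$ and $\partial_x\Sigma\rho_i$; the $\W11$ regularity of $\rho_i$ makes these integrable, and~\cite[Theorem~2.6]{MR2512832} yields
\begin{displaymath}
  \norma{r_1(t)-r_2(t)}_{\L1}
  \le \mathcal{C}\int_0^t\!\Big(\norma{r_1(s)-r_2(s)}_{\L1}
  + \norma{\Sigma\rho_1(s)-\Sigma\rho_2(s)}_{\W11}\Big)\dd s\,,
\end{displaymath}
again bounded by $\mathcal{C}\int_0^t\Phi(s)\,\dd s$. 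Adding the two displays gives $\Phi(t)\le\mathcal{C}\int_0^t\Phi(s)\,\dd s$, and Gronwall's lemma closes the argument.

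The main obstacle I anticipate is the precise form of the flux-stability estimate~\cite[Theorem~2.6]{MR2512832} and verifying its hypotheses in the present non-autonomous, non-convex setting: one must check that the flux $(t,x,r)\mapsto r\,v_L(\Sigma\rho_i(t,x)+r)$ has the regularity and the controlled space-derivative required there, with the correct dependence of the constant on $R_L$, $\norma{v_L'}_{\W1\infty}$ and on the uniform bound for $\norma{\partial_x\Sigma\rho_i}_{\L1}=\norma{\Sigma\rho_i}_{\W11}-\norma{\Sigma\rho_i}_{\L1}$ coming from~\ref{item:3}. This is exactly the point where the asymmetry noted after Theorem~\ref{thm:main} — that~\cite[Theorem~2.6]{MR2512832} needs only an $\L1\cap\BV{}$ bound on one of the two data — is used: here both $r_1$ and $r_2$ enjoy the full $\cBV{}$ regularity as semigroup-type orbits, so the estimate applies directly and symmetrically. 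Once these two stability inputs are in hand, the remainder is the routine Gronwall bookkeeping sketched above, and uniqueness on $[0,T]$ follows; since $T>0$ was arbitrary, it holds on $\reali_+$.
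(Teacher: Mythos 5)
Your nonlocal half is sound: since $\rho_1,\rho_2$ solve the same nonlocal problem (with data $r_1$, $r_2$ in the convolution) and share the initial datum $\rho_o\in\cBV1(\reali;\reali^k)\subset\W11(\reali;\reali^k)$, the $\W11$ stability of Lemma~\ref{lem:Linfty} applies and gives exactly the first Gronwall ingredient you write (this is the paper's estimate~\eqref{eq:83}, only with $\rho_2$ in place of the auxiliary solution). The genuine gap is in the local half. To compare $r_1$ and $r_2$ via the flux-stability result~\cite[Theorem~2.6]{MR2512832} (equivalently, the paper's Lemma~\ref{lem:2}/Lemma~\ref{distloc}) you must verify its hypotheses for the fluxes $w_i(t,x,r)=v_L\left(\Sigma\rho_i(t,x)+r\right)$, and these require, through~\ref{item:14}--\ref{item:16}, that at least one of the $\rho_i$ be $\C1$ in $(t,x)$, have $\partial^2_{xx}\rho_i\in(\C0\cap\L1)$ and $\partial_t\rho_i,\partial^2_{tx}\rho_i\in\L\infty$; moreover the conclusion~\eqref{eq:4} carries a factor $\int_0^t\int_\reali\norma{\partial^2_{xx}w(\tau,x,\cdot)}_{\L\infty}\dd x\dd\tau$, which through~\eqref{eq:22} involves $\norma{\partial^2_{xx}\rho_i}_{\L1}$. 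An arbitrary solution in the sense of Definition~\ref{def:sol} that merely lies in $\C0\left([0,T];\cBV1(\reali;\reali^k)\right)$ provides none of this: $\rho_i(t)'$ is only $\BV$, so $\partial^2_{xx}\rho_i$ is a measure rather than an $\L1$ function, and no $\C1$ or $\partial^2_{tx}$ control is available. So your claim that the estimate ``applies directly and symmetrically'' to the pair $(\rho_1,r_1)$, $(\rho_2,r_2)$ cannot be justified; you also invert the meaning of the asymmetry remark after Theorem~\ref{thm:main} --- the asymmetry is useful precisely because only \emph{one} side of the comparison needs the strong flux regularity, not because both arbitrary solutions happen to be regular. (A smaller slip: the uniform bounds you invoke come from the assumed $\C0\left([0,T];\cBV1\times\cBV{}\right)$ regularity of $\rho_1,\rho_2$, not from~\ref{item:3}, which concerns semigroup orbits only.)

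The paper closes exactly this hole by a regularization-and-triangle-inequality argument: it picks $\rho_o^\epsilon\in\W21(\reali;\reali^k)$ with $\norma{\rho_o^\epsilon-\rho_o}_{\W11}<\epsilon$ and $\norma{\rho_o^\epsilon}_{\W21}\le\norma{\rho_o}_{\cBV1}$, takes the semigroup solution $(\rho^\epsilon,r^\epsilon)$ of Theorem~\ref{thm:main}, whose $\rho^\epsilon$ enjoys the full regularity~\ref{item:14}--\ref{item:16} by Lemma~\ref{lem:5}, and then compares each $(\rho_i,r_i)$ with $(\rho^\epsilon,r^\epsilon)$: Lemma~\ref{lem:Linfty} for the $\rho$-component and Lemma~\ref{distloc} for the $r$-component, where the constant $\mathcal{C}_{11}$ asks for $\norma{\partial_x\rho^\epsilon}_{\L1([0,t];\W11)}$ from the regular solution but only $\norma{\rho_i}_{\C0([0,t];\W1\infty)}$ and $\tv(r_o)$ from the arbitrary one. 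A Gronwall loop then bounds both distances by $\mathcal{C}\,\epsilon$, and letting $\epsilon\to0$ after a triangle inequality yields $(\rho_1,r_1)=(\rho_2,r_2)$. If you want to salvage your direct scheme you would have to either prove extra space--time regularity for arbitrary Definition~\ref{def:sol} solutions, or insert the same intermediate regular solution; as written, the local step of your Gronwall argument does not go through.
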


\noindent The proof is deferred to~\S~\ref{subsec:proof-theor-refthm}.

As a consequence, the trajectories of the semigroup $\mathcal{S}$
constructed in Theorem~\ref{thm:main} are uniquely characterized as
the solutions to~\eqref{eq:1} in the sense of
Definition~\ref{def:sol}.

\section{Analytical Proofs}
\label{sec:analytical-proofs}

Throughout, we use the following notation:
\begin{displaymath}
  \begin{array}{rcl@{\qquad}rcl}
    \norma{\rho}_{\L1 (\reali; \reali^k)}
    & \coloneq
    & \sum_{i=1}^k \norma{\rho^i}_{\L1 (\reali; \reali)}
    & \norma{\rho}_{\W11 (\reali; \reali^k)}
    & \coloneq
    & \norma{\rho}_{\L1 (\reali; \reali^k)}
      + \norma{\rho'}_{\L1 (\reali; \reali^k)}
    \\
    \norma{\rho}_{\L\infty (\reali; \reali^k)}
    & \coloneq
    & \sum_{i=1}^k \norma{\rho^i}_{\L\infty (\reali; \reali)}
    & \norma{\rho}_{\W1\infty (\reali; \reali^k)}
    & \coloneq
    & \norma{\rho}_{\L\infty (\reali; \reali^k)}
      + \norma{\rho'}_{\L\infty (\reali; \reali^k)}
    \\
    \tv (\rho)
    & \coloneq
    & \sum_{i=1}^k \tv (\rho^i)
  \end{array}
\end{displaymath}
and the vector convolution in the space variable is, for $i=1, \ldots, k$,
\begin{equation}
  \label{eq:28}
  \left(\rho (t) * \eta^i\right) (x)
  = \left(
    \left(\rho^1 (t) * \eta^i\right) (x), \ldots,
    \left(\rho^k (t) * \eta^i\right) (x)
  \right) .
\end{equation}

The next two lemmas refer to general elementary functional
analytic results of use below.

\begin{lemma}
  \label{lem:lei}
  With the notation~\eqref{eq:75}, if
  $q \in \cBV{} (\reali; \reali^k)$, then
  $\norma{q}_{\L\infty (\reali, \reali^k)} \leq \frac12 \, \tv (q)$.
\end{lemma}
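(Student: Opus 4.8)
The claim is the one-dimensional Sobolev-type inequality $\norma{q}_{\L\infty(\reali;\reali^k)} \le \tfrac12\,\tv(q)$ for $q \in \cBV{}(\reali;\reali^k) = (\L1\cap\BV{})(\reali;\reali^k)$; recall that here $\norma{q}_{\L\infty}$ and $\tv(q)$ are the componentwise sums. The plan is to reduce to a single scalar component and prove the estimate pointwise. Since the total-variation and sup-norm conventions are additive over components, it suffices to show, for each scalar $q^i \in (\L1\cap\BV{})(\reali;\reali)$, that $\norma{q^i}_{\L\infty(\reali;\reali)} \le \tfrac12\,\tv(q^i)$, and then sum over $i=1,\dots,k$.

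For the scalar statement I would work with the good representative of a $\BV$ function, i.e.\ take $q^i$ to have one-sided limits everywhere and recall that $q^i(x) = \int_{(-\infty,x)} \d{Dq^i}$ up to the constant determined by the behavior at $-\infty$; because $q^i \in \L1(\reali;\reali)$ and has bounded variation, one has $\lim_{x\to-\infty} q^i(x) = \lim_{x\to+\infty} q^i(x) = 0$. Hence for any point $x$,
\begin{displaymath}
  \modulo{q^i(x)} = \modulo{q^i(x) - \lim_{y\to-\infty} q^i(y)} = \modulo{Dq^i\bigl((-\infty,x)\bigr)} \le \modulo{Dq^i}\bigl((-\infty,x)\bigr),
\end{displaymath}
and symmetrically $\modulo{q^i(x)} \le \modulo{Dq^i}\bigl((x,+\infty)\bigr)$. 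Adding these two inequalities gives $2\modulo{q^i(x)} \le \modulo{Dq^i}\bigl((-\infty,x)\bigr) + \modulo{Dq^i}\bigl((x,+\infty)\bigr) \le \modulo{Dq^i}(\reali) = \tv(q^i)$, so $\modulo{q^i(x)} \le \tfrac12\,\tv(q^i)$; taking the supremum over $x$ and summing over $i$ concludes. (An alternative, if one prefers to avoid measure-theoretic representatives, is to first prove it for $q^i \in \Cc\infty(\reali;\reali)$ via $q^i(x)^2 = 2\int_{-\infty}^x q^i (q^i)' \le 2\int_{-\infty}^x \modulo{q^i}\,\modulo{(q^i)'}$ combined with the analogous bound from $+\infty$ — though that route gives a slightly weaker-looking constant and needs more care — and then pass to the limit; the distributional argument above is cleaner and gives the sharp constant directly.)

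The only genuinely delicate point is the justification that $\lim_{x\to\pm\infty} q^i(x) = 0$: a priori a $\BV$ function need only have limits at $\pm\infty$, and integrability forces those limits to vanish, which is a standard fact but should be invoked explicitly. Everything else is routine: choosing the right representative so that pointwise values make sense, and the additivity of the conventions in~\eqref{eq:75}–\eqref{eq:64} over the $k$ components. I do not anticipate any real obstacle here — this is an elementary embedding lemma recorded for later use.
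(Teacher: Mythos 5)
Your proof is correct and follows essentially the same route as the paper: bound $\modulo{q(x)}$ by the variation to the left and to the right of $x$ and use the $\L1$ integrability to kill the contribution at $\pm\infty$, then sum the two bounds. The only cosmetic difference is that you invoke the existence and vanishing of the limits of the $\BV$ representative at $\pm\infty$, whereas the paper sidesteps this by choosing points $a_\epsilon$, $b_\epsilon$ with $\modulo{q(a_\epsilon)}, \modulo{q(b_\epsilon)} < \epsilon$ and letting $\epsilon \to 0$.
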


\begin{proofof}{Lemma~\ref{lem:lei}}
  We use below the left continuous representative of $q$. Since,
  $q \in \L1 (\reali;\reali^k)$, then for every $\epsilon>0$, there
  exist $a_\epsilon$ and $b_\epsilon$ with $a_\epsilon <0$,
  $b_\epsilon>0$, $\modulo{q (a_\epsilon)} < \epsilon$ and
  $\modulo{q (b_\epsilon)} < \epsilon$. For any $x \in \reali$ we thus
  have
  \begin{displaymath}
    \begin{array}{rclcl}
      \modulo{q (x)}
      & \leq
      & \modulo{q (a_\epsilon)}
        + \tv\left(q; \mathopen]-\infty, x\mathclose[\right)
      & \leq
      & \epsilon + \tv\left(q; \mathopen]-\infty, x\mathclose[\right) \,;
      \\
      \modulo{q (x)}
      & \leq
      & \modulo{q (b_\epsilon)}
        + \tv\left(q; [x, +\infty\mathclose[\right)
      & \leq
      & \epsilon + \tv\left(q; [x, +\infty\mathclose[\right) \,.
    \end{array}
  \end{displaymath}
  Summing the two latter results and passing to the infimum over
  $\epsilon$ completes the proof.
\end{proofof}

\begin{lemma}[Characterization of $\cBV{1} (\reali;\reali^k)$]
  \label{lem:lui}
  With the notation~\eqref{eq:75}--\eqref{eq:64},
  $\cBV{1} (\reali;\reali^k)$ is the set of\/ $\W11$ limits of\/
  $\W21$ bounded sequences in
  $(\C2 \cap \W2\infty \cap \W21) (\reali; \reali^k)$. Moreover, if
  $\rho \in \cBV{1} (\reali;\reali^k)$, then there exists
  $\rho_n \in (\C2 \cap \W2\infty \cap \W21) (\reali;\reali^k)$ such
  that $\lim_{n\to+\infty}\rho_n = \rho $ in $\W11 (\reali;\reali^k)$
  and
  \begin{equation}
    \label{eq:58}
    \lim_{n \to +\infty} \norma{\rho_n}_{\W21(\reali; \reali^k)}
    =
    \norma{\rho}_{\cBV{1}(\reali; \reali^k)} \,.
  \end{equation}
\end{lemma}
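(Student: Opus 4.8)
The plan is to build the approximating sequence by mollification and to read off everything --- the exact limit~\eqref{eq:58}, hence the ``Moreover'' part, as well as both inclusions in the characterization --- from elementary properties of convolutions together with the lower semicontinuity of the total variation with respect to $\L1$ convergence.

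First I would fix a mollifier $\mu \in \Cc\infty(\reali;[0,+\infty))$ with $\int_\reali \mu = 1$, set $\mu_n(x) \coloneqq n\,\mu(nx)$, and for a given $\rho \in \cBV1(\reali;\reali^k)$ define $\rho_n \coloneqq \rho * \mu_n$, the convolution acting componentwise. Each $\rho_n$ is $\C\infty$. Since $\rho \in \W11(\reali;\reali^k)$ one has $\rho_n' = \rho' * \mu_n$; and since $\rho' \in \BV(\reali;\reali^k)$, writing $D\rho'$ for its distributional derivative --- a finite vector measure with $\modulo{D\rho'}(\reali) = \tv(\rho')$ --- one also has $\rho_n'' = (D\rho') * \mu_n$. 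Young's inequality (using $\norma{\mu_n}_{\L1(\reali;\reali)} = 1$) then gives $\norma{\rho_n}_{\L1} \le \norma{\rho}_{\L1}$, $\norma{\rho_n'}_{\L1} \le \norma{\rho'}_{\L1}$ and $\norma{\rho_n''}_{\L1} \le \modulo{D\rho'}(\reali) = \tv(\rho')$, together with the corresponding $\L\infty$ bounds, which carry an extra factor $\norma{\mu_n}_{\L\infty(\reali;\reali)} < +\infty$. Hence $\rho_n \in (\C2 \cap \W2\infty \cap \W21)(\reali;\reali^k)$ and, summing the three $\L1$ bounds, $\norma{\rho_n}_{\W21(\reali;\reali^k)} \le \norma{\rho}_{\cBV1(\reali;\reali^k)}$ for every $n$.

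Next I would invoke the standard fact that $\rho_n \to \rho$ in $\W11(\reali;\reali^k)$, which holds since $\rho \in \W11$. This forces $\norma{\rho_n}_{\L1} \to \norma{\rho}_{\L1}$ and $\norma{\rho_n'}_{\L1} \to \norma{\rho'}_{\L1}$, while the lower semicontinuity of the total variation along $\rho_n' \to \rho'$ in $\L1$ yields $\tv(\rho') \le \liminf_n \tv(\rho_n') = \liminf_n \norma{\rho_n''}_{\L1}$. Adding these three relations gives $\liminf_n \norma{\rho_n}_{\W21} \ge \norma{\rho}_{\cBV1}$, which together with the upper bound from the previous step proves $\lim_n \norma{\rho_n}_{\W21} = \norma{\rho}_{\cBV1}$, that is,~\eqref{eq:58}. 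In particular $(\rho_n)$ is $\W21$-bounded, so every element of $\cBV1(\reali;\reali^k)$ is a $\W11$ limit of a $\W21$-bounded sequence in $(\C2 \cap \W2\infty \cap \W21)(\reali;\reali^k)$.

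For the converse inclusion I would take $\rho_n \in (\C2 \cap \W2\infty \cap \W21)(\reali;\reali^k)$ with $C \coloneqq \sup_n \norma{\rho_n}_{\W21} < +\infty$ and $\rho_n \to \rho$ in $\W11(\reali;\reali^k)$. Then $\rho \in \W11$ and $\rho_n' \to \rho'$ in $\L1$; since each $\rho_n'$ is absolutely continuous with $\tv(\rho_n') = \norma{\rho_n''}_{\L1} \le C$, lower semicontinuity of the total variation gives $\tv(\rho') \le C < +\infty$, whence $\rho' \in \BV(\reali;\reali^k)$ and $\rho \in \cBV1(\reali;\reali^k)$, which closes the characterization. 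I do not anticipate a genuine obstacle: the only points demanding some care are the identification $\rho_n'' = (D\rho') * \mu_n$ with the attendant estimate by $\modulo{D\rho'}(\reali)$, and keeping the vector-valued bookkeeping consistent with the componentwise norm conventions. The exact equality in~\eqref{eq:58} in fact comes for free --- the bound $\norma{\rho_n}_{\W21} \le \norma{\rho}_{\cBV1}$ being trivial, and the matching reverse inequality being precisely the lower semicontinuity of the total variation.
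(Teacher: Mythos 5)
Your proposal is correct and follows essentially the same route as the paper: mollification with Young's inequality (including the bound $\norma{\rho_n''}_{\L1}\le\tv(\rho')$ via $\rho_n''=(D\rho')*\mu_n$) for one inclusion and~\eqref{eq:58}, and lower semicontinuity of the total variation under $\L1$ convergence for the converse. Your write-up merely makes explicit the step the paper leaves implicit, namely that $\W11$ convergence forces convergence of the $\L1$ norms of $\rho_n$ and $\rho_n'$, so the matching lower bound in~\eqref{eq:58} follows.
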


\begin{proofof}{Lemma~\ref{lem:lui}}
  Fix $\rho $ such that there exists a sequence $\rho_n$ in
  $(\C2 \cap \W2\infty \cap \W21) (\reali; \reali^k)$ bounded in
  $\W21 (\reali;\reali^k)$ and converging to $\rho$ in
  $\W11 (\reali;\reali^k)$. Hence
  $\rho' = \lim_{n\to +\infty} \rho'_n$ in $\L1 (\reali;\reali^k)$,
  with $\rho'_n$ bounded in $\W11
  (\reali;\reali^k)$. By~\cite[Theorem~3.9]{MR1857292}, we have
  $\rho \in \cBV{1} (\reali;\reali^k)$.

  On the other hand, introduce a mollifier
  $\zeta \in \Cc\infty (\reali; \reali_+)$ with
  $\spt \zeta {\subseteq} [-1,1]$ and $\int_{\reali} \zeta = 1$. For
  $n \in \naturali \setminus \{0\}$, define
  $\zeta_n (x) = n \, \zeta (n\,x)$ and $\rho_n = \rho *
  \zeta_n$. Clearly,
  $\rho_n \in (\C2 \cap \W2\infty \cap \W21) (\reali;\reali^k)$, and
  \begin{flalign*}
    \norma{\rho_n}_{\L1 (\reali;\reali)}
    \leq
    & \norma{\rho}_{\L1 (\reali;\reali)} \; \norma{\zeta}_{\L1 (\reali;\reali)}
    & [\mbox{By~\cite[Theorem~4.15]{zbMATH05633610}}]
    \\
    \norma{\rho'_n}_{\L1 (\reali;\reali)}
    \leq
    & \norma{\rho'}_{\L1 (\reali;\reali)} \; \norma{\zeta}_{\L1 (\reali;\reali)}
    & [\mbox{By~\cite[Theorem~4.15]{zbMATH05633610}}]
    \\
    \norma{\rho_n''}_{\L1 (\reali;\reali)}
    \leq
    & \tv(\rho') \; \norma{\zeta}_{\L1 (\reali;\reali)} \,,
    & [\mbox{By~\cite[Proposition~8.49]{MR1681462}}
  \end{flalign*}
  hence $\rho$ is the $\W11$ limit of a $\W21$ bounded sequence in
  $(\C2 \cap \W2\infty \cap \W21) (\reali; \reali^k)$. By the lower
  semicontinuity in $\L1$ of the total variation,
  $\tv(\rho') \leq \liminf_{n \rightarrow \infty} \tv(\rho'_n) =
  \liminf_{n \rightarrow \infty} \norma{\rho_n''}_{\L1(\reali;
    \reali)}$, proving~\eqref{eq:58}.
\end{proofof}

\subsection{Stability Property of Solutions}
\begin{proofof}{\Cref{lem:GoodSol}}
  Introduce for $i=1,\ldots, k$ the maps
  \begin{eqnarray*}
    v^i_n (t,x)
    & =
    & v_{NL}^i\left(
      \left(\Sigma \rho_n (t) * \eta^i\right) (x)
      +
      \left(\left(r_n (t) + e_n (t)\right)  * \eta^i\right) (x)
      \right)
    \\
    v^i_\infty (t,x)
    & =
    & v_{NL}^i\left(
      \left(\Sigma \rho_\infty (t) * \eta^i\right) (x)
      +
      \left(r_\infty (t)   * \eta^i\right) (x)
      \right)
  \end{eqnarray*}
  and observe that by~\ref{item:17} and~\eqref{eq:70}, we have the
  convergence
  $\lim_{n\to+\infty}v_n^i = v^i_\infty$ a.e. in
  $[0,T] \times \reali$. Moreover, since
  \begin{equation*}
    \norma{\left(\Sigma \rho_n * \eta^i\right)
      +
      \left(r_n  + e_n \right)  * \eta^i}_{\L\infty ([0,T]\times\reali;\reali)}
    \leq
    \norma{\Sigma\rho_n+ r_n+ e_n}_{\C0([0,T];\L1 (\reali;\reali))}
    \, \norma{\eta^i}_{\L\infty (\reali;\reali)}
  \end{equation*}
  is uniformly bounded by~\eqref{eq:70}, also
  $\sup_n \norma{v_n}_{\L\infty ([0,T]\times\reali;\reali^k)} <
  +\infty$. By~\cite[Theorem 4.9]{zbMATH05633610} and~\eqref{eq:70},
  there exists a function $h \in \L1([0,T] \times \reali; \reali)$
  such that for a.e.~$(t,x) \in[0,T] \times \reali$, up to a
  subsequence,
  \begin{equation}
    \label{eq:74}
    \sup_{n \in \naturali} \norma{\rho_n(t,x)} +
    \sup_{n \in \naturali} \norma{\partial_x\rho_n(t,x)} +
    \sup_{n \in \naturali} \norma{\partial_x\epsilon_n(t,x)} +
    \sup_{n \in \naturali} \modulo{r_n(t,x)} \leq h(t,x).
  \end{equation}
  This allows to apply the Dominated Convergence Theorem to the
  equality \begin{displaymath} \int_0^T \int_\reali
    \left(\rho_n^i(t,x) \,\partial_t \phi(t,x) + \rho_n^i(t,x) \,
      v_n^i(t,x) \, \partial_x \phi(t,x) \right) \d{x} \d{t} +
    \int_\reali (\rho_{o,n})^i(x) \, \phi(0,x) \d{x} = 0
  \end{displaymath}
  which holds by Definition~\ref{def:sol}, for all $i = 1, \ldots, k$,
  $n \in \naturali\setminus \{0\}$,
  $\phi \in \Cc1(\mathopen]-\infty, T\mathclose[\times \reali;
  \reali)$. Hence, $\rho_\infty$ solves the nonlocal equation
  in~\eqref{eq:71}.

  Concerning the local part of~\eqref{eq:71}, we proceed
  similarly. Introduce
  \begin{equation*}
    w_n (t,x,r)
    \coloneqq
    v_L\left(\Sigma\left(\rho_n (t,x)+\epsilon_n (t,x)\right)
      + r\right)
    \quad \mbox{ and } \quad
    w_\infty (t,x,r)
    \coloneqq
    v_L\left(\Sigma\rho_\infty (t,x) + r\right) \,.
  \end{equation*}
  Clearly, since $v_L$ satisfies~\ref{item:21} and~\eqref{eq:70},
  $\lim_{n\to+\infty} w_n\left(t,x,r_n (t,x)\right) =
  w_\infty\left(t,x,r_\infty (t,x)\right)$ for
  a.e.~$(t,x) \in [0,T]\times\reali$.  We aim at applying again the
  Dominated Convergence Theorem to the integral inequality in the
  definition of {Kru\v zkov} solution, i.e.
  \begin{align}
    \label{eq:44}
    & \int_0^T \int_\reali \modulo{r_n(t,x) - k} \partial_t \phi(t,x)
      \d{x}\d{t}
    \\
    \label{eq:72}
    +
    & \int_0^T \int_\reali \sgn(r_n(t,x) - k )
      \left[
      r_n(t,x) \, w_n\left(t,x,r_n (t,x)\right) -
      k \, w_n(t,x,k)
      \right] \partial_x \phi(t,x) \d{x}\d{t}
    \\
    \label{eq:73}
    -
    &  \int_0^T \int_\reali \sgn(r_n(t,x)-k) \,
      k \, \partial_x w_n(t,x,k) \, \phi(t,x) \d{x}\d{t} \geq 0
  \end{align}
  for $k \in \reali$, $n \in \naturali$ and
  $\phi \in \Cc\infty([0,T] \times \reali; \reali_+)$. To this aim,
  observe that the term in~\eqref{eq:44} is easily
  dominated. Concerning the integrands in~\eqref{eq:72}
  and~\eqref{eq:73}, compute them for a.e.~$(t,x) \in [0,T] \times \reali$ and use $h$ as in~\eqref{eq:74} to
  get that
  \begin{eqnarray*}
    \modulo{\eqref{eq:72}}
    & \leq
    & \left(\modulo{r_n(t,x)} + \modulo{k}\right)
      \norma{v_L}_{\L\infty(\reali; \reali)} \, \modulo{\partial_x \phi(t,x)}
    \\
    & \leq
    & \left(h(t,x) + \modulo{k}\right)
      \norma{v_L}_{\L\infty(\reali; \reali)} \, \modulo{\partial_x \phi(t,x)} \,,
    \\{}
    \modulo{\mbox{\eqref{eq:73}}}
      & \leq
      & \modulo{k} \,
        \norma{v_L'}_{\L\infty(\reali; \reali)} \,
        \modulo{\Sigma
        \left(\partial_x \rho_n(t,x) + \partial_x \epsilon_n(t,x)\right)} \,
        \modulo{\partial_x \phi(t,x)}
      \\
      & \leq
      & \modulo{k} \,
        \norma{v_L'}_{\L\infty(\reali; \reali)} \,
        h(t,x) \, \modulo{\partial_x \phi(t,x)}.
    \end{eqnarray*}
    Then, as functions of $(t,x)$, the dominating quantities obtained
    above are in $\L1([0,T] \times \reali; \reali)$ and the Dominated
    Convergence Theorem allows to
    pass~\eqref{eq:44}--\eqref{eq:72}--\eqref{eq:73} to the limit
    $n \to +\infty$, proving that $r_\infty$ solves~\eqref{eq:71}.
  \end{proofof}

\subsection{The Nonlocal Problem}
\label{subsec:non-local-problem}

We are interested in the system
\begin{equation}
  \label{eq:7}
  \left\{
    \begin{array}{l}
      \partial_t \rho^i +
      \partial_x \left(\rho^i \;
      V^i \left(t,x, \rho *\eta^i\right)\right) = 0
      \qquad i=1, \ldots, k
      \\
      \rho (0,x) = \rho_o (x)
    \end{array}
  \right.
\end{equation}
where $V = V (t,x,q)$ is such that $V \in \C0 ([0,T];\mathcal{V})$ and
$\mathcal{V}$ is the space of speed laws
\begin{equation}
  \label{eq:65}
  \begin{array}{@{}rcl@{}}
    \mathcal{V}
    & \coloneqq
    & \left\{
      V \in \C3(\reali \times \reali^k;\reali^k)
      \colon
      \begin{array}{l@{}}
        V(\cdot, 0) \in \L\infty(\reali;\reali^k),
        \\
        {}[\partial_x V \quad  \nabla_q V ] \in
        \W2\infty(\reali \times \reali^k; \reali^k \times \reali^{k\times k})
      \end{array}
      \right\}
    \vspace{.2cm}\\
    \norma{V}_{\mathcal{V}}
    & \coloneqq
    & \norma{V(\cdot, 0)}_{\L\infty (\reali;\reali^k)} + \norma{[\partial_x V
      \quad \nabla_q V ]}_{\W2\infty(\reali\times \reali^k;\reali^k\times \reali^{k\times k})}.
  \end{array}
\end{equation}

\begin{definition}
  \label{def:solution}
  By \emph{solution to~\eqref{eq:7} on the time interval $I$} we mean $\rho \in \C0 \left(I; \L1 (\reali; \reali^k)\right)$ that
  solves
  \begin{equation}
    \label{eq:20}
    \!\!\!\!\!
    \left\{
      \begin{array}{@{}l@{\quad}l}
        \partial_t \rho^i + \partial_x \left(\rho^i \; v^i (t,x)\right) = 0
        & (t,x) \in I {\times} \reali
        \\
        \rho^i (0,x) = \rho_o^i (x)
        & x \in \reali
      \end{array}
    \right.
    \mbox{ where }
    v^i (t,x)
    \coloneqq
    V^i \left(t, x, \left(\rho (t) * \eta^i\right) (x) \right) .
  \end{equation}
\end{definition}

\noindent Above, a \emph{solution to~\eqref{eq:20}} is a
distributional solution~\cite[Definition~4.2]{MR1816648}, which is
also a weak entropy solution in the sense
of~\cite[Definition~1]{MR267257}, see
also~\cite[Corollary~II.1]{MR1022305}.

For $i=1, \ldots, k$, given
$v^i \in \C0\left([0,T]; \W11(\reali; \reali)\right)$, the solution to
\begin{displaymath}
  \left\{
    \begin{array}{l}
      \partial_t \rho^i + \partial_x \left(\rho^i \; v^i (t,x)\right) = 0
      \\
      \rho^i (0,x) = \rho_o^i (x)
    \end{array}
  \right.
\end{displaymath}
in the sense of Definition~\ref{def:solution} is
\begin{eqnarray}
  \label{eq:13}
  \!\!\!\!\!\!\!\!\!
  \rho^i(t,x)
  & =
  & \rho_o^i\left(X^i(0;t,x)\right) \; \mathcal{E}^i(0,t,x)
    \qquad\qquad (t,x) \in [0,T] \times \reali, \qquad \mbox{ where}
  \\
  \label{eq:12}
  \!\!\!\!\!\!\!\!\!
  \mathcal{E}^i(t,t_o,x_o)
  & =
  & \exp \left(
    \int_{t_o}^{t} \partial_x v^i \left(s, X^i(s;t_o,x_o)\right) \d{s}
    \right)
    \qquad (t,t_o,x_o) \in [0,T]^2 \times \reali
\end{eqnarray}
and $s \mapsto X^i(s;t,x)$ is the solution to the Cauchy problem
\begin{equation}
  \label{eq:11}
  \begin{cases}
    \frac{\d{~}}{\d{s}} X^i(s;t,x) = v^i\left(s, X^i(s;t,x)\right)
    \\
    X^i(t;t,x) = x \,.
  \end{cases}
\end{equation}
The change of variable
\begin{equation}
  \label{eq:59}
  \xi = X^i(0;t_o,x_o)
\end{equation}
and the following {formul\ae}, from~\cite[Theorem~2.3.2,
Theorem~2.3.3]{zbMATH05197323}, are frequently used below:
\begin{equation}
  \label{eq:17}
  \partial_{x_o} X^i(t;t_o,x_o)
  = \mathcal{E}^i(t,t_o,x_o)
  \quad \mbox{ and } \quad
  \partial_{t_o} X^i(t;t_o,x_o)
  = - \mathcal{E}^i(t,t_o,x_o) \, v^i(t_o,x_o) \,.
\end{equation}

For completeness, we recall below --- without proof --- properties
concerning the problem~\eqref{eq:7} useful in the sequel.
\begin{lemma}[{\cite[Theorem~2.3]{OperaPrima}}]
  \label{lem:4}
  Assume $\eta \in \W2\infty(\reali;\reali^k)$,
  $V \in \C0\left([0,T];\mathcal{V}\right)$ and
  $\rho_o \in \cBV{}(\reali;\reali^k)$ Then,
  \begin{enumerate}[label=\bf\arabic*.]
  \item \label{item:4} There exists a unique solution $\rho$
    to~\eqref{eq:7} on $[0,T]$ in the sense of
    Definition~\ref{def:solution}.

  \item \label{item:44} If
    $\widehat{\rho}_o \in \cBV{}(\reali;\reali^k)$ and
    $\widehat{\rho}$ is the corresponding solution to~\eqref{eq:7} on
    $[0,T]$, then for all $t \in [0,T]$
    \begin{align*}
      &\quad\norma{\rho(t) - \widehat{\rho}(t)}_{\L1(\reali;\reali^k)}
      \\
      &\!\leq \!
        C\!\left(\! \norma{\eta}_{\W2\infty(\reali;\reali^k)},
        \norma{V}_{\C0([0,t];\mathcal{V})}, \norma{\rho_o}_{\L1(\reali;\reali^k)},
        \norma{\widehat{\rho}_o}_{\cBV{}(\reali;\reali^k)},
        t \right)\, \norma{\rho_o - \widehat{\rho}_o}_{\L1(\reali;\reali^k)}.
    \end{align*}
  \item \label{item:25} $\rho$ is locally Lipschitz continuous in $t$:
    for all $t,\widehat{t} \in [0,T]$
    \begin{equation*}
      \norma{\rho (t) - \rho (\widehat{t})}_{\L1 (\reali; \reali^k)} \!
      \leq
      C\!\left(\norma{\eta}_{\W2\infty (\reali;\reali^k)},
        \norma{V}_{\C0([0,T];\mathcal{V})},
        \norma{\rho_o}_{\cBV{} (\reali;\reali^k)}, T\right)
      \modulo{t {-} \widehat{t}} \,.
    \end{equation*}

  \item \label{item:45} If
    $\widehat{V} \in \C0\left([0,T];\mathcal{V}\right)$ and
    $\widehat{\rho}$ is the corresponding solution to~\eqref{eq:7} on
    $[0,T]$, then for all $t \in [0,T]$:
    \begin{eqnarray*}
      &
      & \norma{\rho(t) - \widehat{\rho}(t)}_{\L1(\reali;\reali^k)}
      \\
      & \leq
      & \, \mathcal C\left(\norma{\eta}_{\W2\infty(\reali;\reali^k)},
        \norma{V}_{\C0([0,t];\mathcal{V})},
        \norma{\rho_o}_{\cBV{}(\reali;\reali^k)},t \right)
        \norma{V - \widehat{V}}_{\C0([0,t];\mathcal{V})} \,\,t .
    \end{eqnarray*}

  \item \label{item:46} For all $t \in [0,T]$ and for
    $i = 1, \ldots,k$,
    $\norma{\rho^i(t)}_{\L1(\reali;\reali)} =
    \norma{\rho_o^i}_{\L1(\reali;\reali)}$.

  \item \label{item:22} For $i=1, \ldots, k$, if $\rho_o^i \geq 0$,
    then $\rho^i (t,x) \geq 0$ for
    a.e.~$(t,x) \in [0,T] \times \reali$.
  \end{enumerate}
\end{lemma}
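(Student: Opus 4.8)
The plan is to establish \Cref{lem:4} --- as in~\cite[Theorem~2.3]{OperaPrima} --- by the standard fixed-point scheme for nonlocal conservation laws: freeze the nonlocality, so that the problem becomes a \emph{linear} transport equation solved explicitly by characteristics, then close a contraction and carefully track all the constants.

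\emph{Step 1: the frozen linear problem.} For a prescribed $v^i\in\C0([0,T];\W1\infty(\reali;\reali))$, the Cauchy problem $\partial_t\rho^i+\partial_x(\rho^i v^i)=0$, $\rho^i(0)=\rho_o^i$, has the unique solution~\eqref{eq:13}--\eqref{eq:12}--\eqref{eq:11}; since $v^i$ is Lipschitz in $x$ this formula is well defined and, by the classical theory of linear transport, it coincides with the distributional/weak-entropy solution of \Cref{def:solution} (cf.~\cite[Corollary~II.1]{MR1022305}). From this representation, the change of variable~\eqref{eq:59} and the identities~\eqref{eq:17}, I would derive, for $t\in[0,T]$ and $i=1,\dots,k$: \emph{(i)} since $\mathcal E^i>0$, one has $\sgn\rho^i(t,x)=\sgn\rho_o^i\!\left(X^i(0;t,x)\right)$, hence $\norma{\rho^i(t)}_{\L1}=\norma{\rho_o^i}_{\L1}$ and $\rho_o^i\ge0\Rightarrow\rho^i(t)\ge0$; \emph{(ii)} a total variation estimate $\tv\!\left(\rho^i(t)\right)\le e^{Ct}\,\tv(\rho_o^i)+\left(e^{Ct}-1\right)C'\norma{\rho_o^i}_{\L1}$ with $C,C'$ depending only on $\norma{\partial_x v^i}_{\L\infty}$ and $\norma{\partial_{xx}v^i}_{\L\infty}$ --- obtained by differentiating~\eqref{eq:13} in $x$, transporting the measure $D\rho_o^i$ as the pushforward under $X^i(0;t,\cdot)$ and bounding $\partial_x\mathcal E^i$ through $\norma{\partial_{xx}v^i}_{\L\infty}$; note that the constant in front of $\tv(\rho_o^i)$ does \emph{not} depend on $\tv(\rho_o^i)$ itself; \emph{(iii)} $\L1$-stability with respect to the pair $(\rho_o^i,v^i)$: comparing two representation formulas, using Gr\"onwall on the flows~\eqref{eq:11} and the $BV$ bound $\int_\reali\modulo{g(X^i(0;t,x))-g(\widehat X^i(0;t,x))}\,\d{x}\le C\,\norma{X^i(0;t,\cdot)-\widehat X^i(0;t,\cdot)}_{\L\infty}\,\tv(g)$ for $g\in\cBV{}$, one gets $\norma{\rho^i(t)-\widehat\rho^i(t)}_{\L1}\le e^{Ct}\norma{\rho_o^i-\widehat\rho_o^i}_{\L1}+C\,t\,\bigl(1+\tv(\widehat\rho_o^i)\bigr)\,\norma{v^i-\widehat v^i}_{\C0([0,t];\W1\infty)}$ (the total variation entering is that of whichever datum is \emph{not} being differentiated).

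\emph{Step 2: the nonlocal fixed point and global existence.} Given $V\in\C0([0,T];\mathcal V)$, to $\sigma\in\C0([0,T];\cBV{}(\reali;\reali^k))$ associate $v_\sigma^i(t,x)\coloneqq V^i\!\left(t,x,(\sigma(t)*\eta^i)(x)\right)$; by Young's inequality, the chain rule and the definition~\eqref{eq:65} of $\mathcal V$ (two bounded $x$- and $q$-derivatives of $[\partial_xV\ \ \nabla_qV]$), both $\norma{v_\sigma}_{\C0([0,T];\W1\infty)}$ and $\norma{\partial_{xx}v_\sigma}_{\C0([0,T];\L\infty)}$ are bounded by a function of $\norma{V}_{\C0([0,T];\mathcal V)}$, $\norma{\eta}_{\W2\infty}$ and $\norma{\sigma}_{\C0([0,T];\L1)}$ only. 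Let $\mathcal T\sigma$ be the linear solution of Step~1 driven by $v_\sigma$. By Step~1\emph{(i)}, $\norma{(\mathcal T\sigma)(t)}_{\L1}\equiv\norma{\rho_o}_{\L1}$; hence on the set
\[ \mathcal K\coloneqq\left\{\sigma\in\C0([0,T];\cBV{}(\reali;\reali^k)):\ \sigma(0)=\rho_o,\ \norma{\sigma(t)}_{\L1}\equiv\norma{\rho_o}_{\L1},\ \tv(\sigma(t))\le R\ \ \forall\,t\in[0,T]\right\}, \]
which is closed in $\C0([0,T];\L1)$ by lower semicontinuity of $\tv$, the driving fields $v_\sigma$ have $\W1\infty$- and $\partial_{xx}$-bounds \emph{uniform over $\sigma\in\mathcal K$}. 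Choosing $R$ from the a priori bound of Step~1\emph{(ii)} --- possible precisely because that bound does not require controlling the input total variation --- gives $\mathcal T(\mathcal K)\subseteq\mathcal K$ on all of $[0,T]$. Moreover, by Step~1\emph{(iii)} and $\norma{v_{\sigma_1}^i-v_{\sigma_2}^i}_{\W1\infty}\le C\,\norma{\sigma_1-\sigma_2}_{\C0([0,t];\L1)}$ (Young + chain rule + the uniform bounds on $\mathcal K$), $\mathcal T$ contracts for the $\C0([0,\tau];\L1)$-norm once $\tau$ is small, with $\tau$ depending only on $\norma{V}_{\C0([0,T];\mathcal V)}$, $\norma{\eta}_{\W2\infty}$, $\norma{\rho_o}_{\L1}$, $R$. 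Banach's fixed point on $[0,\tau]$, then $[\tau,2\tau]$, etc., and a finite induction produce a solution $\rho\in\mathcal K$ of~\eqref{eq:7} on $[0,T]$: existence in~\ref{item:4}. Then~\ref{item:46} and positivity~\ref{item:22} are Step~1\emph{(i)}, and Lipschitz-in-time~\ref{item:25} follows from $\partial_t\rho^i=-\partial_x(\rho^i v_\rho^i)$, since $\norma{\partial_x(\rho^i v_\rho^i)(s)}_{\L1}\le\norma{v_\rho^i(s)}_{\L\infty}\tv(\rho^i(s))+\norma{\partial_x v_\rho^i(s)}_{\L\infty}\norma{\rho^i(s)}_{\L1}$ is bounded uniformly in $s\in[0,T]$ by the a priori estimates.

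\emph{Step 3: uniqueness and stability.} Let $\rho,\widehat\rho$ solve~\eqref{eq:7} on $[0,T]$ with data $\rho_o,\widehat\rho_o\in\cBV{}$ and speed laws $V,\widehat V\in\C0([0,T];\mathcal V)$. Writing both via their characteristic representations, Step~1\emph{(iii)} together with $\norma{v_\rho^i(s)-v_{\widehat\rho}^i(s)}_{\W1\infty}\le C\bigl(\norma{V-\widehat V}_{\mathcal V}+\norma{\rho(s)-\widehat\rho(s)}_{\L1}\bigr)$ --- $C$ depending on $\norma{\eta}_{\W2\infty}$, the norms of $V$, and the a priori $\L1$- and $\tv$-bounds of the reference solution --- yields, after Gr\"onwall in $t$, an estimate of the form
\[ \norma{\rho(t)-\widehat\rho(t)}_{\L1}\le\mathcal C\!\left(\norma{\eta}_{\W2\infty},\norma{V}_{\C0([0,t];\mathcal V)},\norma{\rho_o}_{\cBV{}},t\right)\left(\norma{\rho_o-\widehat\rho_o}_{\L1}+t\,\norma{V-\widehat V}_{\C0([0,t];\mathcal V)}\right). \]
Taking $V=\widehat V$ gives~\ref{item:44}, taking $\rho_o=\widehat\rho_o$ gives~\ref{item:45}, and $V=\widehat V$, $\rho_o=\widehat\rho_o$ gives uniqueness in~\ref{item:4}; which datum enters the constant (as $\norma{\rho_o}_{\cBV{}}$ or $\norma{\widehat\rho_o}_{\cBV{}}$) reflects only the choice of reference solution in Step~1\emph{(iii)}, i.e.\ the well-known asymmetry of $\L1$-stability for conservation laws.

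\emph{Main obstacle.} The genuinely delicate point is the a priori total variation bound of Step~1\emph{(ii)} with a constant \emph{independent of} $\tv(\rho_o)$: this is what upgrades local-in-time to global-in-time existence, keeping $\mathcal K$ invariant on all of $[0,T]$ rather than on a vanishing interval. It rests on two ingredients: exact conservation of the $\L1$-norm (Step~1\emph{(i)}), which feeds back to bound the $\W2\infty$-regularity of the driving fields $v_\rho$ uniformly in time; and a careful differentiation of the explicit formula~\eqref{eq:13} when $\rho_o$ is only $BV$, so that $D\rho_o^i$ must be transported as a measure by the flow $X^i(0;t,\cdot)$, not handled by a naive change of variables. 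Everything else --- the contraction estimate, the Gr\"onwall closures, the convolution/chain-rule bounds needed to match the precise constant dependences in~\ref{item:44}, \ref{item:25}, \ref{item:45}, and the identification of~\eqref{eq:13} with the entropy solution --- is routine, if lengthy.
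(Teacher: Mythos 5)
The paper does not prove this lemma at all: it is recalled verbatim, explicitly ``without proof,'' as a citation of~\cite[Theorem~2.3]{OperaPrima}, so there is no internal argument to compare yours against. Your sketch reconstructs what is, as far as one can tell, exactly the route of the cited source and of the machinery this paper itself imports from it: the characteristics representation~\eqref{eq:13}--\eqref{eq:12}--\eqref{eq:11}, the identities~\eqref{eq:17} and the change of variable~\eqref{eq:59}, the $\BV{}$ change-of-flow estimate~\cite[Lemma~4.1]{OperaPrima}, the $\W2\infty$ bound on $v_\sigma$ in terms of $\norma{\sigma}_{\L1}$ alone (which is the same computation as Claim~1 of Lemma~\ref{lem:5} and Lemma~\ref{lem:3}), and a Banach fixed point closed by the conservation of the $\L1$ norm. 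The plan is sound and no step would fail; the constant dependences you announce are consistent with~\ref{item:44}--\ref{item:45}, including the asymmetry in which datum contributes its $\cBV{}$ norm. Two presentational cautions if you were to write it out: define the invariant set with $\C0([0,T];\L1)$ continuity plus a uniform total variation bound rather than continuity in the $\cBV{}$ norm (TV-continuity in time is neither needed nor what your closedness/contraction argument actually uses), and close the Gr\"onwall step through the time-integrated form of the velocity difference along the flows, as in~\eqref{eq:30}--\eqref{eq:49} of Lemma~\ref{lem:Linfty}, rather than through the sup-in-time bound, so that the iteration on successive intervals and the concatenation of the TV estimate go through cleanly. These are details of execution, not gaps in the idea.
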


\noindent We now establish
some qualitative and regularity properties of the solution to the
nonlocal problem~\eqref{eq:7} under more restrictive hypotheses on the
initial datum $\rho_o$, on the speed law $V$ and on the kernel function
$\eta$.

\begin{lemma}[Fine estimates on $\rho$]
  \label{lem:5}
  Suppose $\eta \in (\C3 \cap \W3\infty)(\reali;\reali^k)$,
  $V \in \C0\left([0,T];\mathcal{V}\right)$ and
  $\rho_o \in (\C2 \cap \W2\infty \cap \W21)(\reali;\reali^k)$. Then,
  the solution $\rho$ to the nonlocal problem~\eqref{eq:7}, constructed in
  Lemma~\ref{lem:4}, satisfies the following properties.
  \begin{enumerate}[label=\bf (N\arabic*)]
  \item \label{item:32}
    $\rho \in \C1\left([0,T] \times \reali ; \reali^k\right)$.

  \item\label{item:39}
    $\partial_x \rho \in \L1
    \left([0,T];\W11(\reali;\reali^k)\right)$.  In particular, for a.e.~$t \in [0,T]$
    \begin{align}
      \label{eq:45}
      \!\!\!
      \norma{\partial_x \rho(t)}_{\L1(\reali; \reali^k)}
      & \leq
       \mathcal{C} \left(
        \norma{\eta}_{\W3\infty(\reali; \reali^k)},
        \norma{V}_{\C0([0,T];\mathcal{V})},
        \norma{\rho_o}_{\W11(\reali; \reali^k)},
        t
        \right) ,
      \\
      \label{eq:46}
      \!\!\!
      \norma{\partial^2_{xx} \rho(t)}_{\L1(\reali; \reali^k)}
      & \leq
       \mathcal{C} \left(
        \norma{\eta}_{\W3\infty(\reali; \reali^k)},
        \norma{V}_{\C0([0,T];\mathcal{V})},
        \norma{\rho_o}_{\W21(\reali; \reali^k)},
        t
        \right) ,
      \\
      \label{eq:54}
      \!\!\!\!\!\!\!\!\!
      \norma{\partial_x \rho}_{\L1([0,t]\times \reali; \reali^k)}
      & \leq
      \mathcal{C} \left(
        \norma{\eta}_{\W3\infty(\reali; \reali^k)},
        \norma{V}_{\C0([0,T];\mathcal{V})},
        \norma{\rho_o}_{\W11(\reali; \reali^k)},
        T
        \right) t,
      \\
      \label{eq:55}
      \!\!\!\!\!\!\!\!\!\!\!\!
      \norma{\partial^2_{xx} \rho}_{\L1([0,t] \times \reali; \reali^k)}
      & \leq
      \mathcal{C} \left(
        \norma{\eta}_{\W3\infty(\reali; \reali^k)},
        \norma{V}_{\C0([0,T];\mathcal{V})},
        \norma{\rho_o}_{\W21(\reali; \reali^k)},
        T
        \right) t.
    \end{align}

  \item \label{item:26} $\rho$ is $\W11$ locally Lipschitz continuous
    in $t$, i.e., for all $t, \widehat{t} \in [0,T]$
    \begin{displaymath}
      \norma{\rho (t) - \rho (\widehat{t})}_{\W11 (\reali; \reali^k)}
      \leq
      \mathcal{C}
      \left(
        \norma{\eta}_{\W3\infty (\reali; \reali^k)},
        \norma{V}_{\C0([0,T];\mathcal{V})},
        \norma{\rho_o}_{\W21 (\reali;\reali^k)},
        T\right)
      \modulo{t-\widehat{t}} \,.
    \end{displaymath}

  \item\label{item:47} $\rho \in \L\infty\left([0,T] \times \reali ; \reali^k\right)$ with
    \begin{equation}
      \label{eq:40}
      \!\!\!\!\!\!\!\!\!\!\!\!\!\!\!\!\!\!
      \norma{\rho}_{\L\infty([0,T]\times \reali;\reali^k)}
      \leq
      \mathcal{C}\left(
        \norma{\eta}_{\W3\infty (\reali; \reali^k)},
        \norma{V}_{\C0([0,T];\mathcal{V})},
        \norma{\rho_o}_{\L1(\reali;\reali^k)},
        \norma{\rho_o}_{\L\infty(\reali;\reali^k)}, T
      \right) .
    \end{equation}
  \item \label{item:36}
    $\partial_{xx}^2 \rho \in \C0\left([0,T] \times
      \reali;\reali^k\right)$.
  \item \label{item:38}
    $\partial_x \rho, \partial_t \rho, \partial^2_{xx}\rho, \partial_{tx}^2 \rho \in
    \L\infty\left([0,T] \times \reali;\reali^k\right)$.
    In
    particular,
    \begin{equation}
      \label{eq:48}
      \!\!\!\!\!\!\!\!\!\!\!\!\!\!\!\!\!\!\!\!\!
      \norma{\partial_x  \rho}_{\L\infty([0,T] \times \reali; \reali^k)} \!
      \leq
      \!\mathcal{C} \!\left(
        \norma{\eta}_{\W3\infty(\reali; \reali^k)},
        \norma{V}_{\C0([0,T];\mathcal{V})},
        \norma{\rho_o}_{\L1(\reali; \reali^k)},
        \norma{\rho_o}_{\W1\infty(\reali; \reali^k)}, T \right)\!.
    \end{equation}
  \end{enumerate}
\end{lemma}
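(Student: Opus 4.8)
The plan is to read off every assertion of the lemma from the explicit representation~\eqref{eq:13}--\eqref{eq:11}, using crucially that the velocity $v^i(t,x) = V^i\bigl(t,x,(\rho(t)*\eta^i)(x)\bigr)$ is controlled by norms of the \emph{initial} datum alone, so that there is no feedback of higher norms of $\rho$ into $v^i$ and every estimate is a \emph{direct}, non-Gr\"onwall computation. The starting point is that $\norma{\rho(t)}_{\L1(\reali;\reali^k)} = \norma{\rho_o}_{\L1(\reali;\reali^k)}$ by Lemma~\ref{lem:4}, whence $\norma{\rho(t)*\partial_x^j\eta^i}_{\L\infty(\reali;\reali^k)} \le \norma{\rho_o}_{\L1(\reali;\reali^k)}\,\norma{\eta}_{\W3\infty(\reali;\reali^k)}$ for $j=0,1,2,3$; by the chain rule and $V\in\C0([0,T];\mathcal V)$ (which makes $V$ and its relevant derivatives jointly continuous and bounded) it follows that $v^i,\partial_x v^i,\partial^2_{xx}v^i,\partial^3_{xxx}v^i$ are jointly continuous on $[0,T]\times\reali$ and bounded by some $\mathcal C\bigl(\norma{\eta}_{\W3\infty(\reali;\reali^k)},\norma{V}_{\C0([0,T];\mathcal V)},\norma{\rho_o}_{\L1(\reali;\reali^k)}\bigr)$. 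Through~\eqref{eq:12} and its $x$-derivatives taken along characteristics, the same constant then bounds $\norma{\mathcal{E}^i}_{\L\infty}$, $\norma{\partial_x\mathcal{E}^i}_{\L\infty}$ and $\norma{\partial^2_{xx}\mathcal{E}^i}_{\L\infty}$.

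With this in hand I would first fix the regularity of the characteristics. Since each $v^i$ is $\C0$ in $t$ and $\C3$ in $x$ with derivatives jointly continuous up to order $3$ in $x$, the ODE results~\cite{zbMATH05197323} already used for~\eqref{eq:17} give $X^i,\mathcal{E}^i\in\C1$ in $(s;t_o,x_o)$ and $\C3$ in $x_o$, with $\partial_{x_o}X^i=\mathcal{E}^i$ and $\partial^2_{x_ox_o}X^i=\partial_{x_o}\mathcal{E}^i$. Differentiating~\eqref{eq:13} in $x$ once and twice and using $\rho_o\in\C2$ expresses $\partial_x\rho^i$ and $\partial^2_{xx}\rho^i$ as finite sums of products of $\rho_o^i,\rho_o^{i\prime},\rho_o^{i\prime\prime}$ composed with $X^i(0;\cdot,\cdot)$ times $x$-derivatives of $\mathcal{E}^i$ up to order $2$; continuity of all these factors gives~\ref{item:32} and~\ref{item:36}, and differentiating~\eqref{eq:13} in $t$ via the second identity in~\eqref{eq:17} yields continuity of $\partial_t\rho^i$ too. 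Thus $\rho$ is a classical solution and the non-conservative forms $\partial_t\rho^i=-v^i\partial_x\rho^i-\rho^i\partial_x v^i$ and $\partial^2_{tx}\rho^i=-v^i\partial^2_{xx}\rho^i-2\partial_x v^i\partial_x\rho^i-\rho^i\partial^2_{xx}v^i$ hold pointwise.

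The remaining estimates are bookkeeping on these formulas. For~\eqref{eq:45} I would integrate $\modulo{\partial_x\rho^i(t,x)}\le\modulo{\rho_o^{i\prime}(X^i(0;t,x))}\,\mathcal{E}^i(0,t,x)^2+\modulo{\rho_o^i(X^i(0;t,x))}\,\modulo{\partial_x\mathcal{E}^i(0,t,x)}$ over $x$, apply the change of variables $\xi=X^i(0;t,x)$ of~\eqref{eq:59} (so $\mathcal{E}^i(0,t,x)\,\d x=\d\xi$), write $\modulo{\partial_x\mathcal{E}^i}=\mathcal{E}^i\,\modulo{\partial_x\mathcal{E}^i/\mathcal{E}^i}$ and pull the $\L\infty$-bounds of $\mathcal{E}^i$ and $\partial_x\mathcal{E}^i/\mathcal{E}^i$ out of the integral, which leaves $\norma{\rho_o^{i\prime}}_{\L1}+\norma{\rho_o^i}_{\L1}$; one further $x$-derivative and the same change of variables give~\eqref{eq:46}, now also involving $\norma{\rho_o''}_{\L1}$, i.e.\ $\rho_o\in\W21$. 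Integrating~\eqref{eq:45}--\eqref{eq:46} on $[0,t]$ gives~\eqref{eq:54}--\eqref{eq:55}, and, their right-hand sides being bounded on $[0,T]$, also~\ref{item:39}. For the $\L\infty$ bounds,~\eqref{eq:13} directly yields $\modulo{\rho^i(t,x)}\le\norma{\rho_o^i}_{\L\infty}\norma{\mathcal{E}^i}_{\L\infty}$, hence~\eqref{eq:40}; differentiating~\eqref{eq:13} once gives~\eqref{eq:48} (with $\norma{\rho_o}_{\L1}$ and $\norma{\rho_o}_{\W1\infty}$), twice (with $\rho_o\in\W2\infty$) bounds $\partial^2_{xx}\rho^i$ in $\L\infty$, and the two non-conservative identities then bound $\partial_t\rho^i$ and $\partial^2_{tx}\rho^i$ in $\L\infty$, completing~\ref{item:38}. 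Finally, in~\ref{item:26} the $\L1$ part of the Lipschitz estimate is already contained in Lemma~\ref{lem:4} (since $\norma{\rho_o}_{\cBV{}(\reali;\reali^k)}=\norma{\rho_o}_{\W11(\reali;\reali^k)}\le\norma{\rho_o}_{\W21(\reali;\reali^k)}$), while for the derivative part I would bound $\norma{\partial^2_{sx}\rho^i(s)}_{\L1}$ from the non-conservative identity (via~\eqref{eq:45}--\eqref{eq:46} and conservation of mass) and integrate $\partial_x\rho^i(t,\cdot)-\partial_x\rho^i(\widehat t,\cdot)=\int_{\widehat t}^{t}\partial^2_{sx}\rho^i(s,\cdot)\,\d s$ in $\L1$.

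The step I expect to demand the most care is the one behind the first two paragraphs: checking, from $\rho\in\C0([0,T];\L1)$ and $V\in\C0([0,T];\mathcal V)$ alone, the joint continuity on $[0,T]\times\reali$ of $v^i$ together with its first three $x$-derivatives --- this is exactly what makes~\cite{zbMATH05197323} applicable and allows~\eqref{eq:13} to be differentiated up to the orders needed in~\ref{item:32} and~\ref{item:36}. Everything afterwards is a systematic computation on~\eqref{eq:13}, \eqref{eq:12} and the change of variables~\eqref{eq:59}.
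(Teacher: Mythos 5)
Your proposal is correct, and its skeleton coincides with the paper's: explicit representation~\eqref{eq:13} along the characteristics~\eqref{eq:11}, the observation that $\norma{\rho(t)}_{\L1}=\norma{\rho_o}_{\L1}$ makes $v^i$ and its first three $x$-derivatives bounded (and jointly continuous) by a constant $Q$ depending only on $\norma{\eta}_{\W3\infty}$, $\norma{V}_{\C0([0,T];\mathcal V)}$, $\norma{\rho_o}_{\L1}$, and then direct differentiation of~\eqref{eq:13} plus the change of variables~\eqref{eq:59} for all the $\L1$ and $\L\infty$ bounds --- this is exactly Claims~1--4, 6, 7 of the paper. Where you genuinely diverge is~\ref{item:26}: the paper reduces to the $\L1$ Lipschitz continuity of $\partial_x\rho$ via~\ref{item:25} (as you do) but then estimates $\norma{\partial_x\rho(t)-\partial_x\rho(\widehat t)}_{\L1}$ by comparing the two explicit formulas term by term, which requires the $\tv$-based change-of-point estimate of~\cite[Lemma~4.1]{OperaPrima} (hence $\tv\left((\rho_o^i)'\right)$, finite since $\rho_o\in\W21\cap\C2$) together with Lipschitz-in-$t_o$ bounds on $X^i$ and $\mathcal E^i$; you instead bound $\norma{\partial^2_{tx}\rho(s)}_{\L1}$ uniformly from the differentiated equation $\partial^2_{tx}\rho^i=-v^i\partial^2_{xx}\rho^i-2\partial_xv^i\,\partial_x\rho^i-\rho^i\partial^2_{xx}v^i$ via~\eqref{eq:45}--\eqref{eq:46} and conservation of mass, and integrate in time. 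Your route is shorter, avoids the external lemma, and yields the same constant dependence on $\norma{\rho_o}_{\W21}$; its only extra burden is justifying the pointwise mixed-derivative identity, which your $\C1$ regularity of the explicit formula for $\partial_x\rho$ (or a Schwarz-type argument, since $\partial_x\partial_t\rho$ is continuous) covers. Likewise, for~\ref{item:47} you read $\modulo{\rho^i(t,x)}\le\norma{\rho_o^i}_{\L\infty}e^{QT}$ directly off~\eqref{eq:13}, which in fact matches the dependence on $\norma{\rho_o}_{\L1},\norma{\rho_o}_{\L\infty}$ stated in~\eqref{eq:40} more precisely than the paper's appeal to the embedding $\W11\hookrightarrow\L\infty$. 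The delicate point you single out --- joint continuity of $v^i$ and its $x$-derivatives from $\rho\in\C0([0,T];\L1)$ and $V\in\C0([0,T];\mathcal V)$, needed to invoke~\cite{zbMATH05197323} and to differentiate~\eqref{eq:13} --- is exactly the content of the paper's Claims~1 and~2, so no gap there.
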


\begin{proof}
  Fix $i \in \{1, \ldots, k\}$. Referring to~\eqref{eq:28}, introduce
  $v^i(t,x) \coloneq V^i\left(t,x,(\rho(t) * \eta^i)(x)\right)$ and
  use the notation~\eqref{eq:13}. We split the proof in different
  parts.

  \paragraph{Claim~1. The maps
    $(t,x) \to v(t,x), \partial_x v(t,x) \in \C0\left([0,T] \times \reali;
    \reali^k\right)$. Moreover, for all $t \in [0,T]$:
    $x \mapsto v(t,x) \in (\C3 \cap \W3\infty) (\reali;\reali^k)$ and
    $v \in \L\infty \left([0,T]; \W3\infty (\reali;\reali^k)\right)$.}

  The continuity of the maps $(t,x) \to v(t,x), \partial_x v(t,x)$ is
  a consequence of the assumptions on $V, \eta$ and the hypothesis
  $\rho \in \C0([0,T]; \L1(\reali; \reali^k)$. Similarly, for all
  $t \in [0,T]$, $x \mapsto v (t,x)$ is of class $\C3$ and allows the
  following estimates:
  \begin{align*}
    \modulo{v^i (t,x)}
    \ghost{
    & \leq
      \modulo{V^i (t,x,0)} +
      \norma{\nabla_q V^i}_{\L\infty ([0,t]\times\reali\times\reali^k;\reali)} \,
      \norma{\eta^i}_{\L\infty (\reali;\reali)} \,
      \norma{\rho (t)}_{\L1 (\reali;\reali^k)}
    \\
    }
    & \leq
      \norma{V}_{\C0([0,t];\mathcal{V})}
      \left(
      1 +
      \norma{\eta^i}_{\L\infty (\reali;\reali)} \,
      \norma{\rho_o}_{\L1 (\reali;\reali^k)}
      \right)
    \\
    \modulo{\partial_x v^i(t,x)}
    \ghost{& =
             \modulo{\partial_x V^i(t,x, \left(\rho(t)*\eta^i)(x)\right) +
             \nabla_q V^i\left(t,x, (\rho(t)*\eta^i)(x)\right)
             \left( \left(\rho(t)*(\eta^i)' \right)(x) \right)
             }
    \\}
    & \leq
      \norma{V}_{\C0([0,t];\mathcal{V})}
      \left(
      1 +
      \norma{(\eta^i)'}_{\L\infty (\reali;\reali)} \,
      \norma{\rho_o}_{\L1 (\reali;\reali^k)}
      \right)
    \\
    \modulo{\partial_{xx}^2 v^i(t,x)}
    \ghost{& =
             \big|
             \partial_{xx}^2 V^i\left(t,x,(\rho(t)*\eta^i)(x)\right) +
             2\nabla_q\partial_x V^i\left(t,x,(\rho(t)*\eta^i)(x)\right)
             \left( \left(\rho(t)*(\eta^i)' \right)(x) \right)
    \\
    & \quad
      + \partial_{\rho\rho}^2 V^i\left(t,x,(\rho(t)*\eta^i)(x)\right)
      \left(\left(\rho(t)*(\eta^i)' \right)(x)\right)^2
    \\
    & \quad
      + \nabla_q V^i\left(t,x,(\rho(t)*\eta^i)(x)\right)
      \left(\left( \rho(t)*(\eta^i)'' \right)(x)\right) \big|
    \\}
    & \leq
      \norma{V}_{\C0([0,t];\mathcal{V})} \!
      \left(
      1 {+}
      2 \norma{(\eta^i)'}_{\W1\infty (\reali;\reali)}
      \norma{\rho_o}_{\L1 (\reali;\reali^k)}
      {+}
      \norma{(\eta^i)'}_{\L\infty (\reali;\reali)}^2
      \norma{\rho_o}_{\L1 (\reali;\reali^k)}^2
      \right)
    \\
    \modulo{\partial_{xxx}^3 v^i(t,x)}
    \ghost{
    & =
      \big| \partial_{xxx}^3 V^i\left( t,x,\left(\rho(t)*\eta^i \right)(x)\right)
    \\
    & \quad
      + 3
      \partial_{xx\rho}^3 V^i\left( t,x,\left(\rho(t)*\eta^i \right)(x)\right)
      \left( \left( \rho(t)*(\eta^i)' \right)(x)\right)
    \\
    &\quad
      + 3\partial_{x\rho\rho}^3 V^i\left( t,x,\left(\rho(t)*\eta^i \right)(x)\right)
      \left( \left( \rho(t)*(\eta^i)' \right)(x)\right)^2
    \\
    &\quad
      + \partial_{x\rho}^2 V^i\left( t,x,\left(\rho(t)*\eta^i \right)(x)\right)
      \left( \left( \rho(t)*(\eta^i)'' \right)(x)\right)
    \\
    &\quad
      + \partial_{\rho\rho\rho}^3 V^i\left( t,x,\left(\rho(t)*\eta^i \right)(x)\right)
      \left( \left( \rho(t)*(\eta^i)' \right)(x)\right)^3
    \\
    &\quad
      + 3\partial_{\rho\rho}^2 V^i\left( t,x,\left(\rho(t)*\eta^i \right)(x)\right)
      \left( \left( \rho(t)*(\eta^i)' \right)(x)\right)
      \left( \left( \rho(t)*(\eta^i)'' \right)(x)\right)
    \\
    &\quad
      + \partial_{x\rho}^2
      V^i\left( t,x,\left(\rho(t)*\eta^i \right)(x)\right)
      \left( \left( \rho(t)*(\eta^i)'' \right)(x)\right)
    \\
    &\quad
      + \partial_{\rho} V^i\left( t,x,\left(\rho(t)*\eta^i \right)(x)\right)
      \left( \left( \rho(t)*(\eta^i)''' \right)(x)\right) \big|
    \\}
    & \leq
      \norma{V}_{\C0([0,t];\mathcal{V})}
      \big(
      1
      +
      3\norma{\eta^i}_{\W3\infty(\reali;\reali)}\,
      \norma{\rho_o}_{\L1(\reali;\reali^k)}
    \\
    &
      \qquad
      +
      3 \norma{(\eta^i)'}^2_{\W1\infty(\reali;\reali)}\,
      \norma{\rho_o}^2_{\L1(\reali;\reali^k)}
      +
      \norma{(\eta^i)'}^3_{\L\infty(\reali;\reali)} \,
      \norma{\rho_o}^3_{\L1(\reali;\reali^k)}
      \big) \,.
  \end{align*}
  Finally, we also obtain
  \begin{equation}
    \label{eq:16}
    \begin{array}{rcl}
      &
      & \norma{v}_{\L\infty([0,T];\W3\infty(\reali;\reali))}
        \leq
        Q \qquad \mbox{ where}
      \\
      Q
      & \coloneqq
      &  4 \, \norma{V}_{\C0([0,T];\mathcal{V})}
        \big(
        1
        +
        \norma{\eta}_{\W3\infty(\reali;\reali^k)} \,
        \norma{\rho_o}_{\L1(\reali;\reali^k)}
      \\
      &
      & \qquad\qquad
        +
        \norma{\eta'}^2_{\W1\infty(\reali;\reali^k)} \,
        \norma{\rho_o}^2_{\L1(\reali;\reali^k)}
        +
        \norma{\eta'}^3_{\L\infty(\reali;\reali^k)} \,
        \norma{\rho_o}^3_{\L1(\reali;\reali^k)}
        \big)
    \end{array}
  \end{equation}
  completing the proof Claim~1.

  \paragraph{
  Claim~2: The following regularities hold:
    \begin{displaymath}
      \begin{array}{r@{\,}c@{\,}l@{\qquad\qquad}r@{\,}c@{\,}l@{\,}c@{\,}l}
        \forall \, t
        & \in
        & [0,T]
        & [(t_o,x_o)
        & \mapsto
        & X (t;t_o,x_o)]
        & \in
        & \C1 \left([0,T]\times\reali;\reali^k\right)
        \\
        \forall \, t
        & \in
        & [0,T]
        & [(t_o,x_o)
        & \mapsto
        & \mathcal{E} (t,t_o,x_o)]
        & \in
        & \C1 \left([0,T]\times\reali;\reali^k\right)
      \end{array}
    \end{displaymath}
    and moreover, for $i=1, \ldots, k$, $(t,t_o,x_o) \in [0,T]\times[0,T]\times\reali$,
            \begin{equation}
      \label{eq:18}
      \modulo{\mathcal{E}^i(t,t_o,x_o)}
 \leq e^{QT} \,.
\end{equation}}

Due to Claim~1, the continuity of $X$ is an immediate consequence of
$V \in \C0\left([0,T];\mathcal{V}\right)$ and
$\rho \in \C0 \left([0,T];\L1 (\reali; \reali^k)\right)$. The
continuity of $(t_o,x_o) \mapsto \mathcal{E} (t,t_o,x_o)$ follows
from~\eqref{eq:12} and the $\C1$ regularity of $X$ holds, due
to~\eqref{eq:17}. An application of~\eqref{eq:17} yields that
\begin{eqnarray}
  \nonumber
  \!\!\!\!\!\!
  \partial_{x_o} \mathcal{E}^i(t,t_o,x_o)
  & =
  & \partial_{x_o}
    \int_{t_o}^t \partial_x v^i\left(s, X^i(s;t_o,x_o)\right) \d{s}
  \\
  \ghost{
  \nonumber
  & =
  & \int_{t_o}^t \partial_{xx}^2 v^i\left(s, X^i(s;t_o,x_o)\right) \partial_{x_o} X^i(s;t_o,x_o) \d{s}
  \\}
  \nonumber
  & =
  &\int_{t_o}^t \partial_{xx}^2 v^i \left(s, X^i(s;t_o,x_o)\right) \,
    \mathcal{E}^i(s;t_o,x_o) \d{s}
  \\
  \nonumber
  \!\!\!\!\!\!
  \partial_{t_o} \mathcal{E}^i(t,t_o,x_o)
  & =
  & \partial_{t_o} \int_{t_o}^t \partial_x v^i\left(s, X^i(s;t_o,x_o)\right) \d{s}
  \\
  \ghost{
  \nonumber
  & =
  & -\partial_{x}v^i(t_o, x_o) + \int_{t_o}^t \partial_{xx}^2
    v^i\left(s, X^i(s;t_o,x_o)\right)\partial_{t_o} X^i(s;t_o,x_o) \d{s}
  \\}
  \label{eq:60}
  & =
  & {-}\partial_x v^i(t_o, x_o) {-} \!
    \int_{t_o}^t \! \partial_{xx}^2 v^i\left(s, X^i(s;t_o,x_o)\right)
    \mathcal{E}^i(s;t_o,x_o) \, v^i(t_o,x_o) \d{s}\qquad
\end{eqnarray}
proving the $\C1$ regularity of
$(t_o,x_o) \mapsto \mathcal{E} (t,t_o,x_o)$.

Moreover, referring to~\eqref{eq:16}, the bound~\eqref{eq:18} follows,
since for all $(t,t_o,x_o) \in [0,T] \times [0,T] \times\reali$,
$\modulo{\mathcal{E}^i(t,t_o,x_o)} \leq e^{Q \modulo{t-t_o}}$.

\paragraph{Claim~3: \ref{item:32} holds.}  One obtains that
$\rho \in \C0\left([0,T]\times \reali;\reali^k\right)$ because of the continuity
of $\rho_o$ and Claim~2.  Compute now
\begin{eqnarray}
  \label{eq:27}
  \partial_x \rho^i(t,x)
  \ghost{
  & =
  & (\rho_o^i)'\left(X^i(0;t,x)\right) \, \partial_x X^i(0;t,x)
    \, \mathcal{E}^i(0,t,x)
    + \rho_o^i\left(X^i(0;t,x)\right) \, \partial_x \mathcal{E}^i(0,t,x)
  \\}
  & =
  & (\rho_o^i)'\left(X^i(0;t,x)\right) \, \mathcal{E}^i(0,t,x)^2
  \\
  \nonumber
  &
  & \qquad + \rho_o^i\left(X^i(0;t,x)\right)
    \int_{t}^0 \partial_{xx}^2 v^i\left(s, X^i(s;t,x)\right) \,
    \mathcal{E}^i(s,t,x) \d{s}
  \\
  \nonumber
  \partial_t \rho^i(t,x)
  \ghost{
  & =
  & (\rho_o^i)'\left(X^i(0;t,x)\right) \, \partial_t X^i(0;t,x) \mathcal{E}^i(0,t,x)
    + \rho_o^i\left(X^i(0;t,x)\right) \, \partial_t \mathcal{E}^i(0,t,x)
  \\}
  \nonumber
  & =
  & (\rho_o^i)'\left(X^i(0;t,x)\right)
    \left( - \mathcal{E}^i(0,t,x) \, v^i(t,x) \right) \mathcal{E}^i(0,t,x)
  \\
  \nonumber
  &
  & + \rho_o^i\left(X^i(0;t,x)\right)
    \left( -\partial_x v^i(t, x)
    - \int_t^0 \partial_{xx}^2 v^i\left(s, X^i(s;t,x)\right)
    \mathcal{E}^i(s; t, x) \, v^i(t,x) \d{s} \right) ,
\end{eqnarray}
proving that $\rho \in \C1\left([0,T] \times \reali;\reali^k\right)$ due to
$\rho_o \in \C1(\reali;\reali^k)$, Claim~1 and Claim~2.

\paragraph{Claim~4: \ref{item:39} holds.} \!\!\!\!\!\! Use the change
of variable~\eqref{eq:59} and~\eqref{eq:17} to obtain for
a.e.~$t \in [0,T]$
\begin{eqnarray*}
  \norma{\partial_x\rho^i(t)}_{\L1 (\reali;\reali)}
  & \leq
  &  \int_\reali \modulo{(\rho_o^i)'\left(X^i(0;t,x)\right) \,
    \mathcal{E}^i(0,t,x)^2} \d{x}
  \\
  &
  &  +  \int_\reali \modulo{ \rho_o^i\left(X^i(0;t,x)\right)
    \int_0^t \partial_{xx}^2 v^i\left(s, X^i(s;t,x)\right)
    \mathcal{E}^i(s,t,x) \d{s}} \d{x}
  \\
  & \leq
  &  \int_\reali\modulo{ (\rho_o^i)'(\xi)
    \mathcal{E}^i\left(0,t,X^i(t;0,\xi)\right)}  \d\xi
    +
    \int_\reali \modulo{ \rho_o^i(\xi)} Q\,t \, e^{Qt}
    \mathcal{E}^i(0;t,\xi) \d\xi
  \\
     & \leq
     & \norma{\rho_o^i}_{\W11(\reali;\reali)} e^{Qt}
        \left( 1 + Q\, t \, e^{Qt} \right)
\end{eqnarray*}
proving~\eqref{eq:45}. Since
\begin{align*}
  \partial_{xx}^2 \rho^i(t,x) =
  & (\rho_o^i)''\left(X^i(0;t,x)\right) \, \mathcal{E}^i(0,t,x)^3
  \\
  & + 2 (\rho_o^i)' \left(X(0;t,x)\right) \, \mathcal{E}^i(0,t,x) \partial_x \mathcal{E}^i(0,t,x)
  \\
  & + (\rho_o^i)'\left(X^i(0;t,x)\right) \mathcal{E}^i(0,t,x) \int_t^0\partial_{xx}^2 v^i\left(s, X(s;t,x)\right) \mathcal{E}^i(s,t,x) \d{s}
  \\
  & + \rho_o^i\left(X^i(0;t,x)\right) \int_t^0 \partial_{xxx}^3 v^i\left(s, X^i(s;t,x)\right)\mathcal{E}^i(s,t,x)^2 \d{s}
  \\
  & + \rho_o^i\left(X^i(0;t,x)\right) \int_t^0 \partial_{xx}^2 v^i\left(s, X^i(s;t,x)\right)\partial_x \mathcal{E}^i(s,t,x) \d{s} \,,
\end{align*}
computations entirely similar to the ones used above ensure that
\begin{displaymath}
        \norma{\partial^2_{xx} \rho(t)}_{\L1(\reali; \reali^k)}
  \leq
  \norma{\rho_o^i}_{\W21(\reali;\reali)}e^{2Qt} \left(
    1
    +  4e^{Qt}Qt   + Q^2t^2
  \right).
\end{displaymath}
proving~\eqref{eq:46}. The bounds~\eqref{eq:54} and~\eqref{eq:55}
readily follow.

\paragraph{Claim~5:~\ref{item:26} and~\ref{item:47} hold.} Thanks to~\ref{item:25} in
Lemma~\ref{lem:4}, it is sufficient to prove the $\L1$ local Lipschitz
continuity in time of $\partial_x \rho$. To this aim, start
from~\eqref{eq:27} and, using~\eqref{eq:18}, compute:
\begin{flalign*}
  & \norma{\partial_x \rho^i (t) - \partial_x\rho^i (\widehat t)}_{\L1 (\reali;\reali)}
  \\
  \leq
  & \int_{\reali} \!
    \modulo{
    (\rho_o^i)'\left(X^i (0;t,x)\right) \mathcal{E}^i (0,t,x)^2
    {-}
    (\rho_o^i)'\left(X^i (0;\widehat t,x)\right) \mathcal{E}^i (0,\widehat t,x)^2
    } \d{x}
    \!\!\!\!\!\!\!\!\!\!\!\!\!\!\!\!\!\!\!\!\!
  \\
  & +
    \int_{\reali}
    \Big|
    \rho_o^i\left(X^i (0;t,x)\right)
    \int_0^t \partial^2_{xx}v^i\left(s, X^i (s;t,x)\right)
    \mathcal{E}^i (s,t,x) \d{s}
  \\
  &  -
    \rho_o^i\left(X^i (0;\widehat t,x)\right)
    \int_0^{\widehat t} \partial^2_{xx}v^i\left(s, X^i (s;\widehat t,x)\right)
    \mathcal{E}^i (s,\widehat t,x) \d{s} \Big| \d{x}
  \\
  \ghost{
  \leq
  & e^{2QT} \int_{\reali}
    \modulo{
    (\rho_o^i)'\left(X^i (0;t,x)\right)
    -
    (\rho_o^i)'\left(X^i (0;\widehat t,x)\right)
    } \d{x}
  \\
  & + \int_{\reali}
    \modulo{(\rho_o^i)'\left(X^i (0;\widehat t,x)\right)}
    \modulo{\mathcal{E}^i (0,t,x)^2
    -
    \mathcal{E}^i (0,\widehat t,x)^2} \d{x}
  \\
  & + \int_{\reali}
    \modulo{
    \rho_o^i\left(X^i (0;t,x)\right) - \rho_o^i\left(X^i (0;\widehat t,x)\right)
    }
    \int_t^0
    \modulo{\partial^2_{xx} v^i\left(s,X^i (s;t,x)\right)}
    \mathcal{E}^i (s,t,x)
    \d{x}
  \\
  &  +
    \int_{\reali} \modulo{\rho_o^i\left(X^i (0; \widehat t,x)\right)
    \int_{\widehat t}^t \partial^2_{xx} v^i \left(s,X^i (s;t,x)\right)
    \mathcal{E}^i (s,t,x) \d{s} } \d{x}
  \\
  &  + \int_{\reali} \modulo{\rho_o^i\left(X^i (0; \widehat t,x)\right)}
    \int_0^{\widehat t} \modulo{
    \partial^2_{xx} v^i \left(s,X^i (s;t,x)\right)
    {-}
    \partial^2_{xx} v^i \left(s,X^i (s;\widehat t,x)\right)}
    \mathcal{E}^i (s,t,x) \d{s}  \d{x}
  \\
  &  +
    \int_{\reali} \modulo{\rho_o^i\left(X^i (0; \widehat t,x)\right)}
    \int_0^{\widehat t} \modulo{\partial^2_{xx} v^i \left(s,X^i (s;t,x)\right)}
    \modulo{ \mathcal{E}^i (s,t,x) - \mathcal{E}^i (s,\widehat t,x) \d{s} }
    \d{x}
  \\}
  \leq
  &  2e^{QT} \tv\left((\rho_o^i)'\right)
    \sup_{x\in \reali} \modulo{X^i (0;t,x) - X^i (0;\widehat t,x)}
  & [\mbox{By~\cite[Lemma~4.1]{OperaPrima}}]
  \\
  & + 2 e^{2QT}
    \norma{ (\rho_o^i)'}_{\L1 (\reali;\reali)}
    \; \sup_{x \in \reali}
    \modulo{\mathcal{E}^i (0,t,x) - \mathcal{E}^i (0, \widehat t,x)}
  & [\mbox{By~\eqref{eq:59}, \eqref{eq:17}}]
  \\
  & +
    2\, Q\, T \, e^{QT} \tv\left((\rho_o^i)'\right)
    \sup_{x\in \reali} \modulo{X^i (0;t,x) - X^i (0;\widehat t,x)}
  & [\mbox{By~\cite[Lemma~4.1]{OperaPrima}, \eqref{eq:16}}]
  \\
  & + Q\, e^{2QT} \norma{ \rho_o^i}_{\L1 (\reali;\reali)}
    \modulo{t-\widehat t}
  & [\mbox{By~\eqref{eq:59}, \eqref{eq:17}}]
  \\
  & + Q e^{2QT}\norma{\rho_o^i}_{\L1 (\reali;\reali)}
    \int_0^{\widehat t} \sup_{x \in \reali}
    \modulo{X^i (s;t,x) - X^i (s;\widehat t, x)} \d{s}
  & [\mbox{By~\eqref{eq:59}, \eqref{eq:17}, \eqref{eq:16}}]
  \\
  & + Q e^{QT}\norma{\rho_o^i}_{\L1 (\reali;\reali)}
    \int_0^{\widehat t}
    \sup_{x \in \reali}
    \modulo{\mathcal{E}^i(s,t,x) - \mathcal{E}^i (s,\widehat t, x)} \d{s}\,.
  & [\mbox{By~\eqref{eq:59}, \eqref{eq:17}, \eqref{eq:16}}]
\end{flalign*}
To complete the above estimate, note that for all $s \in [0,T]$ and
for all $x \in \reali$,
\begin{flalign*}
  \modulo{X^i (s;t,x) - X^i (s;\widehat t, x)}
  \leq
  & Q \, e^{QT} \, \modulo{t-\widehat t} \,;
  & [\mbox{By~\eqref{eq:11}, \eqref{eq:17}
    and~\eqref{eq:16}}]
  \\
  \modulo{\mathcal{E}^i(s,t,x) - \mathcal{E}^i (s,\widehat t, x)}
  \leq
  & \left( Q + Q^2 T e^{QT}\right) \, \modulo{t - \widehat t} \,.
  & [\mbox{By~\eqref{eq:16}, \eqref{eq:18} and~\eqref{eq:60}}]
\end{flalign*}
and~\ref{item:26} follows. The standard embedding
$\W11 (\reali; \reali^k) \hookrightarrow \L\infty (\reali; \reali^k)$
allows to prove~\ref{item:47}.

\paragraph{Claim~6: \ref{item:36} holds.} The computations in Claim~4
show that
$\partial_{xx}^2\rho \in \C0\left([0,T] \times \reali;\reali^k\right)$
due to $\rho_o \in \C2(\reali;\reali^k)$, Claim~1 and~Claim~2.

\paragraph{Claim~7: \ref{item:38} holds. }
For $(t,x) \in [0,T]\times\reali$, by the computations in Claim~3,
~\eqref{eq:16} and~\eqref{eq:18}
   \begin{eqnarray}
     \label{eq:41}
     \modulo{\partial_x\rho^i(t,x)}
     & \leq
     & \norma{\rho_o^i}_{\W1\infty(\reali;\reali)} e^{2QT}
     \\
     \nonumber
     \modulo{\partial_t\rho^i(t,x)}
     & \leq
     & \norma{(\rho_o^i)'}_{\L\infty(\reali;\reali)} e^{2QT}Q + \norma{(\rho_o^i)}_{\L\infty(\reali;\reali)} \left(Q + Q^2e^{QT}T \right)
     \\
     \nonumber
     \modulo{\partial_{xx}^2\rho^i(t,x)}
     & \leq
     & \norma{\rho_o^i}_{\W2\infty(\reali; \reali)} \left(
     e^{2QT} + 4e^{QT}QT + Q^2T^2
     \right) e^{QT}
     \\\nonumber
     \modulo{\partial_{tx}^2\rho^i(t,x)}
     & \leq
     & \modulo{ ((\rho_o^i)'')\left(X^i(0;t,x)\right)\partial_t X^i(0;t,x)\mathcal{E}^i(0,t,x)^2}
     \\\nonumber
     &
     & + \modulo{(\rho_o^i)'\left(X^i(0;t,x)\right)2\mathcal{E}^i(0,t,x)\partial_t \mathcal{E}^i(0,t,x)}
     \\\nonumber
     &
     & +\modulo{ (\rho_o^i)'\left(X^i(0;t,x)\right)\partial_t X^i(0;t,x) \int_t^0 \partial_{xx}^2 v^i\left(s, X^i(s;t,x)\right)\mathcal{E}^i(s,t,x) \d{s}}
     \\\nonumber
     &
     &+ \modulo{ \rho_o^i\left(X^i(0;t,x)\right) \partial_{xx}^2v^i(t,x)}
     \\\nonumber
     &
     & + \modulo{ \rho_o^i\left(X^i(0;t,x)\right)
       \int_t^0 \partial_{xxx}^3 v^i\left(s, X^i(s;t,x)\right)\partial_t X^i(s;t,x) \mathcal{E}^i(s,t,x) \d{s}
       }
     \\
     \nonumber
     &
     & + \modulo{ \rho_o^i\left(X^i(0;t,x)\right) \int_t^0  \partial_{xx}^2 v^i\left(s, X^i(s;t,x)\right)\partial_t \mathcal{E}^i(s,t,x) \d{s} }
     \\
     \ghost{
     \nonumber
     & \leq
     & \norma{(\rho_o^i)''}_{\L\infty(\reali;\reali)} \modulo{\mathcal{E}^i(0,t,x)v^i(t,x)} e^{2QT}
     \\\nonumber
     &
     & +
       2\norma{(\rho_o^i)'}_{\L\infty(\reali;\reali)} e^{QT}
       \modulo{
       \partial_x v^i(t,x)
       +
       \int_t^0 \partial_{xx}^2 v^i\left(s, X(s;t,x)\right)
       \mathcal{E}^i(s,t,x)v^i(t,x)\d{s}}
     \\\nonumber
     &
     &+ \norma{(\rho_o^i)'}_{\L\infty(\reali;\reali)}\modulo{\mathcal{E}^i(0,t,x)v^i(t,x)}Qe^{QT}T
     \\\nonumber
     &
     &+ \norma{\rho_o^i}_{\L\infty(\reali;\reali)} Q
     \\\nonumber
     &
     &+ \norma{\rho_o^i}_{\L\infty(\reali;\reali)} TQe^{QT}e^{QT}Q
     \\\nonumber
     &
     &+ \norma{\rho_o^i}_{\L\infty(\reali;\reali)}TQ (
       Q+ Qe^{QT}QT
       )
     \\}
     \nonumber
     &\leq
     & \norma{\rho_o^i}_{\W2\infty(\reali;\reali)}Q \left( 1 + TQ + e^{QT} \left( 2 + T^2Q^2
       \right) + e^{2QT} 4QT +  e^{3TQ} \right) \,,
   \end{eqnarray}
   completing the proof of Claim~7.
 \end{proof}

 \begin{lemma}[\emph{A priori} estimate on $V$]
   \label{lem:3}
   Assume that $\eta \in \W3\infty(\reali;\reali^k)$, $v_{NL}$
   satisfies~\ref{item:17} and
   $r \in \C0\left([0,T];\L1(\reali;\reali)\right)$.  Define for
   $i=1, \ldots, k$ and for
   $(t,x,q) \in [0,T]\times\reali\times\reali^k$
   \begin{displaymath}
     V^i(t,x,q) \coloneq v_{NL}^i\left( \Sigma q + (r(t)*\eta^i)(x)\right) \,.
   \end{displaymath}
   Then, $V$ belongs to $\C0\left([0,T];\mathcal{V}\right)$ and for
   all $t \in [0,T]$
   \begin{align}\label{eq:24}
     \norma{V}_{\C0([0,t];\mathcal{V})} \leq \mathcal{C}\left(
     \norma{\eta}_{\W3\infty(\reali; \reali^k)},
     \norma{v_{NL}}_{\W3 \infty(\reali; \reali^k)} ,
     \norma{r}_{\C0([0,t]; \L1(\reali; \reali))}
     \right).
   \end{align}
 \end{lemma}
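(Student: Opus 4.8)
The plan is to reduce everything to the chain rule, Young's convolution inequality, and the assumed continuity of $r$ in $\L1$. Fix $i \in \{1,\dots,k\}$ and set $a^i(t,x) \coloneqq \bigl(r(t)*\eta^i\bigr)(x)$, so that $V^i(t,x,q) = v_{NL}^i\bigl(\Sigma q + a^i(t,x)\bigr)$. Since $\eta^i \in \W3\infty(\reali;\reali)$ and $r(t) \in \L1(\reali;\reali)$, the function $x \mapsto a^i(t,x)$ is of class $\C3$ with $\partial_x^j a^i(t,\cdot) = r(t)*(\eta^i)^{(j)}$ for $j = 0,1,2,3$; by \cite[Theorem~4.15]{zbMATH05633610} this yields $\norma{\partial_x^j a^i(t)}_{\L\infty(\reali;\reali)} \le \norma{r(t)}_{\L1(\reali;\reali)}\,\norma{\eta^i}_{\W3\infty(\reali;\reali)}$, and, replacing $r(t)$ by $r(t)-r(s)$, also $\norma{a^i(t)-a^i(s)}_{\W3\infty(\reali;\reali)} \le \norma{r(t)-r(s)}_{\L1(\reali;\reali)}\,\norma{\eta^i}_{\W3\infty(\reali;\reali)}$, so $t \mapsto a^i(t)$ is (Lipschitz) continuous from $[0,T]$ into $\W3\infty(\reali;\reali)$ because $r \in \C0\bigl([0,T];\L1(\reali;\reali)\bigr)$.

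Next I would establish that $V(t,\cdot,\cdot) \in \mathcal{V}$ for each $t$ together with the bound~\eqref{eq:24}. Since $q \mapsto \Sigma q$ is affine and $v_{NL}^i \in \C3$, the composition $(x,q) \mapsto V^i(t,x,q)$ is of class $\C3$ on $\reali\times\reali^k$. Writing $z = z(t,x,q) \coloneqq \Sigma q + a^i(t,x)$, repeated application of the chain and product rules shows that every partial derivative $\partial^\alpha_{(x,q)} V^i$ with $1 \le |\alpha| \le 3$ is a finite sum of terms $(v_{NL}^i)^{(m)}(z)\,\prod_\ell \partial_x^{\beta_\ell} a^i(t,x)$ with $m \le |\alpha| \le 3$, at most three factors $\partial_x^{\beta_\ell} a^i$, and each $\beta_\ell \le 3$. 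By the bounds just established, each such term is bounded in $\L\infty(\reali\times\reali^k;\reali)$ by $\norma{v_{NL}^i}_{\W3\infty(\reali;\reali)}$ times a monomial of degree at most three in $\norma{r(t)}_{\L1(\reali;\reali)}\,\norma{\eta^i}_{\W3\infty(\reali;\reali)}$. Since moreover $V^i(t,x,0) = v_{NL}^i\bigl(a^i(t,x)\bigr)$ gives $\norma{V(t,\cdot,0)}_{\L\infty(\reali;\reali^k)} \le \norma{v_{NL}}_{\L\infty(\reali;\reali^k)}$, summing the finitely many derivatives over $i = 1,\dots,k$ and using $\norma{r(t)}_{\L1} \le \norma{r}_{\C0([0,t];\L1)}$ proves $V(t,\cdot,\cdot)\in\mathcal{V}$ and the estimate~\eqref{eq:24}, with $\mathcal{C}$ a polynomial in $\norma{\eta}_{\W3\infty}\,\norma{r}_{\C0([0,t];\L1)}$ carrying $\norma{v_{NL}}_{\W3\infty}$ as a multiplicative factor.

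It remains to prove the continuity $t \mapsto V(t)$ into $\mathcal{V}$. Fix $t$ and a sequence $t_n \to t$. Using the expansion above, for each $\alpha$ with $|\alpha| \le 3$ (with the convention that $|\alpha|=0$ refers to $V(\cdot,0)$) the difference $\partial^\alpha_{(x,q)} V^i(t_n,x,q) - \partial^\alpha_{(x,q)} V^i(t,x,q)$ is a finite sum of differences of terms of the above form; I would control each by adding and subtracting, exploiting that all factors are bounded uniformly in $n$, $x$, $q$ and that $\partial_x^{\beta_\ell} a^i(t_n,\cdot) \to \partial_x^{\beta_\ell} a^i(t,\cdot)$ uniformly (both from the first paragraph), and that $\sup_{x,q}\modulo{z(t_n,x,q) - z(t,x,q)} = \norma{a^i(t_n)-a^i(t)}_{\L\infty(\reali;\reali)} \le \norma{r(t_n)-r(t)}_{\L1(\reali;\reali)}\,\norma{\eta^i}_{\L\infty(\reali;\reali)} \to 0$; since $v_{NL}^i$, $(v_{NL}^i)'$, $(v_{NL}^i)''$ are Lipschitz (their derivatives being bounded by~\ref{item:17}) and $(v_{NL}^i)'''$ is (uniformly) continuous, $\sup_{x,q}\modulo{(v_{NL}^i)^{(m)}\bigl(z(t_n,x,q)\bigr) - (v_{NL}^i)^{(m)}\bigl(z(t,x,q)\bigr)} \to 0$ for $m \le 3$. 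Hence $\norma{\partial^\alpha V^i(t_n) - \partial^\alpha V^i(t)}_{\L\infty} \to 0$ for every such $\alpha$ and $i$, and summing gives $\norma{V(t_n) - V(t)}_{\mathcal{V}} \to 0$, i.e.\ $V \in \C0\bigl([0,T];\mathcal{V}\bigr)$.

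The computations are routine; the step demanding care is the last one, namely passing the uniform limit through the top-order derivative $(v_{NL}^i)'''$, which is where the full strength $v_{NL} \in \C3$ of hypothesis~\ref{item:17} (and not merely the $\W3\infty$ bound) is used, together with the bookkeeping needed to cast the chain-rule expansion into the compact form of~\eqref{eq:24}.
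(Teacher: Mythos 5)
Your proposal is correct and takes essentially the same route as the paper's proof: bound $V^i(t,\cdot,0)$ and all derivatives of $V^i$ up to third order via the chain rule together with Young's inequality $\norma{r(t)*(\eta^i)^{(j)}}_{\L\infty}\leq\norma{r(t)}_{\L1}\norma{\eta^i}_{\W3\infty}$, then obtain continuity in $t$ from $\norma{\left(r(t)-r(s)\right)*\eta^i}_{\W3\infty(\reali;\reali)}\leq\norma{r(t)-r(s)}_{\L1(\reali;\reali)}\norma{\eta^i}_{\W3\infty(\reali;\reali)}$ and the regularity of $v_{NL}$; the only difference is organizational (a generic chain-rule expansion instead of the paper's term-by-term list). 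One caveat: your parenthetical claim that $(v_{NL}^i)'''$ is \emph{uniformly} continuous is not literally granted by~\ref{item:17} (a $\C3\cap\W3\infty$ function need not have a uniformly continuous third derivative), and this is exactly the step needed to pass the top-order term $\partial^3_{q_jq_lq_m}V^i=(v_{NL}^i)'''\left(\Sigma q+(r(t)*\eta^i)(x)\right)$ to the limit in $\L\infty$ --- the paper itself elides the same point with ``follows by the regularity of $v_{NL}$'', so your argument is at the same level of rigor, but the justification as written overstates the hypothesis.
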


 \begin{proof}
   Compute, for all $(t,x,q) \in [0,T]\times\reali\times\reali$ and $i,j,l,m=1, \ldots, k $
   \begin{eqnarray*}
     \modulo{V^i(t,x,0)}
     &\leq
     & \norma{v^i_{NL}}_{\L\infty(\reali;\reali)}
     \\
     \modulo{\partial_x V^i(t,x,q)}
     \ghost{
     & =
     & \modulo{(v^i_{NL})'(\Sigma q + (r(t)*\eta^i)(x))
       \, \partial_x(r(t)*\eta^i)(x)}
     \\
     }
     & \leq
     & \norma{(\eta^i)'}_{\L\infty(\reali;\reali)} \,
       \norma{(v^i_{NL})'}_{\L\infty(\reali;\reali)} \,
       \norma{r(t)}_{\L1(\reali;\reali)}
     \\
     \modulo{\partial_{q_j} V^i(t,x,q)}
     \ghost{
     & =
     & \modulo{(v^i_{NL})'(\Sigma q + (r(t)*\eta^i)(x))}
     \\
     }
     &\leq
     & \norma{(v_{NL}^i)'}_{\L\infty(\reali;\reali)}
     \\
     \modulo{\partial_{xx}^2V^i(t,x,q)}
     \ghost{
     & \leq
     & \modulo{(v^i_{NL})''(\Sigma q+(r(t)*\eta^i)(x))
       \left((r(t)*(\eta^i)')(x)\right)^2}
     \\
     &
     & + \modulo{ (v^i_{NL})'(\Sigma q + (r(t)*\eta^i(x)))
       \left( (r(t)*(\eta^i)'')(x)\right)}
     \\
     }
     & \leq
     & \norma{(\eta^i)'}_{\L\infty(\reali;\reali)}^2 \,
       \norma{(v^i_{NL})''}_{\L\infty(\reali;\reali)} \,
       \norma{r(t)}^2_{\L1(\reali;\reali)}
     \\
     &
     &+ \norma{(\eta^i)''}_{\L\infty(\reali;\reali)} \,
       \norma{(v^i_{NL})'}_{\L\infty(\reali;\reali)} \,
       \norma{r(t)}_{\L1(\reali;\reali)}
     \\
     \modulo{\partial_{xq_j}^2 V^i(t,x,q)}
     \ghost{
     & =
     & \modulo{(v^i_{NL})''\left(\Sigma q+(r(t)*\eta^i)(x)\right) \,
       (r(t)*(\eta^i)')(x)}
     \\
     }
     &\leq
     & \norma{(\eta^i)'}_{\L\infty(\reali;\reali)} \,
       \norma{(v^i_{NL})''}_{\L\infty(\reali;\reali)} \,
       \norma{r(t)}_{\L1(\reali;\reali)}
     \\
     \modulo{\partial_{q_jq_l}^2 V^i(t,x,q)}
     \ghost{
     & =
     & \modulo{(v^i_{NL})''(\Sigma q + (r(t)*\eta^i)(x))}
     \\
     }
     & \leq
     & \norma{(v^i_{NL})''}_{\L\infty(\reali;\reali)}
     \\
     \modulo{\partial_{xxx}^3V^i(t,x,q)}
     \ghost{
     & \leq
     & \modulo{(v^i_{NL})'''(\Sigma q + \left(r(t)*\eta^i\right)(x))
       \left( (r(t)*(\eta^i)')(x)\right)^3}
     \\
     &
     & + \modulo{
       3 (v^i_{NL})''(\Sigma q + \left(r(t)*\eta^i\right)(x))
       \left( (r(t)*(\eta^i)')(x)\right)
       \left( (r(t)*(\eta^i)'')(x)\right)
       }
     \\
     &
     & + \modulo{
       (v^i_{NL})'(\Sigma q + \left(r(t)*\eta^i\right)(x))
       \left(r(t)*(\eta^i)''')(x)\right)
       }
     \\
     }
     &\leq
     & \norma{(\eta^i)'}_{\L\infty(\reali;\reali)}^3 \,
       \norma{(v^i_{NL})'''}_{\L\infty(\reali;\reali)} \,
       \norma{r(t)}_{\L1(\reali;\reali)}^3\,
     \\
     &
     & +
       3\norma{(\eta^i)'}_{\L\infty(\reali;\reali)}\,
       \norma{(\eta^i)''}_{\L\infty(\reali;\reali)} \,
       \norma{(v^i_{NL})''}_{\L\infty(\reali;\reali)} \,
       \norma{r(t)}_{\L1(\reali;\reali)}^2
     \\
     &
     & +
       \norma{(\eta^i)'''}_{\L\infty(\reali;\reali)}
       \norma{(v^i_{NL})'}_{\L\infty(\reali;\reali)}\,
       \norma{r(t)}_{\L1(\reali;\reali)}\,
     \\
     \modulo{\partial_{xxq_j}^3  V^i(t,x,q)}
     \ghost{
     & \leq
     & \modulo{(v^i_{NL})'''(\Sigma q+(r(t)*\eta^i)(x))
       \left((r(t)*(\eta^i)')(x)\right)^2}
     \\
     &
     & + \modulo{ (v^i_{NL})''
       \left(\Sigma q + \left(r(t)*\eta^i(x)\right)\right)
       \left( (r(t)*(\eta^i)'')(x)\right)}
     \\
     }
     & \leq
     & \norma{(\eta^i)'}_{\L\infty(\reali;\reali)}^2 \,
       \norma{(v^i_{NL})'''}_{\L\infty(\reali;\reali)} \,
       \norma{r(t)}^2_{\L1(\reali;\reali)}
     \\
     &
     &+ \norma{(\eta^i)''}_{\L\infty(\reali;\reali)} \,
       \norma{(v^i_{NL})''}_{\L\infty(\reali;\reali)} \,
       \norma{r(t)}_{\L1(\reali;\reali)}
     \\
     \modulo{\partial_{xq_jq_l}^3 V^i(t,x,q)}
     \ghost{
     & =
     & \modulo{(v^i_{NL})'''\left(\Sigma q+(r(t)*\eta^i)(x)\right) \,
       (r(t)*(\eta^i)')(x)}
     \\
     }
     &\leq
     & \norma{(\eta^i)'}_{\L\infty(\reali;\reali)} \,
       \norma{(v^i_{NL})'''}_{\L\infty(\reali;\reali)} \,
       \norma{r(t)}_{\L1(\reali;\reali)}
     \\
     \modulo{\partial_{q_jq_lq_m}^3 V^i(t,x,q)}
     \ghost{
     & =
     & \modulo{(v^i_{NL})''\left(\Sigma q + (r(t)*\eta^i)(x)\right)}
     \\
     }
     & \leq
     &  \norma{(v^i_{NL})'''}_{\L\infty(\reali;\reali)} \,,
   \end{eqnarray*}
hence for all $t\in[0,T]$, $V(t) \in \mathcal{V}$. Moreover, since
for all
   $t, t_o \in [0,T]$,
   \begin{displaymath}
     \norma{r (t) *\eta^i - r (t_o)*\eta^i}_{\W3\infty (\reali;\reali)}
     \leq
     \norma{r (t) - r (t_o)}_{\L1 (\reali;\reali)} \,
     \norma{\eta^i}_{\W3\infty (\reali;\reali)}
   \end{displaymath}
   we get that $V \in \C0 ([0,T];\mathcal{V})$.  The
   $\L\infty (\reali;\reali^k)$ continuity of
   $t \mapsto V (t,\cdot,0)$ and of
   $t \mapsto [\partial_x V (t) \quad \nabla_q V (t)]$ in
   $\W2\infty (\reali\times\reali^k; \reali^k \times \reali^{k \times
     k})$ then follows by the regularity of $v_{NL}$.
 \end{proof}

 \begin{lemma}[Stability in $\W11$]
   \label{lem:Linfty}
   Fix $\mathcal{R}>0$.  Let $\eta \in \W3\infty(\reali;\reali^k)$,
   $v_{NL}$ satisfy~\ref{item:17}. For all
   $r,\widehat r \in \C0 \left([0,T];\L1 (\reali;\reali)\right)$ with
   $\norma{r}_{\C0 \left([0,T];\L1 (\reali;\reali)\right)} \leq
   \mathcal{R}$,
   $\norma{\widehat r}_{\C0 \left([0,T];\L1 (\reali;\reali)\right)}
   \leq \mathcal{R}$, $\rho_o \in \cBV{1} (\reali; \reali^k)$ and
   $\widehat \rho_o \in \W11 (\reali;\reali^k)$, call
   $\rho,\widehat\rho$ the solutions, in the sense of
   Definition~\ref{def:solution}, to
   \begin{equation}
     \label{eq:23}
     \left\{
       \begin{array}{l}
         \partial_t \rho^i
         + \partial_x
         \left(\rho^i \; v_{NL}^i \left(
         \Sigma \rho *\eta^i + r*\eta^i
         \right)\right) =0
         \qquad i=1, \ldots, k
         \\
         \rho (0,x) = \rho_o (x)
       \end{array}
     \right.
   \end{equation}
   \begin{equation}
     \label{eq:26}
     \left\{
       \begin{array}{l}
         \partial_t \widehat\rho^i
         + \partial_x
         \left(\widehat\rho^i \; v_{NL}^i \left(
         \Sigma \widehat \rho *\eta^i + \widehat r*\eta^i
         \right)\right) =0
         \qquad i=1, \ldots, k
         \\
         \widehat \rho (0,x) = \widehat \rho_o (x)
       \end{array}
     \right.
   \end{equation}
   Then, there exists a constant $\mathcal{C}_{3}$ such that for all $t \in [0,T]$
   \begin{equation}
     \label{eq:56}
     \norma{\rho (t) - \widehat\rho (t)}_{\W11 (\reali; \reali^k)}
     \leq
     (1+\mathcal{C}_{3}\,t)
     \norma{\rho_o - \widehat\rho_o}_{\W11 (\reali;\reali^k)}
     +
     \mathcal{C}_{3} \int_0^t \norma{r (\tau) - \widehat r (\tau)}_{\L1 (\reali;\reali)} \d\tau
   \end{equation}
   where
   \begin{equation}
     \label{eq:57}
     \mathcal{C}_{3}\! \coloneqq \!
     \mathcal{C}\!
     \left(
       \norma{\eta}_{\W3\infty(\reali;\reali^k)},
       \norma{v_{NL}}_{\W3\infty(\reali;\reali^k)},
       \mathcal{R},
       \norma{\rho_o}_{\cBV{1}(\reali;\reali^k)},
       \norma{\widehat\rho_o}_{\cBV{} (\reali;\reali^k)},
       t
     \right)
   \end{equation}
 \end{lemma}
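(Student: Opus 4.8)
The plan is to prove \eqref{eq:56}--\eqref{eq:57} first under the additional regularity $\rho_o,\widehat\rho_o\in(\C2\cap\W2\infty\cap\W21)(\reali;\reali^k)$, and then to remove it through Lemma~\ref{lem:lui}. In the regular case, Lemma~\ref{lem:3} applied to the fluxes $V^i(t,x,q)=v_{NL}^i(\Sigma q+(r(t)*\eta^i)(x))$ and $\widehat V^i(t,x,q)=v_{NL}^i(\Sigma q+(\widehat r(t)*\eta^i)(x))$ shows that $V,\widehat V\in\C0([0,T];\mathcal{V})$ with $\mathcal V$-norm bounded through $\mathcal R$; moving every spatial derivative onto $\eta^i$ and using that $v_{NL}$ and its first three derivatives are globally Lipschitz by~\ref{item:17}, one also gets $\norma{V(\tau)-\widehat V(\tau)}_{\mathcal V}\le\mathcal{C}(\norma{\eta}_{\W3\infty},\norma{v_{NL}}_{\W3\infty},\mathcal R)\,\norma{r(\tau)-\widehat r(\tau)}_{\L1}$ for all $\tau\in[0,T]$. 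By Lemma~\ref{lem:5} both solutions $\rho,\widehat\rho$ are then $\C1$, so the representation formulas \eqref{eq:13}--\eqref{eq:12}--\eqref{eq:11} and \eqref{eq:27} hold for $\rho$ (with flow $X^i$ and factor $\mathcal E^i$) and for $\widehat\rho$ (with $\widehat X^i$, $\widehat{\mathcal E}^i$), with the uniform bounds \eqref{eq:16}--\eqref{eq:18} and a common constant $Q$.

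Next I would estimate the two flows and exponential factors: Gronwall applied to \eqref{eq:11} and to \eqref{eq:12}, \eqref{eq:60}, together with the pointwise bounds $|v^i(s,\cdot)-\widehat v^i(s,\cdot)|+|\partial_x v^i(s,\cdot)-\partial_x\widehat v^i(s,\cdot)|+|\partial_{xx}^2 v^i(s,\cdot)-\partial_{xx}^2\widehat v^i(s,\cdot)|\le\mathcal{C}\bigl(\norma{(\rho-\widehat\rho)(s)}_{\L1}+\norma{(r-\widehat r)(s)}_{\L1}\bigr)$ --- where again all convolutions shift derivatives onto $\eta^i$, so only $\L1$ norms of $\rho-\widehat\rho$ enter --- yields $\sup_x|X^i(s;t,x)-\widehat X^i(s;t,x)|+\sup_x|\mathcal E^i(s,t,x)-\widehat{\mathcal E}^i(s,t,x)|\le\mathcal{C}\int_0^t\bigl(\norma{(\rho-\widehat\rho)(\tau)}_{\L1}+\norma{(r-\widehat r)(\tau)}_{\L1}\bigr)\d\tau$. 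Then, subtracting $\widehat\rho^i(t,x)=\widehat\rho_o^i(\widehat X^i(0;t,x))\,\widehat{\mathcal E}^i(0,t,x)$ from the analogous formula for $\rho^i$, splitting the difference and using the change of variables \eqref{eq:59}--\eqref{eq:17}, the leading term becomes $e^{Qt}\norma{\rho_o-\widehat\rho_o}_{\L1}\le(1+\mathcal{C}t)\norma{\rho_o-\widehat\rho_o}_{\L1}$ and the rest is controlled by the flow/factor differences; Gronwall in $t$ closes this to $\norma{(\rho-\widehat\rho)(t)}_{\L1}\le(1+\mathcal{C}t)\norma{\rho_o-\widehat\rho_o}_{\L1}+\mathcal{C}\int_0^t\norma{(r-\widehat r)(\tau)}_{\L1}\d\tau$.

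For the derivative, I would expand $\partial_x\rho^i(t,\cdot)-\partial_x\widehat\rho^i(t,\cdot)$ from \eqref{eq:27}, arranging the decomposition so that in the leading terms $(\rho_o^i)'$ and $\rho_o^i$ are evaluated along $\widehat X^i$, and every flow difference $X^i-\widehat X^i$ multiplies $(\rho_o^i)'$ or $\rho_o^i$, never $(\widehat\rho_o^i)'$. The term in which $X^i-\widehat X^i$ meets $(\rho_o^i)'$ is treated by \cite[Lemma~4.1]{OperaPrima}, exactly as in the long displayed estimate in the proof of Lemma~\ref{lem:5}, and produces $\tv\bigl((\rho_o^i)'\bigr)\,\sup_x|X^i(0;t,x)-\widehat X^i(0;t,x)|$; this is the only place where $\BV$-regularity of a first derivative is used, which is why $\rho_o\in\cBV1$ is required but $\widehat\rho_o$ only $\W11$. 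Through \eqref{eq:59}, all remaining terms reduce either to $e^{Qt}\norma{(\rho_o^i)'-(\widehat\rho_o^i)'}_{\L1}\le(1+\mathcal{C}t)\norma{(\rho_o^i)'-(\widehat\rho_o^i)'}_{\L1}$ (supplying the ``$1$''), to $\mathcal{C}t$ times a data norm, or to $\mathcal{C}\int_0^t\bigl(\norma{(\rho-\widehat\rho)(\tau)}_{\L1}+\norma{(r-\widehat r)(\tau)}_{\L1}\bigr)\d\tau$. Inserting the $\L1$ bound just obtained and summing over $i$ gives $\norma{\partial_x(\rho-\widehat\rho)(t)}_{\L1}\le(1+\mathcal{C}t)\norma{\rho_o-\widehat\rho_o}_{\W11}+\mathcal{C}\int_0^t\norma{(r-\widehat r)(\tau)}_{\L1}\d\tau$; adding the previous step yields \eqref{eq:56} with a constant of the form \eqref{eq:57} --- observe that by Lemma~\ref{lem:lei} the Lipschitz constant of $\rho_o^i$ is $\le\tfrac12\tv(\rho_o')$, hence already subsumed in $\norma{\rho_o}_{\cBV1}$.

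Finally, for general $\rho_o\in\cBV1$ and $\widehat\rho_o\in\W11$, choose $\rho_{o,n},\widehat\rho_{o,n}\in(\C2\cap\W2\infty\cap\W21)(\reali;\reali^k)$ with $\rho_{o,n}\to\rho_o$, $\widehat\rho_{o,n}\to\widehat\rho_o$ in $\W11$, $\norma{\rho_{o,n}}_{\W21}\to\norma{\rho_o}_{\cBV1}$ (Lemma~\ref{lem:lui}) and $\sup_n\norma{\widehat\rho_{o,n}}_{\W11}<\infty$; the corresponding solutions $\rho_n,\widehat\rho_n$ satisfy the regular-case estimate with constant $\mathcal{C}_{3,n}$. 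By \ref{item:44} in Lemma~\ref{lem:4}, $\rho_n\to\rho$ and $\widehat\rho_n\to\widehat\rho$ in $\C0([0,T];\L1)$, while \eqref{eq:45} bounds $\norma{\rho_n(t)}_{\W11}$ and $\norma{\widehat\rho_n(t)}_{\W11}$ uniformly in $n$; passing the estimate to the limit --- using lower semicontinuity of the total variation under $\L1$ convergence on the left-hand side, $\W11$ convergence of the data on the right, and continuity and monotonicity of $\mathcal{C}(\cdots)$ with $\norma{\rho_{o,n}}_{\W21}\to\norma{\rho_o}_{\cBV1}$ to get $\mathcal{C}_{3,n}\to\mathcal{C}_3$ --- delivers \eqref{eq:56}--\eqref{eq:57} in general (the membership $\rho(t),\widehat\rho(t)\in\W11$ being part of what this argument provides). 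I expect the main obstacle to be the bookkeeping in the derivative estimate: organizing the expansion of $\partial_x\rho^i-\partial_x\widehat\rho^i$ so that the data-difference coefficient is genuinely $1+O(t)$ (which forces keeping the sharp factor $e^{Qt}$, not $e^{QT}$, on the leading terms), so that the sole demand of $\BV$-regularity of a derivative falls on $\rho_o$, and so that every other term has the Gronwall-admissible form $\int_0^t(\norma{\rho-\widehat\rho}_{\L1}+\norma{r-\widehat r}_{\L1})\d\tau$.
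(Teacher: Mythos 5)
The heart of your regular--data argument coincides with the paper's proof: the comparison of the flows $X^i,\widehat X^i$ and of the weights $\mathcal{E}^i,\widehat{\mathcal{E}}^i$, the observation that all velocity differences (up to third order in $x$) are controlled by $\norma{\rho-\widehat\rho}_{\L1}+\norma{r-\widehat r}_{\L1}$ because every spatial derivative falls on $\eta$, the asymmetric decomposition in which the flow differences always multiply $\rho_o^i$ or $(\rho_o^i)'$ (so that the single use of \cite[Lemma~4.1]{OperaPrima} charges $\tv\left((\rho_o^i)'\right)$ to the $\cBV1$ datum) while the data differences are evaluated along $\widehat X^i$ and produce the $1+\mathcal{C}t$ coefficient after the change of variable~\eqref{eq:59}, and the closing Gronwall argument are all exactly the paper's steps. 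The difference is that the paper runs this computation directly on the given data $\rho_o\in\cBV1(\reali;\reali^k)$, $\widehat\rho_o\in\W11(\reali;\reali^k)$: since the velocity fields are $\W3\infty$ in $x$ independently of the regularity of $\rho$ and $r$, the flows $x\mapsto X^i(0;t,x)$ are $\C1$ diffeomorphisms with the uniform bounds~\eqref{eq:16}--\eqref{eq:18}, so the representation~\eqref{eq:13} can be differentiated a.e.\ and no smooth--data reduction is needed.

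Your additional approximation detour is where the proof, as written, has a gap: the final limit passage does not deliver~\eqref{eq:56}. Lower semicontinuity of the total variation under the $\L1$ convergence of $\rho_n(t)-\widehat\rho_n(t)$ only yields a bound on $\tv\left(\rho(t)-\widehat\rho(t)\right)$, i.e.\ a $\BV$ conclusion; to obtain the $\W11$ estimate you must know that $\partial_x\left(\rho(t)-\widehat\rho(t)\right)$ is an $\L1$ function, which is exactly the membership you declare to be ``part of what this argument provides'' without proving it. For $\rho(t)$ it can indeed be extracted from your own smooth--data estimate (two approximants $\rho_{o,n},\rho_{o,m}$, the same $r$ in both slots, constants uniform by Lemma~\ref{lem:lui}, hence a Cauchy sequence in $\C0\left([0,T];\W11(\reali;\reali^k)\right)$), but for $\widehat\rho(t)$ this device fails: the smooth--data constant requires a $\W21$--type bound on the datum occupying the $\cBV1$ slot, and the mollifications of a generic $\W11$ function $\widehat\rho_o$ have second derivatives unbounded in $\L1$, so no uniform constant, no Cauchy property and no control of $\partial_x\widehat\rho(t)$ are available along your scheme. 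The natural repair is precisely the paper's route: use the representation~\eqref{eq:13}--\eqref{eq:11} for the actual solutions, which shows $\widehat\rho(t)=\widehat\rho_o\circ\widehat X(0;t,\cdot)\,\widehat{\mathcal{E}}(0,t,\cdot)\in\W11(\reali;\reali^k)$ and lets your decomposition run verbatim on the original data, at which point the smooth--data reduction and the limiting step can be dropped altogether.
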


 Using the particular form of~\eqref{eq:23}--\eqref{eq:26},
 Lemma~\ref{lem:Linfty} improves the stability
 estimates~\ref{item:44}--\ref{item:45} in Lemma~\ref{lem:4}.

 \begin{proofof}{Lemma~\ref{lem:Linfty}}
   Throughout, fix $i=1, \ldots,k$. \Cref{lem:3} ensures that we
   can apply \Cref{lem:4} to~\eqref{eq:23} and~\eqref{eq:26}. Hence,
   there exist unique
   $\rho, \widehat{\rho} \in \C0\left([0,T];
     \L1(\reali;\reali^k)\right)$ solutions to~\eqref{eq:23}
   and~\eqref{eq:26} in the sense of \Cref{def:solution}. Introduce
   the velocities
   \begin{eqnarray*}
     v^i (t,x)
     & \coloneqq
     & v_{NL}^i \left(
       \left(\Sigma \rho (t) *\eta^i\right) (x) +
       \left(r (t)*\eta^i\right) (x)
       \right),
     \\
     \widehat v^i (t,x)
     & \coloneqq
     & v_{NL}^i \left(
       \left(\Sigma \widehat\rho (t) *\eta^i\right) (x) +
       \left(\widehat r (t)*\eta^i\right) (x)
       \right).
   \end{eqnarray*}
   The same computations as in Claim~1 in~\Cref{lem:5} and the
   estimate~\eqref{eq:24} in~\Cref{lem:3}, yield that there exists a
   constant
   \begin{equation*}
     Q \coloneqq \mathcal{C}\left(
       \norma{\eta'}_{\W1\infty(\reali; \reali^k)},
       \norma{v_{NL}}_{\W2\infty(\reali; \reali^k)},
       \mathcal{R},
       \norma{\rho_o}_{\L1(\reali; \reali^k)},
       \norma{\widehat\rho_o}_{\L1(\reali; \reali^k)}
     \right)
   \end{equation*}
   such that
   \begin{equation}
     \label{eq:37}
     \max
     \left\{\norma{\partial_x v^i}_{\C0 ([0,t];\W1\infty (\reali;\reali))}
       \,,\;
       \norma{\partial_x \widehat v^i}_{\C0 ([0,t];\W1\infty (\reali;\reali))}
     \right\} \leq Q \,.
   \end{equation}
   With this choice, we have
   \begin{equation}
     \label{eq:52}
     \max
     \left\{
       \mathcal{E}^i(t,t_o,x_o) ,
       \widehat{\mathcal{E}}^i(t,t_o,x_o)
     \right\}
     \leq
     e^{Q \modulo{t-t_o}}
     \leq e^{QT} \, \quad
     \forall t,t_o\in [0,T], x_o \in \reali \,.
   \end{equation}
   Moreover, for all $s,t \in [0,T], s\leq t$, the following estimate
   holds by an adaptation of~\cite[Proposition~4.2]{OperaPrima}:
   \begin{align}
     \label{eq:30}
     \modulo{X^i(s;t,x) - \widehat X^i(s;t,x)}
     \leq
     & \int_s^t \norma{v^i(\tau) - \widehat v^i(\tau)}_{ \L\infty(\reali;\reali))}
       e^{Q(t-\tau)} \d\tau
     \\
     \label{eq:31}
     \leq
     & \modulo{t-s} \, e^{Q \modulo{t-s}} \,
       \norma{v^i - \widehat v^i}_{\C0([0,t]; \L\infty(\reali;\reali))}\,.
   \end{align}
   At last, taking advantage of the expression~\eqref{eq:12} and
   applying the estimates~\eqref{eq:37}--\eqref{eq:31}, one gets that
   for all $s,t \in [0,T]$, $s \leq t$ $x \in \reali$
   \begin{align}
     \nonumber
     & \modulo{\mathcal{E}^i(s,t,x) - \widehat{\mathcal{E}}^i(s,t,x)}
     \\
     \ghost{
     \nonumber
     =
     & \modulo{ \exp \left( \int_t^s
       \partial_x v^i\left(\tau, X^i(\tau; t,x)\right)
       \d\tau \right) - \exp \left( \int_t^s \partial_x \widehat
       v^i\left(\tau, \widehat X^i(\tau; t,x)\right) \d\tau \right) }
     \\
     }
     \nonumber
     \leq
     & \max \left\{
       e^{\norma{\partial_x \widehat v^i}_{\C0([0,t];       \L\infty(\reali;\reali))}
       \modulo{t-s}},
       e^{\norma{\partial_x v^i}_{\C0([0,t]; \L\infty(\reali;\reali))}
       \modulo{t-s}}
       \right\}
     \\
     \nonumber
     &  \times \modulo{ \int_t^s \partial_x v^i\left(\tau, X^i(\tau;
       t,x)\right) - \partial_x \widehat v^i\left(\tau, \widehat X^i(\tau; t,x)\right) \dd
       \tau }
     \\
     \nonumber
     \leq
     & e^{ Q\modulo{t-s}}
       \modulo{ \int_t^s \partial_x v^i\left(\tau, X^i(\tau;t,x)\right) - \partial_x v^i\left(\tau, \widehat X^i(\tau; t,x)\right) \d\tau }
     \\
     \nonumber
     & + e^{ Q\modulo{t-s}} \modulo{ \int_t^s \partial_x
       v^i\left(\tau, \widehat X^i(\tau; t,x)\right) -
       \partial_x \widehat v^i\left(\tau, \widehat X^i(\tau; t,x)\right)
       \d\tau }
     \\
     \nonumber
     \leq
     & e^{ Q\modulo{t-s}} \left(
       Q \, \int_s^t \modulo {X^i(\tau; t,x) - \widehat
       X^i(\tau; t,x)} \d\tau
       +
       \int_s^t \norma{\partial_x v^i (\tau) -
       \partial_x \widehat v^i(\tau)}_{\L\infty(\reali;\reali)} \d\tau
       \right)
     \\
     \nonumber
     \leq
     & e^{ Q\modulo{t-s}}\!
       \left(
       \!Q \int_s^t \!\!\int_\tau^t
       \norma{v^i(\tau') - \widehat v^i(\tau')}_{ \L\infty(\reali;\reali))} \!
       e^{Q(t-\tau')} \d\tau' \! \d\tau
       +\! \int_s^t \!\norma{\partial_x v^i (\tau) -
       \partial_x \widehat v^i(\tau)}_{\L\infty(\reali;\reali)} \d\tau\!
       \right)
     \\
     \nonumber
     \leq
     & e^{Q\modulo{t-s}}
       \left( e^{ Q\modulo{t-s}}
       Q \modulo{t-s} \int_s^t
       \norma{v^i(\tau) - \widehat v^i(\tau)}_{ \L\infty(\reali;\reali))} \d\tau
       +   \int_s^t \norma{\partial_x v^i (\tau) -
       \partial_x \widehat v^i(\tau)}_{\L\infty(\reali;\reali)} \d\tau
       \right)
     \\
     \label{eq:49}
     \leq
     & e^{ 2 Q\modulo{t-s}} \left(Q \modulo{t-s} + 1 \right)
       \norma{v^i - \widehat v^i}_{\L1((s, t); \W1\infty(\reali; \reali))} \,.
   \end{align}
   Now observe that for all $\tau \in [0,T]$
   \begin{align}
     \nonumber
     &
       \norma{v^i(\tau, \cdot) -
       \widehat v^i(\tau, \cdot )}_{\L\infty(\reali;\reali)}
     \\
     \nonumber
     = &
         \sup_{x \in \reali}
         \modulo{v_{NL}^i \!\left(
         \left(\Sigma\rho(\tau)*\eta^i\right)(x) + \!(r(\tau) *\eta^i)(x)\right)
         -
         v_{NL}^i \!\left(\!
         \left(\Sigma \widehat\rho(\tau)*\eta^i\right)(x)+(\widehat r(\tau) *\eta)(x) \right)}
     \\
     \label{eq:32}
     \leq &
            \norma{v_{NL}'}_{\L\infty(\reali;\reali^k)} \,
            \norma{\eta}_{\L\infty(\reali;\reali^k)}
            \left(
            \norma{\rho(\tau) - \widehat \rho(\tau)}_{\L1(\reali;\reali^k)}
            +
            \norma{r(\tau) - \widehat r(\tau)}_{\L1(\reali;\reali)}
            \right),
   \end{align}
   and
   \begin{eqnarray}
     \nonumber
     &
     & \norma{\partial_x v^i(\tau) - \partial_x \widehat v^i(\tau)}_{\L\infty(\reali;\reali)}
     \\
     \nonumber
     & \leq
     & \left( \norma{(v_{NL}^i)'}_{\L\infty(\reali;\reali)}
       + \norma{(v_{NL}^i)''}_{\L\infty(\reali;\reali)}
       \norma{\eta^i}_{\L\infty(\reali;\reali)}
       \norma{\Sigma \widehat \rho(\tau) +
       \widehat r(\tau)}_{\L1(\reali;\reali)}
       \right)
     \\
     \nonumber
     &
     & \times
       \norma{(\eta^i)'}_{\L\infty(\reali;\reali)}
       \left(
       \sum_{j=1}^k \norma{\rho^j(\tau)- \widehat \rho^j(\tau)}_{\L1(\reali;\reali)}
       +
       \norma{r(\tau)- \widehat r(\tau) }_{\L1(\reali;\reali)}
        \right)
     \\
     \nonumber
     & \leq
     & \left(
       \norma{v_{NL}'}_{\L\infty(\reali;\reali^k)}
       +
       \norma{v_{NL}''}_{\L\infty(\reali;\reali^k)}
       \norma{\eta}_{\L\infty(\reali;\reali^k)}
       \left(
 \norma{\widehat\rho_o}_{\L1(\reali;\reali^k)}
        + \mathcal{R}
       \right)
       \right) \norma{\eta'}_{\L\infty(\reali;\reali^k)}
     \\
     \label{eq:33}
     &
     & \times
       \left(
       \norma{\rho(\tau)- \widehat \rho(\tau )}_{\L1(\reali;\reali^k)}
       +
       \norma{r(\tau)- \widehat r(\tau) }_{\L1(\reali;\reali)}
       \right)
   \end{eqnarray}
   by~\Cref{item:46} in~\Cref{lem:4} and
   \begin{eqnarray}
     \nonumber
     &
     & \norma{\partial_{xx}^2 v^i(\tau, \cdot) -
       \partial_{xx}^2 \widehat v^i(\tau, \cdot)}_{\L\infty(\reali;\reali)}
     \\
     \label{eq:34}
     & \leq
     & \left(
       1 +
       3 \norma{\eta}_{\W1\infty(\reali;\reali^k)}
       \left( \norma{\rho_o}_{\L1(\reali;\reali^k)} + \mathcal{R}
       \right)
       +
       \norma{\eta'}^2_{\L\infty(\reali;\reali^k)}
       \left(\norma{\rho_o}_{\L1(\reali;\reali^k)} + \mathcal{R}
       \right)^2
       \right)
     \\
     \nonumber
     &
     & \times
       \norma{\eta}_{\W2\infty(\reali;\reali^k)}
       \norma{v_{NL}'}_{\W2\infty(\reali;\reali^k)}
       \left( \norma{\rho(\tau) - \widehat{\rho}(\tau) }_{ \L1(\reali;\reali^k)}
       + \norma{r(\tau) - \widehat r(\tau)}_{ \L1(\reali;\reali)}
       \right).
   \end{eqnarray}

   \paragraph{Bound on the distance between $\rho^i (t)$ and
     $\widehat\rho^i (t)$ in $\L1(\reali; \reali^k)$.}
   Owing to~\eqref{eq:13}, one obtains that for all $t \in [0,T]$
   \begin{align}
     \nonumber
     & \norma{\rho^i(t) - \widehat{\rho}^i(t)}_{\L1(\reali; \reali)}
     \\
     \ghost{
     \nonumber
     =
     & \int_\reali \modulo{
       \rho_o^i\left(X^i(0;t,x)\right)
       \mathcal{E}^i (0;t,x)
       -
       \widehat\rho_o^i\left( \widehat{X}^i(0;t,x) \right)
       \widehat{\mathcal{E}}^i (0;t,x)
       }  \, \d{x}
     \\
     }
     \nonumber
     \leq
     & \int_\reali
       \modulo{
       \left(
       \rho_o^i\left( X^i(0;t,x) \right)
       -
       \widehat \rho_o^i\left( X^i(0;t,x) \right)
       \right)
       \mathcal{E}^i(0,t,x)
       }
       \d x
     \\
     \nonumber
     &
       +
       \int_\reali \modulo{
       \left(
       \widehat \rho_o^i\left(X^i(0;t,x) \right) -
       \widehat \rho_o^i\left( \widehat{X}^i(0;t,x) \right)
       \right)
       \mathcal{E}^i(0,t,x)}  \, \d{x}
     \\
     \nonumber
     & +
       \int_\reali \modulo{
       \widehat \rho_o^i \left( \widehat{X}^i(0;t,x) \right)
       \left(
       \mathcal{E}^i(0,t,x) - \widehat{\mathcal{E}}^i(0;t,x)
       \right)
       }  \, \d{x}
     \\
     \nonumber
     \leq
     & \norma{\rho_o^i - \widehat \rho_o^i}_{\L1(\reali; \reali)}
       +
       2 \tv(\widehat\rho_o^i) \,
       \sup_{x \in \reali} \modulo{X^i(0;t,x) - \widehat X^i(0;t,x)}
     & [\mbox{By~\cite[Lemma 4.1]{OperaPrima}}]
     \\
     \nonumber
     & + e^{3Qt} \norma{\widehat \rho_o^i}_{\L1(\reali; \reali)}
       (Q t + 1)
       \norma{v^i - \widehat v^i}_{\L1([0, t]; \W1\infty(\reali; \reali))}
     & [\mbox{By~\eqref{eq:49}}]
     \\ \label{eq:42}
     \leq
     & \norma{\rho_o^i - \widehat \rho_o^i}_{\L1(\reali; \reali)}
       {+}
       e^{Qt} \!\!
       \left( \!
       2\tv(\widehat \rho_o^i) {+}
       e^{2Qt} \norma{\widehat \rho_o^i}_{\L1(\reali; \reali)}
       (Q t \!+\! 1)
       \right)
       \norma{v^i {-} \widehat v^i}_{\L1([0, t]; \W1\infty(\reali; \reali))} \!.
       \!\!\!\!\!\!\!\!\!\!\!\!\!\!\!\!\!\!\!\!\!\!\!\!
       \!\!\!\!\!\!\!\!\!\!\!\!\!\!\!\!\!\!\!\!
   \end{align}
where in the previous computations we exploit~\eqref{eq:30}.  Now,
inserting~\eqref{eq:32} and~\eqref{eq:33} in~\eqref{eq:42} allows to
prove
\begin{eqnarray*}
  &
  & \norma{\rho(t) - \widehat\rho(t)}_{\L1(\reali; \reali^k)}
  \\
  & \leq
  & \norma{\rho_o - \widehat \rho_o}_{\L1(\reali; \reali^k)}
  \\
  &
  & +\;
    \mathcal{C}\left(
    \norma{\eta}_{\W2\infty(\reali; \reali^k)},
    \norma{v_{NL}}_{\W2\infty(\reali; \reali^k)},
    \norma{\rho_o}_{\L1(\reali; \reali^k)},
    \norma{\widehat \rho_o}_{\cBV{}(\reali; \reali^k)},
    \mathcal{R},
    t
    \right)
  \\
  &
  & \quad\times
    \int_0^t
    \left(
    \norma{\rho(\tau)- \widehat \rho(\tau)}_{\L1(\reali; \reali^k)}
    +
    \norma{r(\tau)- \widehat r(\tau)}_{\L1(\reali; \reali)}
    \right)
    \d \tau,
\end{eqnarray*}
and, by an application of Gronwall Lemma, it is possible to
obtain
\begin{eqnarray}
  \nonumber
  &
  & \norma{\rho(t) - \widehat\rho(t)}_{\L1(\reali; \reali^k)}
  \\
  \nonumber
  & \leq
  & \left(1
    +
    \mathcal{C}\left(
    \norma{\eta}_{\W2\infty(\reali; \reali^k)},
    \norma{v_{NL}}_{\W2\infty(\reali; \reali^k)},
    \norma{\rho_o}_{\L1(\reali; \reali^k)},
    \norma{\widehat \rho_o}_{\cBV{}(\reali; \reali^k)},
    \mathcal{R},
    t
    \right) t\right)
  \\
  \label{eq:50}
  &
  & \times
    \left(
    \norma{\rho_o - \widehat \rho_o}_{\L1(\reali; \reali^k)}
    +
    \int_0^t
    \norma{r(\tau)- \widehat r(\tau)}_{\L1(\reali; \reali)} \d\tau
    \right) \,.
\end{eqnarray}

\paragraph{Bound on the distance between $\partial_x\rho$ and
  $\partial_x\widehat\rho$ in $\C0\left([0,T]; \L1(\reali; \reali^k)\right)$.}
An application of~\cite[Lemma~4.1]{OperaPrima}, together with the
estimates~\eqref{eq:37} and~\eqref{eq:52}, leads to
\begin{align*}
  & \norma{\partial_x \rho (t) - \partial_x \widehat{\rho}^i (t)}_{\L1(\reali;\reali)}
  \\
  \leq &
         \int_\reali \modulo{
         \left(
         (\rho^i_o)'\left(X^i(0;t,x)\right) - (\rho^i_o)'\left(\widehat X^i(0;t,x)\right)
         \right)
         \mathcal{E}^i(0,t,x)^2
         } \d{x}
  \\
  & + \int_\reali \modulo{
    (\rho^i_o)'\left(\widehat X^i(0;t,x)\right)
    \left(
    \mathcal{E}^i(0,t,x)^2 - \widehat{\mathcal{E}^i}(0;t,x)^2
    \right)} \d{x}
  \\
  &  +
    \int_\reali \modulo{
    (\rho^i_o)'\left(\widehat X^i(0;t,x)\right) - (\widehat \rho_o^i) '\left(\widehat X^i(0;t,x)\right)
    }
    \widehat{\mathcal{E}^i}(0;t,x)^2
    \d{x}
  \\
  & +
    \int_\reali
    \modulo{
    \left(
    \rho^i_o\left(X^i(0;t,x)\right) - \rho_o^i\left(\widehat X^i(0;t,x)\right)
    \right)
    \int_0^t \partial_{xx}^2v^i\left(s, X^i(s;t,x)\right)\mathcal{E}^i(s,t,x)\d{s}
    } \d{x}
  \\
  &  +
    \int_\reali \modulo{
    \rho^i_o\left(\widehat X^i(0;t,x)\right)
    \int_0^t
    \left(
    \partial_{xx}^2v^i\left(s, X^i(s;t,x)\right) -
    \partial_{xx}^2\widehat v^i\left(s, X^i(s;t,x)\right)
    \right)
    \mathcal{E}^i(s,t,x)\d{s}} \d{x}
  \\
  &  +
    \int_\reali \modulo{
    \rho^i_o\left(\widehat X^i(0;t,x)\right)
    \int_0^t \left(
    \partial_{xx}^2\widehat v^i\left(s, X^i(s;t,x)\right)
    -
    \partial_{xx}^2\widehat v^i\left(s, \widehat X^i(s;t,x)\right)
    \right)
    \mathcal{E}^i(s,t,x)\d{s}
    } \d{x}
  \\
  &  +
    \int_\reali \modulo{
    \rho^i_o\left(\widehat X^i(0;t,x)\right)
    \int_0^t \partial_{xx}^2\widehat v^i\left(s, \widehat X^i(s;t,x)\right)
    \left(
    \mathcal{E}^i(s,t,x) -
    \widehat{\mathcal{E}^i}(s;t,x)
    \right)\d{s}
    } \d{x}
  \\
  &  +
    \int_\reali
    \modulo{
    \left(
    \rho^i_o\left(\widehat X^i(0;t,x)\right) - \widehat \rho^i_o\left(\widehat X^i(0;t,x)\right)
    \right)
    \int_0^t \partial_{xx}^2 \widehat v^i\left(s,\widehat X^i(s;t,x)\right)
    \widehat{\mathcal{E}^i}(s;t,x) \d{s}
    }
    \d{x}
  \\
  \leq &
         2 e^{Qt}
         \tv((\rho^i_o)')
         \norma{X^i(0;t,\cdot) - \widehat X^i(0;t,\cdot)}_{\L\infty(\reali;\reali)}
         \\
         &
         +
         2 e^{2Qt}
         \norma{(\rho^i_o)'}_{\L1(\reali,\reali)}
         \norma{\mathcal{E}^i(0,t,\cdot) - \widehat{\mathcal{E}^i}(0,t, \cdot)}_{\L\infty(\reali;\reali)}
  \\
  &
    +
    e^{Qt}\norma{(\rho^i_o)' - (\widehat\rho_o^i) '}_{\L1(\reali; \reali)}
    +
    2 Qt e^{2Qt} \tv(\rho_o^i)
    \norma{X^i(0;t,\cdot) - \widehat X^i(0;t,\cdot)}_{\L\infty(\reali;\reali)}
  \\
  &  +
    e^{2Qt}
    \norma{\rho_o^i}_{\L1(\reali;\reali)}
    \int_0^t
    \norma{\partial_{xx}^2v^i(s) - \partial_{xx}^2 \widehat v^i(s)}_{\L\infty(\reali;\reali)} \d{s}
  \\
  &  +
    e^{2Qt} \norma{\rho_o^i}_{\L1(\reali;\reali)}
    \norma{\partial_{xxx}^3 \widehat v^i}_{\L\infty([0,t]\times \reali;\reali)}
    \int_0^t
    \norma{X^i(s;t,\cdot) - \widehat X^i(s;t,\cdot)}_{\L\infty(\reali;\reali)} \d{s}
  \\
  &  +
    Qe^{Qt}
    \norma{\rho_o^i}_{\L1(\reali;\reali)}
    \int_0^t
    \norma{\mathcal{E}^i(s;t,\cdot) - \widehat{\mathcal{E}^i}(s; t,\cdot) }_{\L\infty(\reali; \reali)} \d{s}
    +
    Qt e^{2Qt} \norma{\rho_o^i - \widehat\rho_o^i}_{\L1(\reali; \reali)}
  \\
  \leq
  & \left(1
    +
    \mathcal{C}
    \left(
    \norma{\eta'}_{\W1\infty(\reali; \reali^k)},
    \norma{v_{NL}}_{\W2\infty(\reali; \reali^k)},
    \norma{\rho_o}_{\L1(\reali; \reali^k)},
    \norma{\widehat\rho_o}_{\L1(\reali; \reali^k)}, \mathcal{R}
    \right) t
    \right)
    \\
    & \quad \times
    \norma{(\rho_o^i)' - (\widehat \rho_o^i)'}_{\L1(\reali; \reali)}
  \\
  & + \mathcal{C}
    \left(
    \norma{\eta'}_{\W1\infty(\reali; \reali^k)},
    \norma{v_{NL}}_{\W2\infty(\reali; \reali^k)},
    \norma{\rho_o}_{\L1(\reali; \reali^k)},
    \norma{\widehat\rho_o}_{\L1(\reali; \reali^k)}, \mathcal{R}
    \right) t \,
    \norma{\rho_o^i - \widehat \rho_o^i}_{\L1(\reali; \reali)}
  \\
  &
    +
    \mathcal{C}
    \left(
    \norma{\eta'}_{\W2\infty(\reali; \reali^k)},
    \norma{v_{NL}}_{\W3\infty(\reali; \reali^k)},
    \norma{\rho_o}_{\cBV{1}(\reali; \reali^k)},
    \norma{\widehat\rho_o}_{\L1(\reali; \reali^k)},
    \mathcal{R},
    t
    \right)
  \\
  &
    \times
    \int_0^t
    \norma{v^i(\tau; \cdot) - \widehat v^i(\tau, \cdot)}_{ \W2\infty(\reali;\reali))} \d \tau
\end{align*}
   where in the last inequality we exploit~\eqref{eq:30},~\eqref{eq:49} and
   \begin{equation*}
     \norma{\partial^3_{xxx}\widehat v}_{\C0([0,t]; \L\infty(\reali;\reali^k))}
     \leq
     \mathcal{C} \left(
       \norma{\eta'}_{\W2\infty(\reali;\reali^k)},
       \norma{v_{NL}}_{\W3\infty(\reali;\reali^k)},
       \norma{\widehat \rho_o}_{\L1 (\reali;\reali^k)},  \mathcal{R}
     \right).
   \end{equation*}
Owing to~\eqref{eq:32}--\eqref{eq:34} and~\eqref{eq:50}, we get
\begin{align}
  \nonumber
  & \norma{\partial_x \rho (t) -
    \partial_x \widehat{\rho}(t)}_{\L1(\reali;\reali^k)}
  \\
  \nonumber
  \leq &
  \left(1
    +
    \mathcal{C}
    \left(
    \norma{\eta'}_{\W1\infty(\reali; \reali^k)},
    \norma{v_{NL}}_{\W2\infty(\reali; \reali^k)},
    \norma{\rho_o}_{\L1(\reali; \reali^k)},
    \norma{\widehat\rho_o}_{\L1(\reali; \reali^k)}, \mathcal{R}
    \right) t
    \right)
    \norma{\rho_o' - \widehat \rho_o'}_{\L1(\reali; \reali^k)}
    \\
  \nonumber
  & + \mathcal{C}
    \left(
    \norma{\eta}_{\W3\infty(\reali; \reali^k)},
    \norma{v_{NL}}_{\W3\infty(\reali; \reali^k)},
    \norma{\rho_o}_{\cBV{1}(\reali; \reali^k)},
    \norma{\widehat\rho_o}_{\cBV{}(\reali; \reali^k)}, \mathcal{R},t
    \right) t \,
    \\
  \label{eq:35}
    & \times
    \left(
    \norma{\rho_o - \widehat \rho_o}_{\L1(\reali; \reali^k)}
    +
    \int_0^t
    \norma{r(\tau)- \widehat r(\tau)}_{\L1(\reali; \reali)}
    \d \tau
    \right)
    \,.
\end{align}
Finally, \eqref{eq:56} and~\eqref{eq:57} follow from~\eqref{eq:50}
and~\eqref{eq:35}.
\end{proofof}

\subsection{The Local Problem}
\label{subsec:local-problem}

This paragraph is devoted to the Cauchy problem
\begin{equation}
  \label{eq:2}
  \left\{
    \begin{array}{l}
      \partial_t r + \partial_x \left(r \; w (t,x,r)\right) =0
      \\
      r (0,x) = r_o (x).
    \end{array}
  \right.
\end{equation}
We refer to~\cite[Definition~1]{MR267257} for the definition of
solution to~\eqref{eq:2}. Uniqueness and Lipschitz continuous
dependence on the initial data follow from~\cite[Theorem~1]{MR267257}.

\begin{lemma}[Dependence on the initial datum and uniqueness]
  \label{lem:0}
  Fix $W>0$. Assume \newcounter{ctmp}
  \begin{enumerate}[label=\bf (L\arabic*)]
  \item
    \label{item:5}
    $w \in \C1 ([0,T]\times \reali \times \reali; [0,W])$.
  \item \label{item:3new}
    $\partial^2_{tr} w, \partial^2_{xr}w \in \Lloc\infty ([0,T]\times
    \reali \times \reali; \reali)$.
    \setcounter{ctmp}{\value{enumi}}
  \end{enumerate}
  \noindent If $r,\widehat r \in \L\infty ([0,T]\times\reali; \reali)$
  are {Kru\v zkov} solutions to~\eqref{eq:2} with initial data
  $r_o,\widehat{r}_o \in \L\infty (\reali; \reali)$, then, for a.e.
  $t \in [0, T]$,
  \begin{equation}
    \label{eq:62}
    \begin{array}{c}
      \norma{r (t) - \widehat r (t)}_{\L1 ([a+\sigma \,t, b-\sigma \, t];\reali)}
      \leq
      \norma{r_o-\widehat{r_o}}_{\L1 ([a,b];\reali)}
      \qquad \mbox{ where }
      \\
      \sigma = \sup \left\{
      \modulo{w (\tau,\xi,q) + q \, \partial_r w (\tau,\xi,q)}
      \colon \tau \in [0,t] \,,\; \xi \in [a,b] \,,\; q \in [-M,M]
      \right\}
      \\
      M =
      \max\left\{\norma{r_o}_{\L\infty (\reali; \reali)},
      \norma{\widehat r_o}_{\L\infty (\reali; \reali)}\right\}
      \,.
    \end{array}
  \end{equation}
  If moreover
  $\norma{r_o - \widehat r_o}_{\L1 (\reali;\reali)} < +\infty$ and
  \begin{enumerate}[label=\bf (L\arabic*)]
    \setcounter{enumi}{\value{ctmp}}
  \item \label{item:3newnew}
    $\sup\left\{\modulo{\partial_r w (\tau,\xi,q)} \colon \tau \in
      [0,t] \,,\; \xi \in \reali \,,\; q \in [-M,M]\right\} <
    +\infty$.
  \end{enumerate}
  \setcounter{ctmp}{\value{enumi}} then
  \begin{equation}
    \label{eq:63}
    \norma{r (t) - \widehat r (t)}_{\L1 (\reali;\reali)}
    \leq
    \norma{r_o-\widehat{r}_o}_{\L1 (\reali;\reali)} \,.
  \end{equation}
  Hence, \eqref{eq:2} admits at most a unique {Kru\v zkov} solution.
\end{lemma}

\begin{proofof}{Lemma~\ref{lem:0}}
  We set $\phi (t,x,r) = r \, w(t,x,r)$ and, with reference to the
  conservation law $\partial_t r + \partial_x \phi (t,x,r) = 0$, we
  verify the assumptions in~\cite[\S~3]{MR267257}. Indeed:
  $\phi \in \C1 ([0,T] \times\reali\times\reali;\reali)$ due
  to~\ref{item:5}. Furthermore, the maps
  $r \mapsto \partial_t \phi (t,x,r)$ and
  $r \mapsto \partial_x \phi (t,x,r)$ are Lipschitz continuous on
  every compact subset of $[0,T] \times \reali \times \reali$, due
  to~\ref{item:3new}. Hence, \eqref{eq:62} follows
  from~\cite[Formula~(3.1)]{MR267257} while~\cite[Theorem~2]{MR267257}
  ensures the uniqueness of the solution. Finally, \eqref{eq:63} is
  obtained passing to the limits $a\to-\infty$ and $b \to+\infty$
  in~\eqref{eq:62}, thanks to~\ref{item:3newnew}.
\end{proofof}

\begin{corollary}[{Invariance of $[0, R_L]$ and mass conservation}]
  \label{cor:pos}
  Let~\ref{item:5} and~\ref{item:3new} hold. Fix $R_L>0$ and assume
  moreover that
  \begin{equation}
    \label{eq:86}
    w (t,x,R_L) = 0 \quad \mbox{ for all } (t,x) \in \reali_+ \times \reali \,.
  \end{equation}
  If $r_o \in \L1 (\reali; [0, R_L])$, then, for all $t\geq 0$, the
  {Kru\v zkov} solution $r$ to~\eqref{eq:2} satisfies
  \begin{equation}
    \label{eq:82}
    r (t,x) \in [0, R_L] \quad \mbox{ for a.e. } x \in \reali
    \qquad \mbox{ and } \qquad
    \norma{r (t)}_{\L1 (\reali; \reali)} = \norma{r_o}_{\L1 (\reali; \reali)} \,.
  \end{equation}
\end{corollary}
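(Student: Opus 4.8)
The plan is to obtain both assertions in~\eqref{eq:82} by comparing the {Kru\v zkov} solution $r$ to~\eqref{eq:2} with the two constant stationary solutions $\underline r\equiv 0$ and $\overline r\equiv R_L$ of $\partial_t r+\partial_x\bigl(r\,w(t,x,r)\bigr)=0$. First I would note that these constants are indeed {Kru\v zkov} solutions: the case $\underline r\equiv 0$ is trivial since $0\cdot w(t,x,0)\equiv 0$, while for $\overline r\equiv R_L$ the hypothesis~\eqref{eq:86} makes the flux $x\mapsto R_L\,w(t,x,R_L)$ vanish identically, so $\partial_t R_L+\partial_x\bigl(R_L\,w(t,x,R_L)\bigr)=0$ and $\overline r$ is a classical, hence {Kru\v zkov}, solution.

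Next, to prove $0\le r(t,x)\le R_L$ I would use the {Kru\v zkov} entropy inequality~\eqref{eq:44}--\eqref{eq:72}--\eqref{eq:73}, for the flux $(t,x,r)\mapsto r\,w(t,x,r)$, at the levels $k=R_L$ and $k=0$. For $k=R_L$ the assumption~\eqref{eq:86} makes both $k\,w(t,x,k)$ and $k\,\partial_x w(t,x,k)$ vanish (the latter because $x\mapsto w(t,x,R_L)$ is the zero map), so~\eqref{eq:73} disappears; subtracting the weak formulation of the equation, satisfied by $r$, and using $2\,(r-R_L)^+=\modulo{r-R_L}+(r-R_L)$, one gets in $\mathcal D'\bigl((0,T)\times\reali\bigr)$
\[
  \partial_t (r-R_L)^+ + \partial_x\Bigl(\caratt{\{r>R_L\}}\;r\,w(t,x,r)\Bigr)\le 0 \,,
\]
together with the initial datum $(r_o-R_L)^+\equiv 0$. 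Since the flux above is locally Lipschitz in $r$ on the bounded range of $r$ and vanishes wherever $(r-R_L)^+=0$, a finite-speed-of-propagation argument on backward cones --- of the kind underlying~\eqref{eq:62} --- forces $(r-R_L)^+\equiv 0$, i.e.\ $r\le R_L$. Likewise, for $k=0$ one has $k\,w(t,x,k)=0$ and, using $2\,r^-=\modulo{r}-r$, obtains $r^-\equiv 0$, i.e.\ $r\ge 0$; hence $r(t,x)\in[0,R_L]$ for a.e.~$(t,x)$ and $\norma{r(t)}_{\L\infty(\reali;\reali)}\le R_L$. Equivalently, this invariant-region step may be read off the comparison principle for {Kru\v zkov} solutions, cf.~\cite{MR267257,MR1022305}, applied to $(\underline r,r)$ and $(r,\overline r)$.

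It remains to check $\norma{r(t)}_{\L1}=\norma{r_o}_{\L1}$. Once $0\le r\le R_L$ is known, \eqref{eq:62} with $\widehat r\equiv 0$, letting $[a,b]$ exhaust $\reali$, gives $r(t,\cdot)\in\L1(\reali)$ with $\norma{r(t)}_{\L1}\le\norma{r_o}_{\L1}$; therefore the flux $r\,w(t,x,r)$ is bounded by $W\,R_L$ and lies in $\L1\bigl((0,T)\times\reali\bigr)$. Testing the equation with $\psi(t)\,\chi_n(x)$, where $\psi\in\Cc\infty\bigl((0,T);\reali\bigr)$ and $\chi_n\to 1$ with $0\le\chi_n\le1$, $\chi_n\equiv1$ on $[-n,n]$, $\modulo{\chi_n'}\le C/n$, the spatial-flux term is bounded by $\tfrac{C}{n}\,\norma{\psi}_{\L\infty}\!\int_0^T\!\!\int_{n\le\modulo x\le2n}\!\modulo{r\,w}\to0$, so $n\to+\infty$ leaves $\frac{\d{~}}{\d t}\int_\reali r(t,x)\,\d x=0$ in $\mathcal D'\bigl((0,T)\bigr)$. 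Thus $t\mapsto\int_\reali r(t,x)\,\d x$ is constant and, $r$ being nonnegative, $\norma{r(t)}_{\L1(\reali;\reali)}=\int_\reali r(t,x)\,\d x=\int_\reali r_o(x)\,\d x=\norma{r_o}_{\L1(\reali;\reali)}$ for all $t\ge0$.

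The main obstacle is the step passing from the distributional entropy inequality for $(r-R_L)^+$ (and $r^-$) to the pointwise bound $r\in[0,R_L]$: this is where finite speed of propagation must be invoked, and it has to be handled while the corollary only grants $r\in\L\infty\bigl([0,T]\times\reali;\reali\bigr)$ rather than $\L1$-continuity in time, with the cut-off bookkeeping at $\modulo x\to+\infty$ handled carefully; the comparison-principle formulation hides precisely this localization. Granted that, the mass identity is routine.
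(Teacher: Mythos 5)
Your argument is essentially the paper's: the invariance $r(t,x)\in[0,R_L]$ is obtained exactly as in the paper by comparing with the constant Kru\v zkov solutions $0$ and $R_L$ (your entropy-inequality derivation at the levels $k=0$ and $k=R_L$ is just an unpacking of the comparison principle the paper cites), and the $\L1$ identity then follows from conservation of mass, which the paper asserts in one line and you verify with a standard cutoff argument. The only point worth flagging is that your full-line bound $\norma{r(t)}_{\L1(\reali;\reali)}\leq\norma{r_o}_{\L1(\reali;\reali)}$, obtained by letting $[a,b]$ exhaust $\reali$ in~\eqref{eq:62}, tacitly needs the uniform speed bound~\ref{item:3newnew} (i.e.\ the hypothesis behind~\eqref{eq:63}), since the constant $\sigma$ in~\eqref{eq:62} depends on $[a,b]$; this assumption is not listed in the corollary but holds wherever the corollary is applied in the paper, and the paper's own one-line mass-conservation step glosses over the same issue.
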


\begin{proofof}{Corollary~\ref{cor:pos}}
  Clearly, $r_o \equiv 0$ yields the null solution
  by~\eqref{eq:86} while $\widehat r_o \equiv R_L$ yields the constant solution
  $\widehat r (t,x) = R_L$. The comparison
  principle~\cite[Theorem~3]{MR267257} ensures the invariance
  $r(t,x) \in [0, R_L]$ for all $(t,x) \in \reali_+ \times
  \reali$. Then, the conservation of mass yields the latter equality.
\end{proofof}

We now consider the existence of solutions to~\eqref{eq:2}.
\begin{lemma}[Existence]
  \label{lem:1}
  Fix $W > 0$. Let~\ref{item:5} and~\ref{item:3new} hold and assume
  also that
  \begin{enumerate}[label=\bf (L\arabic*)]
    \setcounter{enumi}{\value{ctmp}}
  \item \label{item:2}
    $\partial_{xx}^2 w, \partial^2_{xr}w \in \C0\left([0,T]\times \reali
    \times \reali; \reali\right)$
  \item \label{item:6}
    $\partial_x w, \partial_r w \in \L\infty
    \left([0,T]\times\reali\times\reali;\reali\right)$.
  \item \label{item:7} There exists $R\geq0$ such that for all
    $r \leq -R$, $w (t,x,r) = W$ and for all $r \geq R$,
    $w (t,x,r) = 0$.
  \item \label{item:8}
    $\partial^2_{xr} w \in \L\infty
    \left([0,T]\times\reali\times\reali;\reali\right)$.
  \end{enumerate}
  \setcounter{ctmp}{\value{enumi}}
  \noindent Then, for any $r_o \in \L\infty(\reali;\reali)$,
  problem~\eqref{eq:2} admits a {Kru\v zkov} solution.
\end{lemma}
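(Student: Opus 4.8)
The plan is to obtain the solution as a vanishing--viscosity limit for smooth, compactly supported data, and then to propagate existence to arbitrary $\L\infty$ initial data by a density argument built on the finite--speed--of--propagation stability estimate~\eqref{eq:62}.

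First I would treat the case $r_o\in\Cc\infty(\reali;\reali)$. For $\varepsilon>0$, solve the uniformly parabolic problem
\begin{displaymath}
  \partial_t r_\varepsilon + \partial_x\bigl(r_\varepsilon\,w(t,x,r_\varepsilon)\bigr) = \varepsilon\,\partial_{xx}^2 r_\varepsilon,
  \qquad r_\varepsilon(0,\cdot)=r_o,
\end{displaymath}
which by~\ref{item:5} and the local regularity in~\ref{item:3new}, \ref{item:2} admits a unique bounded classical solution on $[0,T]\times\reali$. The central bound is the uniform $\L\infty$ estimate: with $M\coloneqq\max\bigl\{R,\norma{r_o}_{\L\infty(\reali;\reali)}\bigr\}$, assumption~\ref{item:7} forces the flux $\phi(t,x,r)\coloneqq r\,w(t,x,r)$ to satisfy $\partial_x\phi(t,x,r)=0$ for every $\modulo r\ge R$, so the constants $\pm M$ are, respectively, a super- and a sub-solution of the viscous equation and the parabolic comparison principle yields $r_\varepsilon(t,x)\in[-M,M]$ uniformly in $\varepsilon$. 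On this range $\partial_r\phi=w+r\,\partial_r w$ is bounded by~\ref{item:6}; $\partial_x\phi(t,x,r)=r\,\partial_x w(t,x,r)$ vanishes for $\modulo r\ge R$ (whence also $\partial_x w(t,x,\pm R)=0$) and is bounded through~\ref{item:7}--\ref{item:8} by integrating $\partial_{xr}^2 w$ from $\pm R$; $\partial_{xr}^2\phi=\partial_x w+r\,\partial_{xr}^2 w$ is bounded by~\ref{item:8}; and $\partial_{xx}^2\phi(t,x,r)=r\,\partial_{xx}^2 w(t,x,r)$, being continuous by~\ref{item:2}, is bounded on every compact $[0,T]\times[-K,K]\times[-M,M]$. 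Differentiating the viscous equation in $x$, localizing with a smooth cutoff of $[-K,K]$ and running a Gronwall argument — the conservative transport term cancelling after an integration by parts, the cutoff commutator and the source terms being controlled on $[-2K,2K]$ by the bounds just listed — gives, for each $K>0$, a bound on $\tv\bigl(r_\varepsilon(t);[-K,K]\bigr)$ uniform in $\varepsilon$ and $t\in[0,T]$; a $\BVloc$ bound in time then follows from the equation and the local boundedness of the flux. Helly's theorem and a diagonal extraction yield a subsequence converging in $\Lloc1([0,T]\times\reali;\reali)$ and a.e.\ to a function $r$, and passing to the limit in the viscous Kru\v zkov inequalities (the $\L\infty$ bound and dominated convergence on compacta handle the flux terms, while $\varepsilon\,\partial^2_{xx}\modulo{r_\varepsilon-k}\to0$ in $\mathcal{D}'$) shows that $r$ is a Kru\v zkov solution to~\eqref{eq:2} with datum $r_o$; its flux meets the requirements of~\cite[\S~3]{MR267257} exactly as checked in the proof of Lemma~\ref{lem:0}. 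A wavefront tracking construction along the lines of~\cite{MR1816648} would be an alternative here.

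For a general $r_o\in\L\infty(\reali;\reali)$ I would pick $r_{o,n}\in\Cc\infty(\reali;\reali)$ with $r_{o,n}\to r_o$ in $\Lloc1(\reali;\reali)$ and $\norma{r_{o,n}}_{\L\infty(\reali;\reali)}\le\norma{r_o}_{\L\infty(\reali;\reali)}$, and let $r_n$ be the Kru\v zkov solutions from the previous step, all valued in $[-M,M]$ with $M\coloneqq\max\bigl\{R,\norma{r_o}_{\L\infty(\reali;\reali)}\bigr\}$. Since~\ref{item:5} and~\ref{item:3new} hold, Lemma~\ref{lem:0} applies to each pair $(r_n,r_m)$ and~\eqref{eq:62} gives, for every bounded interval $[a,b]$ and a speed $\sigma$ depending only on $M$ and the above flux bounds, $\norma{r_n(t)-r_m(t)}_{\L1([a+\sigma t,\,b-\sigma t];\reali)}\le\norma{r_{o,n}-r_{o,m}}_{\L1([a,b];\reali)}$. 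Hence $(r_n)$ is Cauchy in $\C0\bigl([0,T];\L1(K;\reali)\bigr)$ for every compact $K$, with a limit $r\in\C0\bigl([0,T];\Lloc1(\reali;\reali)\bigr)\cap\L\infty([0,T]\times\reali;\reali)$; a final passage to the limit in the Kru\v zkov inequalities, once more by dominated convergence on compacta, shows that $r$ solves~\eqref{eq:2} in the sense of~\cite[Definition~1]{MR267257} with $r(0,\cdot)=r_o$.

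The hard part will be the uniform--in--$\varepsilon$ compactness estimate in the first step. Because~\ref{item:7} makes the flux affine for $r\le-R$ and constant for $r\ge R$, it need not be genuinely nonlinear, so compensated compactness is unavailable and a strong ($\BVloc$) bound has to be produced by hand; the delicate point is that the explicit $x$--dependence of the flux — entering through the term $r_\varepsilon\,\partial_{xx}^2 w(t,x,r_\varepsilon)$ in the equation for $\partial_x r_\varepsilon$ — carries no globally integrable profile, which is precisely why one localizes in $x$ and why~\ref{item:2}, \ref{item:7} and~\ref{item:8} (confining $\partial_x w$ to $\{\modulo r\le R\}$ and bounding $\partial_{xr}^2 w$, $\partial_{xx}^2 w$) enter here rather than in the existence statement itself.
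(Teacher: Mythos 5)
The paper's own proof is a two-line verification: with $\phi(t,x,r)=r\,w(t,x,r)$, hypotheses~\ref{item:5}, \ref{item:2}, \ref{item:6}, \ref{item:7}, \ref{item:8} are checked against the assumptions (in particular conditions~(4.1)--(4.2)) of Kru\v zkov's existence theorem, and \cite[Theorem~5]{MR267257} is invoked. What you propose is essentially to reprove that theorem by vanishing viscosity plus a density argument; the density half (approximating $r_o\in\L\infty$ by $\Cc\infty$ data and using~\eqref{eq:62} from Lemma~\ref{lem:0} to get a Cauchy sequence in $\C0([0,T];\Lloc1)$) is sound, but the compactness step you yourself flag as ``the hard part'' has a genuine gap.

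Concretely: after differentiating the viscous equation and testing $p_\varepsilon=\partial_x r_\varepsilon$ with a (smoothed) sign times a cutoff $\zeta$ of $[-K,K]$, the terms you call ``cutoff commutator'' are of the form $\int \modulo{\partial_r\phi}\,\modulo{p_\varepsilon}\,\modulo{\zeta'}\,\d{x}$ and $\varepsilon\int \modulo{p_\varepsilon}\,\modulo{\zeta''}\,\d{x}$; they are \emph{not} controlled ``by the bounds just listed'' (coefficient bounds), but by $\norma{p_\varepsilon(t)}_{\L1([-2K,2K];\reali)}$, i.e.\ by the very quantity you are estimating, on a larger set. Since the viscous equation has no finite speed of propagation, a cone/nested-compacta iteration does not close this Gronwall inequality, and the standard repair --- a global weight $e^{-\lambda\modulo{x}}$ --- requires $\sup_{\modulo{q}\le M}\modulo{\partial^2_{xx}w(t,x,q)}$ to be integrable against the weight, which mere continuity (\ref{item:2}) does not give: globally controlling $\partial^2_{xx}w$ is precisely the extra hypothesis~\ref{item:11} that the paper adds later, in Lemma~\ref{lem:2}, to obtain a TV bound, and it is deliberately absent from the existence lemma. (A smaller issue: classical solvability of the viscous problem is asserted under only $\C1$ regularity of $w$; one should mollify the flux as well.) The way to make a self-contained argument work under exactly \ref{item:5}--\ref{item:8} is Kru\v zkov's own: obtain the uniform-in-$\varepsilon$ modulus of continuity in $x$ not by differentiating the equation but by the doubling/contraction estimate applied to $r_\varepsilon(\cdot+h,\cdot)$ versus $r_\varepsilon$, whose error terms involve only $\partial_x w$ and $\partial^2_{xr}w$ (bounded thanks to \ref{item:6}--\ref{item:8}, with \ref{item:7} confining the $x$-dependence to $\modulo{r}\le R$) and never $\partial^2_{xx}w$. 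As written, your first step does not go through; either rework it along those lines or simply verify (4.1)--(4.2) and cite \cite[Theorem~5]{MR267257}, as the paper does.
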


\begin{proofof}{Lemma~\ref{lem:1}}
  Using the notation in~\cite{MR267257} we set
  $\phi (t,x,r) = r \, w(t,x,r)$ and we check that the assumptions
  in~\cite[$4^\circ$ of \S~5]{MR267257} are verified. Indeed: $\phi$
  and $\partial_r \phi$ are continuous by~\ref{item:5};
  $\partial^2_{xr}\phi$ is continuous by~\ref{item:2};
  $\partial^2_{xx} \phi$ is continuous by~\ref{item:2};
  $\partial_x \phi$ and $\partial_r \phi$ are bounded in
  $[0,T] \times \reali \times [-M,M]$ for any $M$ by~\ref{item:6};
  condition~\cite[(4.1)]{MR267257} directly follows;
  condition~\cite[(4.2)]{MR267257} holds
  by~\ref{item:6}--\ref{item:7}--\ref{item:8}. Then,
  \cite[Theorem~5]{MR267257} applies, completing the proof.
\end{proofof}

\begin{lemma}[$\BV{}$ estimate and stability]
  \label{lem:2}
  Fix $r_o \in \BV \left(\reali; [0,R]\right)$. Consider the problems
  \begin{equation}
    \label{eq:3}
    \left\{
      \begin{array}{l}
        \partial_t r + \partial_x \left(r \; w (t,x,r)\right) =0
        \\
        r (0,x) = r_o (x)
      \end{array}
    \right.
    \qquad \mbox{ and } \qquad
    \left\{
      \begin{array}{l}
        \partial_t \widehat r
        +
        \partial_x \left(\widehat r \; \widehat w (t,x,\widehat r)\right) =0
        \\
        \widehat r (0,x) = r_o (x)
      \end{array}
    \right.
  \end{equation}
  with both $w,\widehat w$ satisfying~\ref{item:5}, \ref{item:6},
  \ref{item:7} and~\ref{item:8}.  Moreover, $w$ also satisfies
  \begin{enumerate}[label=\bf(L\arabic*)]
    \setcounter{enumi}{\value{ctmp}}
  \item \label{item:11} For all $q \in [-R,R]$,
    $\int_0^T \int_{\reali} \modulo{\partial^2_{xx} w (t,x,q)} \d{x}
    \d{t} < +\infty$.
  \item \label{item:10}
    $\partial_t w, \partial^2_{t r} w, \partial^2_{t x} w \in
    \L\infty([0,T]\times\reali\times\reali; \reali)$.
  \item \label{item:12}
    $\partial_{xx}^2 w, \partial_{xr}^2 w \in \C0([0,T]\times \reali
    \times \reali;\reali)$.
  \end{enumerate}
  \setcounter{ctmp}{\value{enumi}} Call $r$ the solution
  to~\eqref{eq:3}, left. Then, for all $t \in [0,T]$,
  $r(t) \in \BV{}(\reali; \reali)$ and
  \begin{align}\label{eq:tvest}
    \tv\left(r(t)\right)
    \leq
    & \tv(r_o) e^{\kappa_o t} + R \int_0^t e^{\kappa_o (t-\tau)}
      \int_\reali
      \norma{\partial_{xx}^2 w(\tau, x, \cdot)}_{\L\infty(\reali; \reali)}
      \d{x} \d{\tau}
  \end{align}
  where
  \begin{equation*}
    \kappa_o \leq
    3
    \left(
      \norma{\partial_x w}_{\L\infty([0,T]\times \reali\times\reali;\reali)}
      +
      R \,
      \norma{\partial_{xr}^2 w}_{\L\infty([0,T]\times\reali\times\reali;\reali)}
    \right)
  \end{equation*}
  Additionally, calling $\widehat{r}$ the solution to~\eqref{eq:3},
  right, if $w, \widehat w$ satisfy
  \begin{enumerate}[label=\bf(L\arabic*)]
    \setcounter{enumi}{\value{ctmp}}
  \item \label{item:9}
    $\sup_{t \in [0,T]} \norma{\partial_x w (t, \cdot, \cdot
      )}_{\L1(\reali;\L\infty( \reali;\reali))} +\sup_{t \in [0,T]} \norma{\partial_x \widehat w (t, \cdot, \cdot
      )}_{\L1(\reali;\L\infty( \reali;\reali))} < +\infty$.
  \end{enumerate}%
  \setcounter{ctmp}{\value{enumi}}%
  \begin{equation}
    \label{eq:4}
    \!\!\!
    \norma{r (t) {-} \widehat r (t)}_{\L1 (\reali; \reali)}
    {\leq} t  e^{\kappa_o t}
    \mathcal{C}\left (\tv(r_o), \int_0^t \!\! \int_\reali
      \norma{\partial^2_{xx} w(\tau, x, \cdot)}
      _{\L\infty(\reali; \reali)} \d x \d \tau,R\right)
    \norma{w {-} \widehat w}_{\mathcal{W}}.
  \end{equation}
\end{lemma}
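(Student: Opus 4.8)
The plan is to reduce both estimates to the general theory of scalar balance laws in~\cite{MR2512832}, applied to the conservation form $\partial_t r + \partial_x f (t,x,r) = 0$ with $f (t,x,r) \coloneqq r \, w (t,x,r)$ (respectively $\partial_t \widehat r + \partial_x g (t,x,\widehat r) = 0$ with $g (t,x,r) \coloneqq r \, \widehat w (t,x,r)$), both with vanishing source and with initial datum $r_o$. The solution $r$ to~\eqref{eq:3}, left, is the one provided by Lemma~\ref{lem:1}, whose hypotheses follow here from~\ref{item:5}, \ref{item:6}, \ref{item:7}, \ref{item:8}, \ref{item:10} and~\ref{item:12}, while $\widehat r$ is the solution to~\eqref{eq:3}, right. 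A first observation is that, by~\ref{item:7} and the $\L\infty$ comparison principle for Kru\v zkov solutions (the constants $0$ and $R$ being a sub- and a super-solution, exactly as in the proof of Corollary~\ref{cor:pos}), both $r (t,x)$ and $\widehat r (t,x)$ lie in $[0,R]$ for a.e.~$(t,x) \in [0,T] \times \reali$; hence in all bounds below every supremum and integral in the $r$ variable may be confined to the compact interval $[-R,R]$. On $[0,T] \times \reali \times [-R,R]$ one computes $\partial_x f = r \, \partial_x w$, $\partial^2_{xx} f = r \, \partial^2_{xx} w$ and $\partial^2_{xr} f = \partial_x w + r \, \partial^2_{xr} w$, so that $\norma{\partial^2_{xr} f}_{\L\infty ([0,T]\times\reali\times[-R,R];\reali)} \leq \norma{\partial_x w}_{\L\infty} + R \, \norma{\partial^2_{xr} w}_{\L\infty}$ and $\norma{\partial^2_{xx} f (\tau,x,\cdot)}_{\L\infty (\reali;\reali)} \leq R \, \norma{\partial^2_{xx} w (\tau,x,\cdot)}_{\L\infty (\reali;\reali)}$, the latter being integrable over $[0,T]\times\reali$ by~\ref{item:11}; the analogous identities and bounds hold for $g$ and $\widehat w$.

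Next I would verify that $f$ (and $g$) satisfies the regularity and integrability requirements of the total variation estimate in~\cite{MR2512832}: continuity of $f$ and $\partial_r f$ from~\ref{item:5}, of $\partial^2_{xx} f$ and $\partial^2_{xr} f$ from~\ref{item:12}, the $\L\infty$ bounds on $\partial_x f$ and $\partial_r f$ from~\ref{item:5}, \ref{item:6} and~\ref{item:7}, the $x$-integrability of $\partial^2_{xx} f$ from~\ref{item:11}, and the time-regularity needed for the construction and for comparing the solution across times from~\ref{item:10}. In one space dimension and with zero source, that estimate has the form $\tv (r(t)) \leq \tv (r_o) \, e^{\kappa_o t} + \int_0^t e^{\kappa_o (t - \tau)} \int_\reali \norma{\partial^2_{xx} f (\tau,x,\cdot)}_{\L\infty (\reali;\reali)} \d x \d\tau$ with $\kappa_o = 3 \, \norma{\partial^2_{xr} f}_{\L\infty ([0,T]\times\reali\times[-R,R];\reali)}$, the factor $3$ being $2 n + 1$ for $n = 1$. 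Inserting the bounds from the previous step turns this into exactly~\eqref{eq:tvest} with the stated $\kappa_o$, and in particular shows $r (t) \in \BV (\reali;\reali)$ for every $t \in [0,T]$.

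For the stability estimate~\eqref{eq:4} I would invoke~\cite[Theorem~2.6]{MR2512832} on the dependence of Kru\v zkov solutions on the flux, applied to $r$ (flux $f$) and $\widehat r$ (flux $g$), which share the initial datum $r_o$ and have vanishing sources. That theorem bounds $\norma{r (t) - \widehat r (t)}_{\L1 (\reali;\reali)}$ by $t$ times a constant depending on $\norma{r_o}_{\L\infty (\reali;\reali)} \leq R$, on $\tv (r_o)$ and on the total variation bound for $r$ obtained above --- hence, via~\eqref{eq:tvest}, on $R$, on $\int_0^t \int_\reali \norma{\partial^2_{xx} w (\tau,x,\cdot)}_{\L\infty (\reali;\reali)} \d x \d\tau$ and on $\kappa_o$ --- multiplied by a suitable norm of the flux difference $f - g$. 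Since $f - g = r \, (w - \widehat w)$ with $\modulo{r} \leq R$, this norm of $f - g$ is controlled by $R \, \norma{w - \widehat w}_{\mathcal{W}}$; it is precisely hypothesis~\ref{item:9} that makes the $x$-integral over all of $\reali$ of $\norma{\partial_x (f - g) (\tau, \cdot, \cdot)}_{\L\infty}$, which enters that norm, finite, thereby yielding a global-in-space rather than merely local bound. Absorbing all constants into a single $\mathcal{C} (\cdots)$ gives~\eqref{eq:4}.

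The work here lies not in a new idea but in the careful bookkeeping needed to align the present hypotheses and, above all, the explicit form of the constants in the theorems of~\cite{MR2512832} with the notation of~\eqref{eq:3}: keeping track of how the product structure $f = r \, w$ converts the $\partial^2$-bounds on $w$ into the factor $R$ and the integral of $\norma{\partial^2_{xx} w}_{\L\infty}$, confirming that~\ref{item:7} really does allow one to confine attention to $r \in [-R,R]$ throughout, and checking that~\ref{item:10}, \ref{item:11} and~\ref{item:12} are exactly the hypotheses the total variation and stability theorems of~\cite{MR2512832} require, so that nothing further is used implicitly. This matching of constants, rather than any analytical difficulty, is the delicate point.
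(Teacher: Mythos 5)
Your proposal is correct and takes essentially the same route as the paper: setting $f = r\,w$ and $g = r\,\widehat w$, verifying the hypotheses of~\cite{MR2512832}, and applying its total variation estimate (Theorem~2.5) and flux-stability estimate (Theorem~2.6), with the product structure converting the bounds involving $f$ and $f-g$ into the factor $R$ times the corresponding norms of $w$ and $w-\widehat w$. The only deviation is your preliminary confinement of $r,\widehat r$ to $[0,R]$ via a comparison argument, which the paper does not need: by~\ref{item:7} the relevant derivatives of $w,\widehat w$ vanish for $\modulo{r}\geq R$, so the global-in-$r$ norms already carry the factor $R$ (and for $\widehat r$ the comparison step would in any case require uniqueness-type hypotheses on $\widehat w$ that are not assumed here).
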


In~\eqref{eq:4}, we use the set
\begin{equation}
  \label{eq:21}
  \mathcal{W}
  \coloneqq
  \left\{
    w\colon [0,T]\times \reali \times \reali \rightarrow \reali
    \mbox{ satisfying~\ref{item:5}, \ref{item:6}, \ref{item:7}, \ref{item:9}}
  \right\}
\end{equation}
with norm
\begin{equation}\label{eq:25}
  \!\!\!
  \norma{w}_{\mathcal{W}}
  {\coloneqq}
  \norma{w}_{\L\infty([0,T]\times \reali \times \reali;\reali)}
  {+}
  R \,
  \norma{\partial_r w}_{\L\infty([0,T]\times \reali \times \reali;\reali)}
  {+}
  \sup_{t \in [0,T]}
  \norma{\partial_x w (t, \cdot, \cdot )}_{\L1(\reali;\L\infty( \reali;\reali))}
\end{equation}
where
$\norma{\partial_x w (t, \cdot,
  \cdot)}_{\L1(\reali;\L\infty(\reali;\reali))} = \int_{\reali}
\esssup_{q \in \reali} \modulo{\partial_x w (t, x, q)} \d{x}$.

\begin{proof}
  Set $f (t,x,r) = r \, w (t,x,r)$ and
  $g (t,x,\widehat r) = \widehat r \, \widehat w (t,x,\widehat r)$.
  We aim at applying~\cite[Theorem~2.6]{MR2512832}. To this aim, we
  specialize to the present case the key assumptions~\textbf{(H1)},
  \textbf{(H2)} and~\textbf{(H3)} therein, as relaxed in the remark
  before~\cite[Theorem~2.3]{MR2512832}.
  \begin{description}
  \item[(H1)] $f,g \in \C1 \left([0,T]\times\reali\times\reali;\reali\right)$;\\
    $\partial^2_{xr}f , \, \partial^2_{xx}f, \, \partial^2_{x\widehat
      r}g , \, \partial^2_{xx}g \in
    \C0\left([0,T]\times\reali\times\reali;\reali\right)$;\\
    $\partial_x f, \, \partial_r f, \, \partial^2_{xr} f, \,
    \partial_x g, \, \partial_{\widehat r} g, \,
    \partial^2_{x\widehat{r}} g \in
    \L\infty\left([0,T]\times\reali\times\reali;\reali\right)$.

  \item[(H2)] $f \in \C1 \left([0,T]\times\reali\times\reali;\reali\right)$;\\
      $\partial^2_{xr}f , \, \partial^2_{tx}f, \, \partial^2_{tr}f
      \in \L\infty\left([0,T]\times\reali\times\reali;\reali\right)$;\\
      $\int_0^T \int_{\reali} \norma{\partial^2_{xx}f
        (t,x,\cdot)}_{\L\infty (\reali;\reali)} \d{x}\d{t} < +\infty$.
\item[(H3)] $f-g \in \C1 \left([0,T]\times\reali\times\reali;\reali\right)$;\\
      $\partial_r(f-g) \in \L\infty
      \left([0,T]\times\reali\times\reali;\reali\right)$;\\
      $\int_0^T \int_{\reali} \norma{\partial_x f (t,x,\cdot) -
        \partial_x g (t,x,\cdot)}_{\L\infty (\reali;\reali)}
      \d{x}\d{t} < +\infty$.
    \end{description}

    \noindent Requirement~\textbf{(H1)} holds: $f$ and $g$ are of
    class $\C1$ by~\ref{item:5}; the existence and continuity of the
    second derivatives follow from~\ref{item:5} and~\ref{item:12};
    $\partial_rf$ and $\partial_{\widehat r} g$ are in
    $\L\infty \left([0,T]\times \reali \times \reali; \reali\right)$
    by~\ref{item:5}, \ref{item:6} and~\ref{item:7};
    $\partial^2_{xr} f$ and $\partial^2_{x \widehat r} g$ are in
    $\L\infty \left([0,T]\times\reali\times\reali; \reali\right)$ due
    to~\ref{item:6}, \ref{item:7} and~\ref{item:8}; $\partial_xf$ and
    $\partial_x g$ are in
    $\L\infty \left([0,T]\times\reali\times\reali; \reali\right)$ due
    to~\ref{item:6} and~\ref{item:7}.

    Requirement~\textbf{(H2)} holds: $\partial^2_{tr} f$ and
    $\partial^2_{t x} f$ are of class $\L\infty$ by~\ref{item:7}
    and~\ref{item:10}; finally
    $\int_0^T \int_{\reali} \norma{\partial^2_{xx} f
      (t,x,\cdot)}_{\L\infty (\reali; \reali)} \d{x} \d{t} \leq R
    \int_0^T \int_{\reali} \norma{\partial^2_{xx} w
      (t,x,\cdot)}_{\L\infty (\reali; \reali)} \d{x} \d{t} < +\infty$
    by~\ref{item:7}, \ref{item:11}.

    Requirement~\textbf{(H3)} holds:
    $f-g \in \C1 \left([0,T]\times\reali\times\reali; \reali\right)$
    by~\ref{item:5} while $\partial_r(f-g)$ is in
    $\L\infty\left([0,T]\times\reali\times\reali; \reali\right)$ due
    to~\ref{item:5}, \ref{item:6}, \ref{item:7};
    $\int_0^T \int_{\reali} \norma{\partial_x f (t,x,\cdot) -
      \partial_x g (t,x,\cdot)}_{\L\infty (\reali; \reali)} \d{x}
    \d{t}$ $< +\infty$ by~\ref{item:7} and~\ref{item:9}.  With reference
    to the notation in~\cite[Formula~(2.4)]{MR2512832}, introduce
    \begin{equation*}
      \kappa_o
      \coloneqq
      3
      \norma{\partial_{xr}^2 f}_{\L\infty([0,T] \times \reali \times\reali; \reali)}
      \leq
      3\left(\norma{\partial_x w}_{\L\infty ([0,T]\times\reali\times\reali;
          \reali)}
        + R \,
        \norma{\partial^2_{xr} w}_{\L\infty
          ([0,T]\times\reali\times\reali; \reali)}\right).
    \end{equation*}
    Since $f$ satisfies~\textbf{(H1)} and~\textbf{(H2)}, we
    apply~\cite[Theorem 2.5]{MR2512832}, which reads for all
    $t \in [0,T]$
    \begin{align*}
      \tv\left(r(t)\right)
      \leq
      & \tv(r_o) e^{\kappa_o t} +
        \int_0^t e^{\kappa_o (t-\tau)}\int_\reali \norma{\partial_{xx}^2 f(\tau, x, \cdot)}_{\L\infty(\reali; \reali)} \d{x} \d{\tau}
      \\
      \leq
      & \tv(r_o) e^{\kappa_o t} + R \int_0^t e^{\kappa_o (t-\tau)}\int_\reali
        \norma{\partial_{xx}^2 w(\tau, x, \cdot)}_{\L\infty(\reali; \reali)}
        \d{x} \d{\tau},
    \end{align*}
    proving~\eqref{eq:tvest}.  Recall for later use the estimate
    \begin{equation}
      \label{eq:6}
      e^a -1 \leq a \,e^a
      \qquad \forall\, a \geq 0 \,.
    \end{equation}
    With reference to the notation in~\cite[Theorem~2.6]{MR2512832},
    we have
    \begin{flalign*}
      \kappa
      & = 2 \norma{\partial^2_{xr} f}_{\L\infty
               ([0,T]\times\reali\times\reali; \reali)}
      \\
    \dfrac{e^{\kappa_o t} - e^{\kappa t}}{\kappa_o - \kappa} & =
    \dfrac{e^{\norma{\partial^2_{xr}
          f}_{\L\infty([0,T]\times\reali\times\reali;
          \reali)}t}-1}{\norma{\partial^2_{xr}
        f}_{\L\infty([0,T]\times\reali\times\reali; \reali)}} \;
    e^{2\norma{\partial^2_{xr}
        f}_{\L\infty([0,T]\times\reali\times\reali; \reali)}t}
    \\
    & \leq e^{ 3 \norma{\partial^2_{xr}
        f}_{\L\infty([0,T]\times\reali\times\reali; \reali)}t } t &
    [\mbox{By~\eqref{eq:6}}]
    \\
    & = e^{\kappa_o \,t} t.
  \end{flalign*}
We can now apply~\cite[Theorem~2.6]{MR2512832} and obtain the estimate
\begin{eqnarray*}
  &
  & \norma{r(t) - \widehat r(t)}_{\L1(\reali;\reali)}
  \\
  & \leq
  & \dfrac{e^{\kappa_o t} - e^{\kappa t}}{\kappa_o - \kappa} \tv(r_o)
    \norma{\partial_r(f-g)}_{\L\infty([0,T]\times \reali\times\reali;\reali)}
  \\
  &
  & + \left( \int_0^t
    \dfrac{e^{\kappa_o (t-\tau)} - e^{\kappa (t-\tau)}}{\kappa_o - \kappa}
    \int_\reali \norma{\partial^2_{xx}f(\tau,x, \cdot)}_{\L\infty(\reali;\reali)}
    \, \d x \d \tau \right)
    \norma{\partial_r(f-g)}_{\L\infty([0,T]\times \reali\times\reali;\reali)}
  \\
  &
  & + \int_0^t e^{\kappa (t-\tau)} \int_\reali
    \norma{\partial_x(f-g)(\tau,x,\cdot)}_{\reali;\reali)} \d x \d \tau
  \\
  & \leq
  & t \, e^{\kappa_o t} \tv(r_o) \left(
    \norma{w - \widehat w}_{\L\infty([0,T]\times\reali\times\reali; \reali)}
    + R
    \norma{\partial_rw - \partial_r\widehat w}_{\L\infty([0,T]\times\reali\times\reali; \reali)} \right)
  \\
  &
  & + R \, t \, e^{\kappa_o t} \left(
    \int_0^t \int_\reali
    \norma{\partial^2_{xx} w(\tau, x, \cdot)}_{\L\infty(\reali; \reali)}
    \d x \d \tau
    \right)
  \\
  &
  & \quad \times
    \left( \norma{w - \widehat w}_{\L\infty([0,T]\times\reali\times\reali; \reali)}
    + R
    \norma{\partial_rw - \partial_r\widehat w}_{\L\infty([0,T]\times\reali\times\reali; \reali)} \right)
  \\
  &
  & + R \, e^{\kappa t} \int_0^t
    \int_\reali
    \norma{\partial_x w(\tau, x, \cdot)
    - \partial_x \widehat w(\tau, x, \cdot)}_{\L\infty(\reali; \reali)}
    \d x \d \tau
  \\
  \ghost{
  & \leq
  & t \, e^{\kappa_o t} \, \tv(r_o) \left(
    \norma{w - \widehat w}_{\L\infty([0,T]\times\reali\times\reali; \reali)}
    + R
    \norma{\partial_rw - \partial_r\widehat w}_{\L\infty([0,T]\times\reali\times\reali; \reali)} \right)
  \\
  &
  & + R \, t \, e^{\kappa_o t} \left(
    \int_0^t \int_\reali
    \norma{\partial^2_{xx} w(\tau, x, \cdot)}_{\L\infty(\reali; \reali)}
    \d x \d \tau
    \right)
  \\
  &
  & \quad \times
    \left( \norma{w - \widehat w}_{\L\infty([0,T]\times\reali\times\reali; \reali)}
    +
    R \,
    \norma{\partial_rw - \partial_r\widehat w}_{\L\infty([0,T]\times\reali\times\reali; \reali)} \right)
  \\
  &
  & + R  \, e^{\kappa t}
    \int_0^t
    \norma{ \partial_x w(\tau, \cdot, \cdot)
    - \partial_x \widehat w(\tau,  \cdot, \cdot)}_{\L1(\reali; \L\infty(\reali; \reali)))} \d \tau
  \\
  }
  & \leq
  & t \, e^{\kappa_o t} \left(   \tv(r_o)
    + R \left(\int_0^t
    \int_\reali
    \norma{\partial^2_{xx} w(\tau, x, \cdot)}_{\L\infty(\reali; \reali)}
    \d x \d \tau\right) \right)
    \norma{w - \widehat w}_{\L\infty([0,T]\times\reali\times\reali; \reali)}
  \\
  &
  & + R \, t \, e^{\kappa_o t}  \left(  \tv(r_o)
    +  R \left(\int_0^t
    \int_\reali
    \norma{\partial^2_{xx} w(\tau, x, \cdot)}_{\L\infty(\reali; \reali)}
    \d x \d \tau\right) \right)
  \\
  && \quad \times
     \norma{\partial_r w - \partial_r\widehat w}_{\L\infty([0,T]\times\reali\times\reali; \reali)}
  \\
  &
  & + R t \, e^{\kappa t}
    \sup_{t \in [0,T]}
    \norma{ \partial_x w(\tau,  \cdot, \cdot)
    - \partial_x \widehat w(\tau,  \cdot, \cdot)}_{\L1(\reali; \L\infty(\reali; \reali)))}
  \\
  & \leq
  & t \, e^{\kappa_o t}  \left(  \tv(r_o) +  R \left(\int_0^t
    \int_\reali
    \norma{\partial^2_{xx} w(\tau, x, \cdot)}_{\L\infty(\reali; \reali)}
    \d x \d \tau\right) \right)
  \\
  &
  & \times \left( \norma{w - \widehat w}_{\L\infty([0,T]\times\reali\times\reali; \reali)}
    + R \,
    \norma{\partial_r w - \partial_r\widehat w}_{\L\infty([0,T]\times\reali\times\reali; \reali)}
    \right)
  \\
  &
  & + R t \, e^{\kappa t}
    \sup_{t \in [0,T]}
    \norma{ \partial_x w(\tau,  \cdot, \cdot) - \partial_x \widehat w(\tau,  \cdot, \cdot)}_{\L1(\reali; \L\infty(\reali; \reali)))}
  \\
  & \leq
  & t \, e^{\kappa_o t} \max \left\{
    \left(
    \tv(r_o)
    +
    R \left(\int_0^t
    \int_\reali
    \norma{\partial^2_{xx} w(\tau, x, \cdot)}_{\L\infty(\reali; \reali)}
    \d x \d \tau\right) \right), R
    \right\}
    \norma{w - \widehat w}_{\mathcal{W}} \,.
\end{eqnarray*}
The proof is completed.
\end{proof}

Remark that a straightforward consequence of \Cref{lem:lei}, \Cref{lem:2} and~\eqref{eq:tvest} is that for all $t \in [0,T]$
\begin{equation}
  \label{eq:51}
    \norma{r (t)}_{\L\infty (\reali;\reali)}
    +
    \tv\left(r (t)\right)
    \leq
    \mathcal{C}\left(
      \begin{array}{c}
        \norma{\partial_x w}_{\L\infty([0,T]\times \reali\times\reali;\reali)}
        ,
        \norma{\partial_{xr}^2 w}_{\L\infty([0,T]\times\reali\times\reali;\reali)}
        ,
        \\
        \int_0^T \int_\reali
          \norma{\partial_{xx}^2 w(\tau, x, \cdot)}_{\L\infty(\reali; \reali)}
          \d{x} \d{\tau}
          ,
          R
          ,
          \tv (r_o)
          ,
          T
      \end{array}
    \right)
\end{equation}
where $R$ is as in~\ref{item:7}.

\begin{lemma}[$\L1$ continuity in time]
  \label{lem:continuity}
  Assume~\ref{item:5}--\ref{item:9}.  Then, the
  solution $r$ built in Lemma~\ref{lem:0} and \Cref{lem:1} belongs to
  $\C0 \left([0,T]; \L1 (\reali; \reali)\right)$. Actually,
  $r$ is locally $\L1$ Lipschitz continuous in time, i.e., calling $K$
  the constant in the right hand side of~\eqref{eq:51}, for all
  $t_1,t_2 \in [0,T]$
  \begin{eqnarray}
    \label{eq:19}
    \norma{r (t_2) - r (t_1)}_{\L1 (\reali;\reali)}
    & \leq
    & K
      \left(
      \sup_{t \in [0,T]}
      \norma{\partial_x w (t, \cdot, \cdot)}_{\L1 (\reali; \L\infty (\reali;\reali))}
      \right)       \modulo{t_2-t_1}
    \\
    \nonumber
    &
    & + K \left(
      + K\,\norma{\partial_r w}_{\L\infty ([0,T]\times\reali\times\reali; \reali)}
      + \norma{w}_{\L\infty ([0,T]\times\reali\times\reali;\reali)}
      \right)
      \modulo{t_2-t_1}\,.
  \end{eqnarray}
\end{lemma}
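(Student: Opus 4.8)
The plan is to read the time increment $r (t_2) - r (t_1)$ off the weak formulation of the conservation law and to control it through the total variation, in the space variable, of the flux $x \mapsto r (t,x)\,w\left(t,x,r (t,x)\right)$. Assume throughout $t_1 < t_2$. The starting point is that, by \Cref{lem:2} and~\eqref{eq:51}, for every $t \in [0,T]$ one has $r (t) \in \BV{} (\reali;\reali)$ with $\norma{r (t)}_{\L\infty (\reali;\reali)} \leq K$ and $\tv\left(r (t)\right) \leq K$, where $K$ is exactly the constant in the right hand side of~\eqref{eq:51}.

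\emph{Step~1 --- an integral identity.} Since a {Kru\v zkov} solution to~\eqref{eq:2} is in particular a distributional solution, I would insert in the weak formulation the test function $\varphi (t,x) = \psi (x)\, \theta_\epsilon (t)$, with $\psi \in \Cc\infty (\reali;\reali)$ and $\theta_\epsilon \in \Lip (\reali;[0,1])$ the piecewise affine function equal to $1$ on $[t_1+\epsilon,t_2-\epsilon]$ and vanishing outside $\mathopen]t_1,t_2\mathclose[$. Letting $\epsilon \to 0$ and invoking the Lebesgue differentiation theorem for the map $t \mapsto \int_\reali r (t,x)\,\psi (x)\d x \in \L\infty ([0,T];\reali)$ gives, for a.e.~$t_1 < t_2$ and all $\psi \in \Cc\infty (\reali;\reali)$, the identity $\int_\reali \left(r (t_2,x) - r (t_1,x)\right)\psi (x)\d x = \int_{t_1}^{t_2}\!\int_\reali r (t,x)\,w\left(t,x,r (t,x)\right)\psi' (x)\d x\d t$.

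\emph{Step~2 --- the flux is $\BV{}$ with a quantitative bound, and conclusion.} Writing $g (t,x) \coloneqq r (t,x)\,w\left(t,x,r (t,x)\right)$, for any partition $x_0 < \cdots < x_N$ I would split each increment $g (t,x_{j+1}) - g (t,x_j)$ into one term bounded by $\norma{w}_{\L\infty}\,\tv\left(r (t)\right)$ and one term bounded by $\norma{r (t)}_{\L\infty}$ times $\norma{\partial_r w}_{\L\infty}\,\tv\left(r (t)\right) + \int_\reali \esssup_q \modulo{\partial_x w (t,x,q)}\d x$, which only requires $w \in \C1$ (\ref{item:5}), $\partial_x w, \partial_r w \in \L\infty$ (\ref{item:6}) and~\ref{item:9}. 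Using $\norma{r (t)}_{\L\infty (\reali;\reali)} \leq K$ and $\tv\left(r (t)\right) \leq K$ this yields $\tv\left(g (t)\right) \leq K\,\norma{w}_{\L\infty} + K^2\,\norma{\partial_r w}_{\L\infty} + K\,\norma{\partial_x w (t,\cdot,\cdot)}_{\L1 (\reali;\L\infty (\reali;\reali))}$; in particular $g (t) \in \BV{} (\reali;\reali)$. Integrating by parts in the identity of Step~1 then gives $\modulo{\int_\reali \left(r (t_2,x) - r (t_1,x)\right)\psi (x)\d x} \leq \norma{\psi}_{\L\infty}\int_{t_1}^{t_2}\tv\left(g (t)\right)\d t$, and taking the supremum over $\psi \in \Cc\infty (\reali;\reali)$ with $\norma{\psi}_{\L\infty (\reali;\reali)} \leq 1$ (which also forces $r (t_2) - r (t_1) \in \L1 (\reali;\reali)$), together with $\sup_{t\in[0,T]}\norma{\partial_x w (t,\cdot,\cdot)}_{\L1 (\reali;\L\infty (\reali;\reali))} < +\infty$ from~\ref{item:9}, produces~\eqref{eq:19} for a.e.~$t_1 < t_2$. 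Finally, a Lipschitz map with values in $\L1 (\reali;\reali)$ defined on a full measure set of times extends uniquely to a map in $\C0 \left([0,T];\L1 (\reali;\reali)\right)$ coinciding a.e.\ with $r$; selecting this representative makes~\eqref{eq:19} hold for all $t_1, t_2$ and gives the asserted continuity.

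The main obstacle is Step~2: establishing that $x \mapsto r (t,x)\,w\left(t,x,r (t,x)\right)$ is $\BV{}$ with the stated bound amounts to controlling the composition of the $\BV{}$ slice $r (t)$ with the map $w$, which is Lipschitz in $(x,r)$ on bounded sets; carrying this out directly from the definition of $\tv$ --- as sketched above --- rather than appealing to a $\BV{}$ chain rule keeps the argument self-contained and produces exactly the three quantities appearing in~\eqref{eq:19}. A secondary, purely technical, point is the careful justification of the $\epsilon \to 0$ limit in Step~1 and of the ensuing ``a.e.~$t_1 < t_2$'', which is harmless since continuity is restored at the end of Step~2.
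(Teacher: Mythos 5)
Your proposal is correct and follows essentially the same route as the paper: the time increment is expressed through the weak formulation tested against spatial test functions, bounded by the time integral of the spatial total variation of the flux $x \mapsto r(t,x)\,w\left(t,x,r(t,x)\right)$, and that variation is split into exactly the three contributions appearing in~\eqref{eq:19} using the bounds~\eqref{eq:51}. The only differences are cosmetic: you derive the integral identity via a time cutoff and Lebesgue points (restoring all $t_1,t_2$ by continuity at the end) instead of subtracting the two weak formulations at $t_1$ and $t_2$ as the paper does, and you estimate $\tv$ of the composed flux by hand from the definition rather than invoking the product/chain rules for $\BV{}$ functions cited in the paper.
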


Under conditions~\ref{item:6}--\ref{item:10} and~\ref{item:9}, the
{Kru\v zkov} solution to~\eqref{eq:2} belongs to the space
$\C0\left([0,T];\Lloc{1}(\reali;\reali)\right)$
by~\cite[Remark~2.4]{MR2512832}. We here prove that we also have the  $\L1$ Lipschitz continuity, locally in time.

\begin{proofof}{Lemma~\ref{lem:continuity}}
  We follow the lines of~\cite[Theorem~4.3.1]{MR3468916}. Since $r$ is
  a distributional solution to~\eqref{eq:2}, for any
  $t_1,t_2 \in [0,T]$ with $t_1 < t_2$,
  $\phi \in \C1\left([0,T];\reali\right)$ with $\phi (t) = 1$ for all
  $t \in [0,t_2]$, $\psi \in \Cc1 (\reali;\reali)$ with
  $\norma{\psi}_{\L\infty (\reali;\reali)} \leq 1$, we have for
  $i=1,2$
  \begin{eqnarray*}
    \int_{t_i}^T \int_{\reali} r (t,x) \, \partial_t \phi (t) \, \psi (x) \d{x} \d{t}
    +
    \int_{t_i}^T \int_{\reali}
    r (t,x) \, w\left(t,x,r (t,x)\right) \, \phi (t) \, \partial_x \psi (x)
    \d{x} \d{t}
    \\
    +
    \int_{\reali} r (t_i,x) \, \phi (t_i)\, \psi (x)\d{x}
    & =
    & 0 \,.
  \end{eqnarray*}
  Subtracting these expressions and using~\ref{item:9}, we get
  \begin{flalign*}
    & \int_{\reali} \left(r (t_2,x) - r (t_1,x)\right) \psi (x) \d{x}
    \\
    =
    & \int_{t_1}^{t_2} \int_{\reali} r (t,x) \, w\left(t,x,r (t,x)\right)
      \partial_x \psi (x) \d{x} \d{t}
    \\
    \leq
    &   \int_{t_1}^{t_2}
      \tv\left(r (t,\cdot) \, w\left(t, \cdot, r (t, \cdot)\right)\right)
      \d{t}
    & \mbox{\cite[Def.~3.4]{MR1857292}}
    \\
    \leq
    & \int_{t_1}^{t_2}
      \left(
      \norma{r (t)}_{\L\infty (\reali;\reali)}
      \tv\left(w \left(t,\cdot,r (t,\cdot)\right)\right)
      +
      \tv\left(r (t)\right)
      \norma{w}_{\L\infty ([0,T]\times\reali\times\reali;\reali)}
      \right) \d{t}
    & \mbox{\cite[Ex.~3.17]{MR1857292}}
    \\
    \leq
    & \int_{t_1}^{t_2}
      K \left(
      \norma{\partial_x w (t, \cdot, \cdot)}_{\L1 (\reali; \L\infty (\reali;\reali))}
      +
      \norma{\partial_r w}_{\L\infty ([0,T]\times\reali\times\reali,\reali)} \,K
      \right) \d{t}
    & [\mbox{By~\eqref{eq:51}}]
    \\
    & + \int_{t_1}^{t_2} K\, \norma{w}_{\L\infty ([0,T]\times\reali\times\reali;\reali)}
      \d{t} \,.
    & [\mbox{By~\eqref{eq:51}}]
  \end{flalign*}
  Passing to the supremum over all $\psi \in \Cc1 (\reali; \reali)$
  with $\modulo{\psi (x)} \leq 1$ the
  proof is completed.
\end{proofof}

\begin{lemma}
  \label{lem:6} Assume~\ref{item:21}.
  Suppose
  \begin{enumerate}[label=\bf(V\arabic*)]
  \item \label{item:14} $\rho \in \C1([0,T] \times \reali ; \reali^k)$.
  \item \label{item:40}
    $\rho \in \C0 \left([0,T]; \W11(\reali; \reali^k)\right)$.
  \item \label{item:19}
    $\partial_{xx}^2 \rho \in (\C0 \cap \L1) \left([0,T] \times \reali;\reali^k\right)$.
  \item \label{item:16}
    $\partial_t \rho, \partial_{tx}^2 \rho \in
    \L\infty\left([0,T] \times \reali;\reali^k\right)$.
  \end{enumerate}
  Then, the map $w(t,x,r) \coloneqq v_L\left(\Sigma \rho(t,x) + r\right)$
  satisfies all the requirements~\ref{item:5} to~\ref{item:9} where the constant $R$ in~\ref{item:7} can be chosen bigger than $R_L + \norma{\rho}_{\L\infty([0,T] \times \reali;\reali^k)}$.
\end{lemma}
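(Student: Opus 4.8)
The plan is to differentiate $w(t,x,r)=v_L\!\left(\Sigma\rho(t,x)+r\right)$ by the chain rule and to read off each of~\ref{item:5}--\ref{item:9} from the regularity of $\Sigma\rho$ furnished by~\ref{item:14}--\ref{item:16}, together with the observation that, by~\ref{item:21}, the functions $v_L$, $v_L'$, $v_L''$ are globally bounded (indeed $v_L'=v_L''=0$ off the compact interval $[0,R_L]$). Writing $S\coloneqq\Sigma\rho=\sum_{j=1}^k\rho^j$, one computes $\partial_r w=v_L'(S+r)$, $\partial_x w=v_L'(S+r)\,\partial_x S$, $\partial_t w=v_L'(S+r)\,\partial_t S$, $\partial^2_{rr}w=v_L''(S+r)$, $\partial^2_{xr}w=v_L''(S+r)\,\partial_x S$, $\partial^2_{tr}w=v_L''(S+r)\,\partial_t S$, $\partial^2_{xx}w=v_L''(S+r)\,(\partial_x S)^2+v_L'(S+r)\,\partial^2_{xx}S$ and $\partial^2_{tx}w=v_L''(S+r)\,\partial_t S\,\partial_x S+v_L'(S+r)\,\partial^2_{tx}S$. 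By~\ref{item:14}--\ref{item:16} the scalar function $S$ inherits $S\in\C1([0,T]\times\reali;\reali)$, $S\in\C0([0,T];\W11(\reali;\reali))$, $\partial^2_{xx}S\in(\C0\cap\L1)([0,T]\times\reali;\reali)$ and $\partial_t S,\partial^2_{tx}S\in\L\infty([0,T]\times\reali;\reali)$; moreover, via $\W11(\reali;\reali^k)\hookrightarrow\L\infty$, one gets $\rho\in\C0([0,T];\L\infty(\reali;\reali^k))$, so $\norma{\rho}_{\L\infty([0,T]\times\reali;\reali^k)}$ is finite and the constant $R\coloneqq R_L+\norma{\rho}_{\L\infty([0,T]\times\reali;\reali^k)}$ of~\ref{item:7} is well defined, and likewise $\sup_{t\in[0,T]}\norma{\partial_x\rho(t)}_{\L1(\reali;\reali^k)}<+\infty$.

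The only substantial point is the \emph{a priori} bound $\partial_x\rho\in\L\infty\!\left([0,T]\times\reali;\reali^k\right)$, which is not directly listed among~\ref{item:14}--\ref{item:16}. For a.e.~$t$, \ref{item:19} gives $\partial^2_{xx}\rho(t,\cdot)\in\L1(\reali;\reali^k)$, hence $\partial_x\rho(t,\cdot)\in\W11(\reali;\reali^k)\subseteq\cBV{}(\reali;\reali^k)$, so $\norma{\partial_x\rho(t)}_{\L\infty(\reali;\reali^k)}\leq\frac12\,\norma{\partial^2_{xx}\rho(t)}_{\L1(\reali;\reali^k)}<+\infty$ by Lemma~\ref{lem:lei}; fix one such $t_o$. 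Since $\partial^2_{tx}\rho\in\L\infty$ by~\ref{item:16}, the map $t\mapsto\partial_x\rho(t,x)$ is Lipschitz in $t$ uniformly in $x$ (classically, after invoking the continuity~\ref{item:14}), so $\norma{\partial_x\rho(t)}_{\L\infty(\reali;\reali^k)}\leq\norma{\partial_x\rho(t_o)}_{\L\infty(\reali;\reali^k)}+T\,\norma{\partial^2_{tx}\rho}_{\L\infty([0,T]\times\reali;\reali^k)}$ for every $t\in[0,T]$, which is the claimed bound. This is the one place where~\ref{item:19} and~\ref{item:16} genuinely interact, and I expect it to be the only nontrivial step.

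With these facts the verification is routine bookkeeping with the formulas above. Requirement~\ref{item:5} holds since $w$ is the composition of the $\C1$ maps $v_L$ and $(t,x,r)\mapsto S(t,x)+r$, its first partials being products of continuous functions; its range lies in $[0,W]$ with $W\coloneqq\norma{v_L}_{\L\infty(\reali;\reali)}=V_L$ (as $0\leq v_L\leq V_L$ by~\ref{item:21}). For~\ref{item:7} (and the constant $W$ in~\ref{item:6} and~\ref{item:5}): if $r\geq R$ then $S(t,x)+r\geq r-\norma{\rho}_{\L\infty([0,T]\times\reali;\reali^k)}\geq R_L$, whence $w=0$; if $r\leq -R$ then $S(t,x)+r\leq\norma{\rho}_{\L\infty([0,T]\times\reali;\reali^k)}-R=-R_L\leq0$, whence $w=V_L=W$. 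The boundedness conditions~\ref{item:3new}, \ref{item:3newnew}, \ref{item:6}, \ref{item:8}, \ref{item:10} follow by estimating the displayed partials through $\norma{v_L'}_{\L\infty}$, $\norma{v_L''}_{\L\infty}$ and the $\L\infty$ bounds on $\partial_x\rho$ (second paragraph), $\partial_t\rho$ and $\partial^2_{tx}\rho$ (from~\ref{item:16}); e.g.~$\modulo{\partial^2_{tx}w}\leq\norma{v_L''}_{\L\infty}\norma{\partial_t\rho}_{\L\infty}\norma{\partial_x\rho}_{\L\infty}+\norma{v_L'}_{\L\infty}\norma{\partial^2_{tx}\rho}_{\L\infty}$. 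The continuity conditions~\ref{item:2} and~\ref{item:12} hold because $v_L',v_L''\in\C0$ and $\partial_x S,\partial^2_{xx}S\in\C0$ by~\ref{item:14} and~\ref{item:19}. For the integral bound~\ref{item:11} one has $\modulo{\partial^2_{xx}w(t,x,q)}\leq\norma{v_L''}_{\L\infty}(\partial_x S)^2+\norma{v_L'}_{\L\infty}\modulo{\partial^2_{xx}S}$, with $\int_0^T\!\!\int_\reali(\partial_x S)^2\,\d{x}\,\d{t}\leq\norma{\partial_x\rho}_{\L\infty}\,T\sup_{t}\norma{\partial_x\rho(t)}_{\L1}<+\infty$ and $\int_0^T\!\!\int_\reali\modulo{\partial^2_{xx}S}\,\d{x}\,\d{t}<+\infty$ by~\ref{item:19}. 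Finally, for~\ref{item:9}, as $q$ ranges over $\reali$ the argument $S(t,x)+q$ ranges over $\reali$, so $\esssup_{q\in\reali}\modulo{\partial_x w(t,x,q)}=\norma{v_L'}_{\L\infty(\reali;\reali)}\,\modulo{\partial_x S(t,x)}$, whence $\norma{\partial_x w(t,\cdot,\cdot)}_{\L1(\reali;\L\infty(\reali;\reali))}=\norma{v_L'}_{\L\infty}\,\norma{\partial_x S(t)}_{\L1(\reali;\reali)}\leq\norma{v_L'}_{\L\infty}\norma{\partial_x\rho(t)}_{\L1(\reali;\reali^k)}$, bounded uniformly in $t\in[0,T]$ by the first paragraph.
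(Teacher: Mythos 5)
Your proof is correct and follows essentially the same route as the paper: chain-rule formulas for the derivatives of $w$, the choice $R = R_L + \norma{\rho}_{\L\infty([0,T]\times\reali;\reali^k)}$ for~\ref{item:7}, and the key deduced bound $\partial_x\rho\in\L\infty([0,T]\times\reali;\reali^k)$ obtained by combining \ref{item:40}--\ref{item:19} (so that $\partial_x\rho(t)\in\W11$ in $x$) with \ref{item:16} to make it uniform in time. The only difference is that you spell out this last step explicitly (via Lemma~\ref{lem:lei} and Lipschitz propagation in $t$), whereas the paper states the implication without detail; the remaining verifications match the paper's bookkeeping.
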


\begin{proof}
  We group the different requirements as follows.
  \begin{description}
  \item[\ref{item:5} and~\ref{item:3newnew}:] Compute
    \begin{displaymath}
      \begin{array}{rcl}
        \partial_t w(t,x,r)
        & =
        & v_L'\left(\Sigma\rho(t,x)+r\right)\; \Sigma\partial_t\rho(t,x)
        \\
        \partial_x w(t,x,r)
        & =
        & v_L'\left(\Sigma\rho(t,x)+r\right)\; \Sigma\partial_x\rho(t,x)
      \end{array}
      \quad \mbox{ and } \quad
      \partial_r w(t,x,r)
      =
      v_L'\left(\Sigma\rho(t,x)+r\right) \,,
    \end{displaymath}
    so that~\ref{item:21} and~\ref{item:14} imply~\ref{item:5},
    while~\ref{item:21} implies~\ref{item:3newnew}.

  \item[\ref{item:3new}:] Direct computations show that
    $\partial^2_{tr} w$ and $\partial^2_{xr} w$ involve only first and
    second derivatives of $v_L$ and first derivatives of
    $\rho$. Hence, \ref{item:21} and~\ref{item:14} ensure that
    $\partial^2_{tr} w$ and $\partial^2_{xr} w$ are locally bounded.

  \item[\ref{item:2}:] Assumptions~\ref{item:21}, \ref{item:14}
    and~\ref{item:19} clearly imply~\ref{item:2}.

  \item[\ref{item:6}, \ref{item:8}, and~\ref{item:10}:] {Note that as
      $\partial_x \rho \in \L1\left([0,T]; \W11(\reali;
        \reali^k)\right)$ by~\ref{item:40}
      and~\ref{item:19},~\ref{item:16} implies that
      $\partial_x\rho \in \L\infty([0,T] \times \reali; \reali^k)$.
      Then, assumptions~\ref{item:21} and~\ref{item:16} imply that
      $\partial_t w$, $\partial_x w$, $\partial_r w$,
      $\partial^2_{x r} w$, $\partial^2_{t r} w$, and
      $\partial^2_{tx} w$ belong to
      $\L\infty\left([0, T] \times \reali \times \reali;
        \reali\right)$.}

  \item[\ref{item:7}:] { By~\ref{item:40} define
      $M = \norma{\rho}_{\L\infty\left([0, T] \times \reali; \reali^k
        \right)}$.  If $r \ge R_L + M$, then
      $r + \Sigma\rho(t,x) \ge r - M \ge R_L$ and so,
      by~\ref{item:21},
      $w(t, x, r) = v_L\left(\Sigma\rho(t, x) + r\right) = 0$.  If
      $r \le - M$, then $r + \Sigma\rho(t,x) \le r + M \le 0$ and so,
      by~\ref{item:21},
      $w(t, x, r) = v_L\left(\Sigma\rho(t, x) + r\right) = W$.
      Then~\ref{item:7} holds with $R = R_L + M$.}
  \item[\ref{item:11}:] Assumptions~\ref{item:21} and~\ref{item:19} allow to compute
    \begin{displaymath}
      \partial^2_{xx}w(t,x,r)
      = v_L''\left(\Sigma\rho(t,x)+r\right) \,
      \left(\Sigma\partial_x \rho(t,x)\right)^2
      +
      v_L'\left(\Sigma\rho(t,x)+r\right) \,
      \Sigma\partial_{xx}^2 \rho(t,x) \,.
    \end{displaymath}
  Moreover, for every
    $r \in \reali$, we have
    \begin{align}\label{eq:22}
      & \int_0^T \int_\reali \modulo{\partial^2_{xx} w(t, x, r)} \d x \d t
      \\
      \nonumber
      \leq
      & \norma{v_L''}_{\L\infty(\reali;\reali)}
        \int_0^T \int_\reali
        \modulo{ \Sigma \partial_x \rho(t, x)}^2 \d x \d t
        + \norma{v_L'}_{\L\infty(\reali;\reali)} \int_0^T \int_\reali
        \modulo{\Sigma \partial^2_{xx} \rho(t, x)} \d x \d t
      \\
      \nonumber
      \leq
      & \norma{v_L''}_{\L\infty(\reali;\reali)}
        \norma{\partial_x\rho}_{\L\infty ([0,T]\times\reali;\reali^k)}
        \norma{\partial_x\rho}_{\L1 ([0,T]\times\reali;\reali^k)}
        + \norma{v_L'}_{\L\infty(\reali;\reali)}
        \norma{\partial^2_{xx}\rho}_{\L1 ([0,T]\times\reali;\reali^k)} \,,
    \end{align}
    proving~\ref{item:11}.

  \item[\ref{item:12}:] Clearly, assumptions~\ref{item:21},
      \ref{item:14}, and~\ref{item:19} imply that the functions
      $\partial^2_{xx} w$ and $\partial^2_{x r} w$ belong to
      $\C0\left([0, T] \times \reali \times \reali; \reali\right)$,
      proving~\ref{item:12}.

    \item[\ref{item:9}:] Assumptions~\ref{item:21} and~\ref{item:40}
      imply that, for every $M>0$ and $t \in [0,T]$, it holds that
      \begin{eqnarray*}
        \sup_{t \in [0,T]}
        \norma{\partial_x w (t, \cdot, \cdot)}_{\L1 (\reali;\L\infty (\reali;\reali))}
        & =
          & \sup_{t \in [0,T]}
            \int_{\reali} \esssup_{q\in\reali}
            \modulo{v'_L\left(\Sigma\rho (t,x) + q\right) \;
            \Sigma \partial_x \rho (t,x)} \d{x}
          \\
        & \leq
        & \norma{v_L'}_{\L\infty(\reali;\reali)}
          \norma{ \partial_x \rho}_{\C0([0,T];\L1 (\reali;\reali^k))}
          < +\infty\,,
      \end{eqnarray*}
    proving~\ref{item:9}.
  \end{description}
  \vspace{-\baselineskip}
\end{proof}

\begin{lemma}\label{distloc}
Let~\ref{item:21} hold. Moreover, suppose $r_o , \widehat r_o \in \cBV{}(\reali; \reali)$ and $\rho, \widehat \rho$ satisfy~\ref{item:14}--\ref{item:16}.
Call $r,\widehat r$ the Kru\v zkov solutions to
   \begin{equation}
   \label{eq:43}
     \left\{
       \begin{array}{@{}l}
         \partial_t r
         + \partial_x
         \left(r \; v_{L} \left(
         \Sigma \rho(t,x) + r\right)\right) = 0
         \\
         r (0,x) = r_o (x)
       \end{array}
     \right.
     \mbox{ and }
     \left\{
       \begin{array}{@{}l@{}}
         \partial_t \widehat r
         + \partial_x
         \left(\widehat r \; v_{L} \left(
         \Sigma\widehat \rho(t,x) + \widehat r\right)\right)=0
         \\
         \widehat r (0,x) =\widehat r_o (x).
       \end{array}
     \right.
\end{equation}
   Then, for all $t \in [0,T]$
   \begin{align*}
     &
       \norma{r(t) - \widehat r(t)}_{\L1(\reali; \reali)}
     \\
     \leq
     &  \norma{r_o - \widehat r_o}_{\L1(\reali; \reali)}
     \\
     &
       +
       \mathcal{C} \left(
       \begin{array}{@{}c@{}}
         R_L,
         \norma{v'_L}_{\W1\infty(\reali; \reali)},
         \norma{\rho}_{\C0([0,t]; \W1\infty(\reali; \reali^k)},
         \\
         \norma{\widehat \rho}_{\C0([0,t]; \W1\infty(\reali; \reali^k)},
         \norma{\partial_x \rho}_{\L1([0,t]; \W11(\reali; \reali^k))},
         \tv(\widehat r_o), t
       \end{array}
       \right)
       t \, \norma{\rho - \widehat \rho}_{\C0([0,t]; \W11(\reali; \reali^k ))}.
   \end{align*}
\end{lemma}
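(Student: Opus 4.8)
The plan is to split the change in $r$ into a contribution coming from the change of flux and one coming from the change of initial datum, and to treat each with the stability results already available for the local equation in isolation. Concretely, I would introduce the auxiliary Kru\v zkov solution $\tilde r$ of $\partial_t \tilde r + \partial_x\bigl(\tilde r\, w(t,x,\tilde r)\bigr)=0$ with $\tilde r(0,\cdot)=\widehat r_o$, where $w(t,x,r)\coloneqq v_L\bigl(\Sigma\rho(t,x)+r\bigr)$ is the flux of the first problem in~\eqref{eq:43}; that is, $\tilde r$ carries the flux of $r$ and the initial datum of $\widehat r$.

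The preliminary step is to verify the hypotheses of the cited lemmas. Setting also $\widehat w(t,x,r)\coloneqq v_L\bigl(\Sigma\widehat\rho(t,x)+r\bigr)$, Lemma~\ref{lem:6} applied to $\rho$ and to $\widehat\rho$ (legitimate by~\ref{item:21} and~\ref{item:14}--\ref{item:16}) shows that both $w$ and $\widehat w$ satisfy~\ref{item:5}--\ref{item:9}, and that the constant $R$ in~\ref{item:7} can be chosen common to the two and of the form $R=\mathcal{C}\bigl(R_L,\norma{\rho}_{\C0([0,T];\W1\infty(\reali;\reali^k))},\norma{\widehat\rho}_{\C0([0,T];\W1\infty(\reali;\reali^k))}\bigr)$. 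Hence Lemmas~\ref{lem:0} and~\ref{lem:1} guarantee that $r$, $\widehat r$ and the auxiliary $\tilde r$ are well defined (here $\widehat r_o\in\cBV{}(\reali;\reali)\subseteq\L\infty(\reali;\reali)\cap\BV{}(\reali;\reali)$).

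Now write $\norma{r(t)-\widehat r(t)}_{\L1(\reali;\reali)}\leq\norma{r(t)-\tilde r(t)}_{\L1(\reali;\reali)}+\norma{\tilde r(t)-\widehat r(t)}_{\L1(\reali;\reali)}$. In the first summand $r$ and $\tilde r$ solve the same conservation law (flux $w$), which satisfies~\ref{item:3newnew} by Lemma~\ref{lem:6}; so the $\L1$ contraction~\eqref{eq:63} of Lemma~\ref{lem:0} gives $\norma{r(t)-\tilde r(t)}_{\L1(\reali;\reali)}\leq\norma{r_o-\widehat r_o}_{\L1(\reali;\reali)}$. In the second summand $\tilde r$ and $\widehat r$ have the same initial datum $\widehat r_o$ but the fluxes $w$ and $\widehat w$; thus, with $w$ in the role of the reference flux (whose further requirements~\ref{item:11}, \ref{item:10}, \ref{item:12} are again provided by Lemma~\ref{lem:6}), estimate~\eqref{eq:4} of Lemma~\ref{lem:2} yields
\begin{equation*}
  \norma{\tilde r(t)-\widehat r(t)}_{\L1(\reali;\reali)}
  \leq
  t\,e^{\kappa_o t}\,
  \mathcal{C}\!\left(\tv(\widehat r_o),\,
    \int_0^t\!\!\int_\reali\norma{\partial^2_{xx}w(\tau,x,\cdot)}_{\L\infty(\reali;\reali)}\d{x}\d{\tau},\,R\right)
  \norma{w-\widehat w}_{\mathcal{W}} .
\end{equation*}

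It then remains to bound the three $\rho,\widehat\rho$-dependent factors in terms of the quantities in the statement. By the expressions for $\partial_x w$ and $\partial^2_{xr}w$ computed in the proof of Lemma~\ref{lem:6}, $\kappa_o\leq 3\bigl(\norma{\partial_x w}_{\L\infty}+R\,\norma{\partial^2_{xr}w}_{\L\infty}\bigr)\leq\mathcal{C}\bigl(R,\norma{v_L'}_{\W1\infty(\reali;\reali)},\norma{\rho}_{\C0([0,t];\W1\infty(\reali;\reali^k))}\bigr)$; using the expression for $\partial^2_{xx}w$ from Lemma~\ref{lem:6} together with H\"older's inequality, the double integral is bounded by $\mathcal{C}\bigl(\norma{v_L'}_{\W1\infty(\reali;\reali)},\norma{\rho}_{\C0([0,t];\W1\infty(\reali;\reali^k))}\bigr)\,\norma{\partial_x\rho}_{\L1([0,t];\W11(\reali;\reali^k))}$; and, by~\eqref{eq:25}, the mean value theorem applied to $v_L\in\C2$, and the one-dimensional embedding $\W11(\reali;\reali^k)\hookrightarrow\L\infty(\reali;\reali^k)$ (to replace the $\L\infty$ by $\W11$ norms of $\rho-\widehat\rho$), $\norma{w-\widehat w}_{\mathcal{W}}\leq\mathcal{C}\bigl(R,\norma{v_L'}_{\W1\infty(\reali;\reali)},\norma{\rho}_{\C0([0,t];\W11(\reali;\reali^k))}\bigr)\,\norma{\rho-\widehat\rho}_{\C0([0,t];\W11(\reali;\reali^k))}$. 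Substituting the value of $R$, collecting, and absorbing $t\,e^{\kappa_o t}$ into $\mathcal{C}$, the second summand is $\leq\mathcal{C}(\cdots)\,t\,\norma{\rho-\widehat\rho}_{\C0([0,t];\W11(\reali;\reali^k))}$ with the constant depending, as in the statement, on $R_L$, $\norma{v_L'}_{\W1\infty}$, the $\C0([0,t];\W1\infty)$-norms of $\rho$ and $\widehat\rho$, $\norma{\partial_x\rho}_{\L1([0,t];\W11)}$, $\tv(\widehat r_o)$ and $t$; adding the first summand completes the proof. The only delicate part is precisely this bookkeeping of the constants: the asymmetric roles of $\rho$ versus $\widehat\rho$ and of $r_o$ versus $\widehat r_o$ in the final bound are exactly what dictate that the auxiliary solution $\tilde r$ carry the flux $w$ and the datum $\widehat r_o$ rather than the opposite combination; everything else is a direct application of Lemmas~\ref{lem:0}, \ref{lem:1}, \ref{lem:2} and~\ref{lem:6}.
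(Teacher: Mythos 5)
Your proposal is correct and follows essentially the same route as the paper: verify via Lemma~\ref{lem:6} that $w$ and $\widehat w$ satisfy~\ref{item:5}--\ref{item:9}, then combine the $\L1$ contraction~\eqref{eq:63} of Lemma~\ref{lem:0} with the flux-stability estimate~\eqref{eq:4} of Lemma~\ref{lem:2}, and finally bound $\kappa_o$, the double integral of $\partial^2_{xx}w$, and $\norma{w-\widehat w}_{\mathcal{W}}$ exactly as the paper does. The only difference is presentational: you make explicit the auxiliary solution $\tilde r$ (flux $w$, datum $\widehat r_o$) behind the triangle inequality, which the paper leaves implicit when it says that~\eqref{eq:63} and~\eqref{eq:4} together yield the estimate, and your intermediate constant for $\norma{w-\widehat w}_{\mathcal{W}}$ should also record the dependence on $\norma{\widehat\rho}_{\C0([0,t];\W1\infty(\reali;\reali^k))}$, which you anyway include in the final constant.
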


\begin{proof}
  Call $w(t,x,q) \coloneq v_L\left(\Sigma \rho (t,x) + q \right)$,
  $\widehat w(t,x,q) \coloneq v_L\left(\Sigma \widehat\rho(t,x) + q \right)$
  and note that, since~\ref{item:21} holds and $\rho, \widehat \rho$
  satisfy~\ref{item:14}--\ref{item:16}, by \Cref{lem:6},
  $w, \widehat w$ satisfy~\ref{item:5}--\ref{item:9} and
  $w, \widehat w \in \mathcal{W}$ where $\mathcal{W}$ is defined
  in~\eqref{eq:21}, with
\begin{equation}
  R \coloneqq R_L
  + \max\left\{ \
    \norma{\rho}_{\L\infty([0,T]\times \reali; \reali^k)},
    \norma{\widehat \rho}_{\L\infty([0,T]\times \reali; \reali^k)}
  \right\}.
\end{equation}
Hence, we may apply \Cref{lem:0}, \Cref{lem:1} and \Cref{lem:2} to
problems~\eqref{eq:43}.  In particular,~\eqref{eq:63} and~\eqref{eq:4}
yields that for all $t \in [0,T]$
\begin{align*}
    \norma{r(t)- \widehat r(t)}_{\L1(\reali; \reali)} \leq & \norma{r_o - \widehat r_o}_{\L1(\reali; \reali)}
    \\
    &
    + t \, e^{\kappa_o t} \, \mathcal{C}\left (\tv(\widehat r_o), \int_0^t
      \int_\reali
      \norma{\partial^2_{xx} w(\tau, x, \cdot)}_{\L\infty(\reali; \reali)}
      \d x \d \tau,R\right) \norma{w - \widehat w}_{\mathcal{W}}
\end{align*}
where
\begin{eqnarray*}
  \kappa_o
  & \leq
  & 3
    \left(
    \norma{\partial_x w}_{\L\infty([0,T]\times \reali\times\reali;\reali)}
    +
    R \,
    \norma{\partial_{xr}^2 w}_{\L\infty([0,T]\times\reali\times\reali;\reali)}
    \right)
  \\
  & \le
  & 3 \left(
    \norma{v'_L}_{\L\infty(\reali;\reali)} +
    R \,
    \norma{v_L''}_{\L\infty(\reali;\reali)}
    \right)
    \norma{\partial_x \rho}_{\L\infty([0,T]\times \reali; \reali^k)}.
\end{eqnarray*}
Furthermore, owing to~\eqref{eq:22},
\begin{align}
  \nonumber
  & \int_0^t \int_\reali \norma{\partial_{xx}^2 w(\tau,x,\cdot)}
    _{\L\infty(\reali;\reali)} \d{x} \d{\tau}
  \\
  \leq
  & \norma{v_L''}_{\L\infty(\reali;\reali)}
    \norma{\partial_x {\rho}}_{\L\infty([0,t] \times \reali;\reali^k)}
    \norma{\partial_x{\rho}}_{\L1([0,t]\times \reali;\reali^k)}
    {+}
    \norma{v_L'}_{\L\infty(\reali;\reali)}
    \norma{\partial_{xx}^2 {\rho}}_{\L1([0,t] \times \reali;\reali^k)}.
\end{align}
We proceed now to estimate the term $\norma{w-\widehat w}_{\mathcal{W}}$
by evaluating separately each of the summands in~\eqref{eq:25}. It
holds that
\begin{eqnarray*}
  \norma{w-\widehat w}_{\L\infty([0,T]\times \reali\times\reali;\reali)}
  & =
  & \sup_{t \in [0,T], x \in \reali, q \in \reali}
    \modulo{v_L\left(\Sigma{\rho}(t,x) + q \right)
    - v_L\left(\Sigma{\widehat \rho}(t,x) + q\right)
    }
  \\
  &\leq
  & \norma{v_L'}_{\L\infty(\reali;\reali)} \,
    \norma{{\rho}-\widehat{\rho}}_{\L\infty([0,T]\times\reali;\reali^k)}
  \\
  &\leq
  &
    \norma{v_L'}_{\L\infty(\reali;\reali)} \,
    \norma{{\rho}-\widehat{\rho}}_{\C0([0,T]; \W11(\reali;\reali^k))}\,.
\end{eqnarray*}
  Similarly, direct
  computations lead to
  \begin{eqnarray*}
    \norma{\partial_r w - \partial_r \widehat w}_{\L\infty([0,T]\times \reali \times\reali; \reali)}
    & =
    & \sup_{t \in [0,T], x \in \reali, q \in \reali} \modulo{
      v_L'\left(\Sigma \rho(t,x) +q\right)
      - v_L' \left( \Sigma{\widehat \rho}(t,x)+q\right)
      }
    \\
    & \leq
    & \norma{v_L''}_{\L\infty(\reali;\reali)}
      \norma{{\rho}-\widehat{\rho}}_{\C0([0,T]; \W11(\reali;\reali^k))}.
  \end{eqnarray*}
  At last, we get for all $t \in [0,T]$
  \begin{eqnarray*}
    &
    & \norma{\partial_x w(t, \cdot, \cdot) - \partial_x \widehat w(t, \cdot, \cdot)}_{\L1(\reali;\L\infty(\reali;\reali))}
    \\
    & =
    & \int_\reali \sup_{ q \in \reali}
      \modulo{v_L'(\Sigma \rho(t,x) +q) \,
      \Sigma\partial_x \rho(t,x)
      - v_L'(\Sigma{\widehat \rho}(t,x) + q) \,
      \Sigma\partial_x {\widehat \rho}(t,x)} \, \d{x}
    \\
    & \leq
    & \int_\reali \sup_{ q \in \reali} \modulo{
      v_L'( \Sigma{\rho}(t,x)+q)
      \Sigma
      \left( \partial_x {\rho}(t,x) - \partial_x {\widehat \rho}(t,x)
      \right)
      } \, \d{x}
    \\
    &
    & +
      \int_\reali \sup_{ q \in \reali} \modulo{
      \left(
      v_L'(\Sigma{\rho}(t,x) +q)
      - v_L'(\Sigma{\widehat \rho}(t,x)+q)
      \right)
      \Sigma\partial_x {\widehat \rho}(t,x)
      } \, \d{x}
    \\
    &\leq
    & \norma{v_L'}_{\L\infty(\reali;\reali)}
      \norma{\partial_x{\rho}(t) - \partial_x{\widehat \rho}(t) }_{\L1(\reali;\reali^k)}
    \\
    &
    & +
      \norma{v_L''}_{\L\infty(\reali;\reali)}
      \norma{\partial_x {\widehat \rho}(t)}_{\L\infty(\reali;\reali^k)} \,
      \norma{{\rho(t)} - {\widehat \rho(t)} }_{\L1(\reali;\reali^k)}.
  \end{eqnarray*}
Hence,
\begin{align*}
&
    \sup_{t \in [0,T]} \norma{\partial_x w(t, \cdot, \cdot) - \partial_x \widehat w(t, \cdot, \cdot)}_{\L1(\reali;\L\infty(\reali;\reali))}
    \\
    \leq
    &
    \left(
    \norma{v_L'}_{\L\infty(\reali;\reali)} + \norma{v_L''}_{\L\infty(\reali;\reali)}
      \norma{\partial_x {\widehat \rho}}_{\L\infty([0,T] \times \reali;\reali^k)}
    \right)
    \norma{\rho - \widehat \rho }_{\C0([0,T]; \W11(\reali;\reali^k))}
\end{align*}
and, for all $t >0$, collecting the previous computations we get
\begin{align*}
&
    \norma{r(t) - \widehat r(t)}_{\L1(\reali; \reali)}
    \\\leq &
    \norma{r_o - \widehat r_o}_{\L1(\reali; \reali)}
    \\
    &
    +
    \mathcal{C} \left(
      \begin{array}{@{}c@{}}
        R_L,
        \norma{v'_L}_{\W1\infty(\reali; \reali)},
        \norma{\rho}_{\C0([0,t]; \W1\infty(\reali; \reali^k)},
        \\
        \norma{\widehat \rho}_{\C0([0,t]; \W1\infty(\reali; \reali^k)},
        \norma{\partial_x \rho}_{\L1([0,t]; \W11(\reali; \reali^k))},
        \tv(\widehat r_o), t
      \end{array}
      \right)
    \norma{\rho - \widehat \rho}_{\C0([0,t]; \W11(\reali; \reali^k ))} t.
\end{align*}
Note that as $\rho, \widehat \rho$ satisfy
\ref{item:14}--\ref{item:16}, then $\rho, \widehat \rho$ belong to
$\C0\left([0,t]; \W1\infty(\reali; \reali^k)\right)$ and $\partial_x\rho \in
 \L1\left([0,t]; \W11(\reali; \reali^k)\right).$ Indeed,
$\rho \in \C0\left([0,t]; \L\infty( \reali; \reali^k)\right)$ due to
\ref{item:40} and Sobolev embedding
$\W11(\reali; \reali^k) \hookrightarrow \L\infty(\reali; \reali^k)$,
see~\cite[Theorem~8.8]{zbMATH05633610}.  Moreover
$\partial_x \rho \in \C0\left([0,t]; \L\infty( \reali; \reali^k)\right)$: first,
$\partial_x \rho \in \L1\left([0,t]; \W11(\reali; \reali^k)\right)
\hookrightarrow \L1\left([0,t]; \L\infty( \reali; \reali^k)\right) $
owing to~\ref{item:40} and~\ref{item:19}; second,
$\partial_{tx}^2 \rho \in \L\infty([0,t]\times \reali; \reali^k)$ due
to~\ref{item:16}. Then, as
$\partial_x \rho \in \W11\left([0,t]; \L\infty(\reali;
  \reali^k)\right)$, by~\cite[Theorem~2, Section~5.9]{MR2597943} we get
$\partial_x\rho \in \C0\left([0,t]; \L\infty(\reali;
  \reali^k)\right)$. The case of $\widehat\rho$ is analogous.
\end{proof}

\subsection{Proof of Theorem~\ref{thm:main}}
\label{subsec:coupled-problem}

Fix a positive constant $M$. We consider first the case of an initial
datum $(\rho_o, r_o)$ with
\begin{equation}
  \label{eq:67}
    \begin{array}{c}
    \rho_o \in \left( \C2 \cap \W2\infty \cap \W21 \right)(\reali;
    \reali^k)\quad
    r_o \in \cBV{} (\reali; \reali)
    \\
    \norma{\rho_o}_{\W21 (\reali; \reali^k)}
    +
    \norma{r_o}_{\cBV{} (\reali; \reali)}
    \leq
    M.
  \end{array}
\end{equation}
Under this condition, the proof is achieved by means of a sequence of approximate solutions $(\rho_n,r_n)$ iteratively defined as follows.
\begin{equation}
  \label{eq:14}
  \begin{cases}
    \partial_t\rho_1^i + \partial_x \left( \rho_1^i \, v^i_{NL}\left(
    \Sigma \rho_1 * \eta^i + (r_o * \eta^i) (x) \right) \right)=0
    \qquad i=1, \ldots, k
    \\
    \rho_1(0,x) = \rho_o(x)
  \end{cases}
\end{equation}
and
\begin{equation}
  \label{eq:15}
  \begin{cases}
    \partial_tr_1 +
    \partial_x \left( r_1 \, v_{L}\left( \Sigma \rho_1 (t,x) + r_1 \right)
    \right)=0,
    \\
    r_1(0,x)=r_o(x) \,.
  \end{cases}
\end{equation}
For $n \geq 2$, define recursively the functions $\rho_n$ and $r_n$ as
solutions to the problems
\begin{equation}
  \label{eq:9}
  \begin{cases}
    \partial_t\rho_n^i +
    \partial_x\left(
    \rho_n^i \,v^i_{NL}\left(\Sigma \rho_n  * \eta^i + (r_{n-1} (t)  * \eta^i) (x)\right)\right)
    =0 \qquad i=1, \ldots, k
    \\
    \rho_n(0,x) = \rho_o(x),
  \end{cases}
\end{equation}
and
\begin{equation}
  \label{eq:10}
  \begin{cases}
    \partial_t r_n + \partial_x\left(r_n \, v_L\left( \Sigma \rho_{n} (t,x) + r_n \right)
     \right)=0
    \\
    r_n(0,x)=r_o(x) \,.
  \end{cases}
\end{equation}

\begin{enumerate}[label=\bf{Step~1.}, ref=\textup{\textbf{Step~1}},
  align=left]
    \item  \label{lem:benDef}
    \textbf{The functions $\rho_n$ and $r_n$ defined in~\eqref{eq:9}
  and~\eqref{eq:10} are well defined with
  $\rho_n \in \C0\left([0,T]; \W11(\reali; \reali^k)\right)$ and
  $r_n \in \C0\left([0,T]; \L1(\reali;\reali)\right)$.}
\end{enumerate}
The inductive procedure on which the proof of the present step is
based follows this scheme:
\begin{center}
  \begin{tabular}{cc@{\hspace{-.025cm}}c@{\hspace{-.025cm}}c}
    $n=1$
    & \ref{item:17} \ref{item:hyp-eta} \eqref{eq:67}
    & $\implies$
    & $\rho_1$ satisfies~\ref{item:32}--\ref{item:38}
    \\[6pt]
    $n\geq1$
    & $\rho_n$ satisfies~\ref{item:32}--\ref{item:38}
      and~\ref{item:21}
    & $\implies$
    & \ref{item:14}--\ref{item:16}
    \\
    & \ref{item:21} and \ref{item:14}--\ref{item:16}
    & $\implies$
    & \ref{item:5}--\ref{item:9}
    \\
    & \ref{item:5}--\ref{item:9}  and~\eqref{eq:67}
    & $\implies$
    & $r_n \in \C0 \left([0,T];\L1 (\reali;\reali)\right)$
    \\
    & \ref{item:17} \ref{item:hyp-eta} \eqref{eq:67}  and
      $r_n \in \C0 \left([0,T];\L1 (\reali;\reali)\right)$
    & $\implies$
    & $\rho_{n+1}$ satisfies~\ref{item:32}--\ref{item:38}
  \end{tabular}
\end{center}

Fix an arbitrary $T>0$. Under assumptions~\ref{item:17},
\ref{item:hyp-eta} and with $(\rho_o, r_o)$ as in~\eqref{eq:67},
problem~\eqref{eq:14} fits into Lemma~\ref{lem:4} and \Cref{lem:5}.
Hence $\rho_1 \in \C0\left([0,T]; \W11(\reali; \reali^k)\right)$ is well defined
and ${\rho_1}$ satisfies~\ref{item:14}--\ref{item:16} of \Cref{lem:6}.

Similarly, assumptions~\ref{item:21}, \eqref{eq:67} and the
properties~\ref{item:14}--\ref{item:16} of ${\rho_1}$ ensure that
\Cref{lem:0} and \Cref{lem:1} apply, so that $r_1$ is well defined
and $r_1 \in \C0\left([0, T]; \L1(\reali; \reali)\right)$.

\textit{Induction process.}  Fix $n \in \mathbb{N}, n \ge 2$ and
suppose that $r_{n-1} \in \C0\left([0,T]; \L1(\reali;\reali)\right)$. Then, under
assumptions~\ref{item:17}, \ref{item:hyp-eta} and~\eqref{eq:67},
$\rho_n \in \C0\left([0,T]; \W11(\reali; \reali^k)\right)$, solution
to~\eqref{eq:9} is well defined by \Cref{lem:4} and
\Cref{lem:5}. Furthermore, ${\rho_n}$ satisfies
~\ref{item:14}--\ref{item:16} of \Cref{lem:6} by assumptions on
$r_{n-1}$.  Finally, assumptions~\ref{item:21}, \eqref{eq:67} and
the properties~\ref{item:14}--\ref{item:16} of ${\rho_n}$ ensure that
\Cref{lem:0} and \Cref{lem:1} apply, so that $r_n$ is well defined
and $r_n \in \C0\left([0, T]; \L1(\reali; \reali)\right)$.

\begin{enumerate}[label=\bf{Step~2.}, ref=\textup{\textbf{Step~2}},
  align=left]
    \item  \label{lem:7}
    \textbf{
    There exists a constant
  $\mathcal{C}_4 \coloneqq \mathcal{C} (
  \norma{\eta}_{\W3\infty(\reali,\reali^k)},
  \norma{v_{NL}}_{\W3\infty(\reali,\reali^k)}, M,T)$ such that
  $ \norma{\rho_n(t)- \rho_{n-1}(t)}_{\W11(\reali;\reali^k)} \le
  \mathcal{C}_4 \, \norma{r_{n-1} - r_{n-2}} _{\C0([0,t];
    \L1(\reali;\reali))} \, t $ for all $t \in [0,T]$.
    }
\end{enumerate}
Fix $n \in\naturali \setminus \{0,1\}$. We have
$r_{n-2}, r_{n-1} \in \C0\left([0,T]; \L1(\reali;\reali)\right)$ by~\ref{lem:benDef}. Define
$\mathcal{R} \coloneqq \norma{r_o}_{\L1(\reali; \reali)}$.
Lemma~\ref{lem:0} ensures that
$\max\{\norma{r_{n-2}}_{\C0([0,T]; \L1(\reali; \reali))}, \norma{r_{n-1}}_{\C0([0,T]; \L1(\reali; \reali))}\} \leq \mathcal{R}$. Apply Lemma~\ref{lem:Linfty} with
  $r \coloneqq r_{n-2}$, $\widehat{r} \coloneqq r_{n-1}$ obtaining that, for all
  $t \in [0,T]$,
  \begin{eqnarray*}
    &
    & \norma{\rho_n(t) - \rho_{n-1}(t)}_{\W11(\reali; \reali^k)}
    \\
    & \leq
    & \mathcal{C}\left(
      \norma{\eta}_{\W3\infty(\reali; \reali^k)},
      \norma{v_{NL}}_{\W3\infty(\reali; \reali^k)},
      \norma{\rho_o}_{\W21(\reali; \reali^k)},
      \norma{r_o}_{\L1(\reali; \reali)},
      t
      \right)
    \\
    &
    & \times
      \int_0^t \norma{r_{n-1}(\tau) - r_{n-2}(\tau)}_{\L1(\reali; \reali)}
      \d{\tau},
  \end{eqnarray*}
  concluding the step.

  \begin{enumerate}[label=\bf{Step~3.}, ref=\textup{\textbf{Step~3}},
    align=left]
  \item \label{lem:8} \textbf{There exists
      $\mathcal{C}_5 \coloneqq \mathcal{C}\left(
        \norma{\eta}_{\W3\infty(\reali;\reali^k)},
        {\norma{v_{NL}}_{\W3\infty(\reali;\reali^k)}}, R_L,
        \norma{v_L'}_{\W1\infty(\reali;\reali)}, M, T \right)$ such
      that
      $ \norma{r_n(t)- r_{n-1}(t)}_{\L1(\reali;\reali)} \le
      \mathcal{C}_5 \, \norma{r_{n-1}-r_{n-2}}_{\C0([0,t];
        \L1(\reali;\reali))} \,t^2 $ for $t \in [0,T]$.  }
  \end{enumerate}
Fix $n \in \naturali \setminus \{0,1\}$ and let $\rho_{n-1}, \rho_n$ be as defined in~\ref{lem:benDef}.
Then, as they satisfy~\ref{item:14}--\ref{item:16}, we may apply \Cref{distloc}, which reads for all $t \in [0,T]$
\begin{align*}
  & \norma{r_n(t) -  r_{n-1}(t)}_{\L1(\reali; \reali)}
  \\
  \leq
  & \mathcal{C} \! \left(
    \begin{array}{@{}c@{}}
      R_L,
      \norma{v'_L}_{\W1\infty(\reali; \reali)},
      \norma{\rho_{n}}_{\C0([0,t]; \W1\infty(\reali; \reali^k)},
      \\
      \norma{\rho_{n{-}1}}_{\C0([0,t]; \W1\infty(\reali; \reali^k)},
      \norma{\partial_x \rho_n}_{\L1([0,t]; \W11(\reali; \reali^k))},
      \tv(r_o)
    \end{array}
    \right)\!
    t \norma{\rho_n {-}  \rho_{n{-}1}}_{\C0([0,t]; \W11(\reali; \reali^k ))}.
\end{align*}
  Recall now that ${\rho_{n}} = (\rho_n^1, \ldots, \rho_n^k)$ is such
  that for $i=1, \ldots, k$, $\rho_n^i$ solves
  \begin{equation}
    \!\!\!\!\!
    \begin{cases}
      \partial_t {\rho_{n}^i}
      + \partial_x \left( {\rho_{n}^i} V^i(t,x,{\rho_{n}} * \eta) \right)
      = 0
      \\
      \rho_{n}^i (0,x) = \rho_o^i(x)
    \end{cases}
    \!\!
    \mbox{for }
    V^i(t,x,q) \coloneqq
    v_{NL}^i\left(\Sigma q + (r_{n-1}(t)*\eta^i) (x)\right),
  \end{equation}
  with $V \in \C0([0,T]; \mathcal{V})$. Indeed,
  \begin{flalign*}
    \norma{V}_{\C0([0,T];\mathcal{V})}
    \leq
    & \mathcal{C} \left(
      \norma{\eta}_{\W3\infty(\reali; \reali^k)},
      \norma{v_{NL}}_{\W3\infty(\reali; \reali^k)},
      \norma{r_{n-1}}_{\C0([0,T]; \L1(\reali; \reali))}
      \right) \!\!
    &[\mbox{By \Cref{lem:3}}]
    \\
    \leq
    & \mathcal{C} \left(
      \norma{\eta}_{\W3\infty(\reali; \reali^k)},
      \norma{v_{NL}}_{\W3\infty(\reali; \reali^k)},
      \norma{r_o}_{ \L1(\reali; \reali)}
      \right). \!\!
    & [\mbox{By~\Cref{lem:0}}]
  \end{flalign*}
  The latter bound, due to~\ref{item:39}, \ref{item:47} and~\ref{item:38} in
  \Cref{lem:5}, implies that
  \begin{eqnarray}
    \nonumber
    \!\!\!\!\!\!\!\!\!
    &
    & \norma{{\rho_{n}}}_{\C0([0,t]; \W1\infty(\reali; \reali^k))}
    \\
    \!\!\!\!\!\!\!\!\!
    \label{eq:36}
    & \leq
    & \mathcal{C}  \left(
      \norma{\eta}_{\W3\infty(\reali;\reali^k)},
      \norma{v_{NL}}_{\W3\infty(\reali;\reali^k)},
      \norma{\rho_o}_{\L1(\reali;\reali^k)},
      \norma{\rho_o}_{\W1\infty(\reali;\reali^k)},
      \norma{r_o}_{\L1(\reali;\reali)},
      t
      \right)\quad
    \\
    \nonumber
    \!\!\!\!\!\!\!\!\!
    &
    & \norma{\partial_x {\rho_{n}}}_{\L1([0,t]; \W11(\reali;\reali^k)}
    \\
    \label{eq:38}
    \!\!\!\!\!\!\!\!\!
    & \leq
    & \mathcal C\left(
      \norma{\eta}_{\W3\infty(\reali;\reali^k)},
      \norma{v_{NL}}_{\W3\infty(\reali;\reali^k)},
      \norma{\rho_o}_{\W21(\reali;\reali^k)},
      \norma{r_o}_{\L1(\reali;\reali)},
      t
      \right).
  \end{eqnarray}
  Identical bounds are valid for $\rho_{n-1}$, too.
Then,~\ref{lem:7} allows to conclude that
\begin{align*}
  & \norma{r_n(t) -  r_{n-1}(t)}_{\L1(\reali; \reali)}
  \\
  \leq
  &
  \mathcal{C} \! \left(
    \begin{array}{@{}c@{}}
    \norma{\eta}_{\W3\infty(\reali; \reali^k)}, \!
    \norma{v_{NL}}_{\W3\infty(\reali; \reali^k)},\!
    R_L,
    \norma{v'_L}_{\W1\infty(\reali; \reali)},\\
    \norma{\rho_o}_{\W21(\reali; \reali^k)},
    \norma{r_o}_{\cBV{}(\reali; \reali)},
    t
    \end{array}
    \right) \,\mathcal{C}_4
    \, \norma{r_{n-1} -  r_{n-2}}_{\C0([0,t]; \L1(\reali; \reali))}
    \, t^2,
\end{align*}
proving the claim of the present step.
\begin{enumerate}[label=\bf{Step~4.}, ref=\textup{\textbf{Step~4}},
  align=left]
\item \label{lem:9} \textbf{ \textit{A priori} estimates on $(\rho,r)$
    in
    $\C0\left([0,T]; \W11(\reali; \reali^k) \times \L1(\reali;
      \reali)\right)$, solution to~\eqref{eq:1} in the sense of
    \Cref{def:sol}.  }
\end{enumerate}
Once called
$V^i(t,x,q) \coloneqq v^i_{NL} \left( \Sigma q + \left(
    r(t)*\eta^i\right)(x)\right)$ and
$w(t,x,r) \coloneqq v_L\left( \Sigma \rho(t,x) + r \right)$, by
\Cref{def:sol}, $\rho$ and $r$ are solutions to~\eqref{eq:7}
and~\eqref{eq:2} respectively. We claim that the function $w$
satisfies the requirements~\ref{item:5}--\ref{item:9}. Indeed, the
following scheme holds:
\begin{center}
\begin{tabular}{@{}cccc}
$r \in \C0\left([0,T]; \L1(\reali; \reali)\right)$, \ref{item:17}, \ref{item:hyp-eta}
&
$\Rightarrow$
&
$V \in \C0([0,T]; \mathcal{V})$
&
by \Cref{lem:3}
\\
$V \in \C0([0,T]; \mathcal{V})$, \eqref{eq:67}, \ref{item:hyp-eta}
&
$\Rightarrow$
&
\ref{item:32}--\ref{item:38} hold
&
by \Cref{lem:5}
\\
\ref{item:32}--\ref{item:38} , \ref{item:21}
&
$\Rightarrow$
&
\ref{item:14}--\ref{item:16}
&
\\
\ref{item:21} and~\ref{item:14}--\ref{item:16}
&
$\Rightarrow$
&
$w$ satisfies~\ref{item:5}--\ref{item:9}\qquad
&
by \Cref{lem:6}.
\end{tabular}
\end{center}
Because of the fact $w$ meets the requirements~\ref{item:5}--\ref{item:9}, we apply \Cref{lem:0}: as $r \in \C0([0,T]; \L1(\reali; \reali))$ and the function $\widehat r \equiv 0$ are both Kru\v zkov solutions to~\eqref{eq:2}, then
\begin{equation}
    \label{apriori1}
    \norma{r(t)}_{\L1(\reali;\reali)}
    \leq
    \norma{r_o}_{\L1(\reali;\reali)}.
\end{equation}
By~\eqref{eq:24} in \Cref{lem:3} and~\eqref{apriori1}, one gets that
\begin{eqnarray}
  \nonumber
  \norma{V}_{\C0([0,T];\mathcal{V})}
  & \leq
  & \mathcal{C} \left(
    \norma{\eta}_{\W3\infty(\reali; \reali^k)},
    \norma{v_{NL}}_{\W3\infty(\reali; \reali^k)},
    \norma{r}_{\C0([0,T];\L1(\reali; \reali))}
    \right)
  \\
  \label{eq:68}
  & \leq
  & \mathcal{C} \left(
    \norma{\eta}_{\W3\infty(\reali; \reali^k)},
    \norma{v_{NL}}_{\W3\infty(\reali; \reali^k)},
    \norma{r_o}_{\L1(\reali; \reali)}
    \right).
\end{eqnarray}
Moreover by \Cref{lem:5} we  obtain the
following \textit{a priori} estimates:
\begin{displaymath}
  \begin{array}{@{\!}r@{\,}c@{\,}l@{}}
    \norma{\rho(t)}_{\L1(\reali; \reali^k)}
    & =
    & \norma{\rho_o}_{\L1(\reali; \reali^k)}
      \hfill [\mbox{By~\ref{item:46} in \Cref{lem:4}}]
    \\
    \norma{\partial_x \rho(t)}_{\L1(\reali; \reali^k)}
    & \leq
    & \mathcal{C} \left(
      \norma{\eta}_{\W3\infty(\reali; \reali^k)},
      \norma{V}_{\C0([0,T];\mathcal{V})},
      \norma{\rho_o}_{\W11(\reali; \reali^k)},
      t
      \right)
      \hfill [\mbox{By~\ref{item:39}}]
    \\
    \norma{\partial^2_{xx} \rho(t)}_{\L1(\reali; \reali^k)}
    & \leq
    &  \mathcal{C} \left(
      \norma{\eta}_{\W3\infty(\reali; \reali^k)},
      \norma{V}_{\C0([0,T];\mathcal{V})},
      \norma{\rho_o}_{\W21(\reali; \reali^k)},
      t
      \right)
      \hfill [\mbox{By~\ref{item:39}}]
    \\
    \norma{\rho(t)}_{\L\infty(\reali; \reali^k)}
    & \leq
    & \mathcal{C}\left(
      \norma{\eta}_{\W3\infty (\reali; \reali^k)},
      \norma{V}_{\C0([0,T];\mathcal{V})},
      \norma{\rho_o}_{\L1(\reali;\reali^k)},
      \norma{\rho_o}_{\L\infty(\reali;\reali^k)}, t
      \right)
      \hfill [\mbox{By~\ref{item:47}}]
    \\
    \norma{\partial_x \rho(t)}_{\L\infty(\reali; \reali^k)}
    & \leq
    & \mathcal{C} \!\left(\!
      \norma{\eta}_{\W3\infty(\reali; \reali^k)},
      \norma{V}_{\C0([0,T];\mathcal{V})},
      \norma{\rho_o}_{\L1(\reali; \reali^k)},
      \norma{\rho_o}_{\W1\infty(\reali; \reali^k)}, \!
      t \!
      \right)
      \hfill \! [\mbox{By~\ref{item:38}}]
  \end{array}
\end{displaymath}
Hence, there exists a positive constant
\begin{equation}
  \label{eq:85}
  \mathcal{C}_6
  \coloneqq
  \mathcal{C} \left(
    \norma{\eta}_{\W3\infty(\reali; \reali^k)},
    \norma{v_{NL}}_{\W3\infty(\reali; \reali^k)},
    \norma{\rho_o}_{\W21(\reali; \reali^k)},
    \norma{r_o}_{\L1(\reali; \reali)},
    T
  \right)
\end{equation}
such that
\begin{equation}
   \label{apriori3}
   \norma{\rho(t)}_{\L1(\reali;\reali^k)}
   =
   \norma{\rho_o}_{\L1(\reali;\reali^k)}
   \qquad \text{and} \qquad
   \norma{\rho(t)}_{\W21(\reali; \reali^k)}
   \leq \mathcal{C}_6\,.
\end{equation}
We are now ready to prove the estimate on the total variation of $r$
taking advantage of~\eqref{eq:tvest} in \Cref{lem:2}, with
$R \coloneqq R_L + \norma{\rho}_{\L\infty([0,T]\times \reali;
  \reali^k)}$.  For all $t \in [0,T]$, $r(t)$ is in
$\BV{}(\reali; \reali)$ and
\begin{align*}
    \tv\left(r(t)\right)
    \leq &
    \tv(r_o) e^{\kappa_o t} + R \int_0^t e^{\kappa_o (t-\tau)}\int_\reali \norma{\partial_{xx}^2 w(\tau, x, \cdot)}_{\L\infty(\reali; \reali)} \d{x} \d{\tau}
\end{align*}
where
  \begin{equation*}
    \kappa_o \leq
    3
    \left(
      \norma{\partial_x w}_{\L\infty([0,T]\times \reali\times\reali;\reali)}
      +
      R \,
      \norma{\partial_{xr}^2 w}_{\L\infty([0,T]\times\reali\times\reali;\reali)}
    \right).
  \end{equation*}
Computations analogous to the ones in the proof to \Cref{lem:6} and the use of~\eqref{apriori3} yield
\begin{displaymath}
  \kappa_o
  \leq \mathcal C
  \left(
    \begin{array}{c}
      \norma{\eta}_{\W3\infty(\reali;\reali^k)},
      \norma{v_{NL}}_{\W3\infty(\reali;\reali^k)},
      R_L, \norma{v_L'}_{\W1\infty(\reali;\reali)},
      \\

      \norma{\rho_o}_{\L1(\reali;\reali^k)},
      \norma{\rho_o}_{\W1\infty(\reali;\reali^k)},
      \norma{r_{o}}_{\L1(\reali;\reali)},
      T
    \end{array}
  \right)
\end{displaymath}
and
\begin{eqnarray*}
  &
  &  \int_0^t \int_\reali \norma{\partial_{xx}^2 w(\tau,x,\cdot)}
    _{\L\infty(\reali;\reali)} \d{x} \d{\tau}
  \\
  & \leq
  &
    \mathcal C \left(
    \norma{\eta}_{\W3\infty(\reali;\reali^k)},
    \norma{v_{NL}}_{\W3\infty(\reali;\reali^k)},
    \norma{v_L'}_{\W1\infty(\reali;\reali)},
    \norma{\rho_o}_{\W21(\reali;\reali^k)},
    \norma{r_o}_{\L1(\reali; \reali)},
    t
    \right).
\end{eqnarray*}
So, one gets that for all $t \in [0,T]$
\begin{displaymath}
  \tv\left(r(t)\right)
  \leq
  \mathcal{C}
  \left(
    \begin{array}{c}
      \norma{\eta}_{\W3\infty(\reali;\reali^k)},
      \norma{v_{NL}}_{\W3\infty(\reali;\reali^k)}, R_L,
      \norma{v_L'}_{\W1\infty(\reali;\reali)},
      \\
      \norma{\rho_o}_{\W21(\reali;\reali^k)},
      \norma{r_{o}}_{\cBV{}(\reali;\reali)},
      T
    \end{array}
  \right)
\end{displaymath}
and, thanks to \Cref{lem:lei}, $\norma{r(t)}_{\L\infty(\reali; \reali)}$ is  bounded similarly. Finally, there exist a positive
\begin{displaymath}
    \mathcal{C}_7
     \coloneqq
    \mathcal{C}
    \left(
      \begin{array}{c}
        \norma{\eta}_{\W3\infty(\reali;\reali^k)},
        \norma{v_{NL}}_{\W3\infty(\reali;\reali^k)},
        R_L, \norma{v_L'}_{\W1\infty(\reali;\reali)},
        \\
        \norma{\rho_o}_{\W21(\reali;\reali^k)},
        \norma{r_{o}}_{\cBV{}(\reali;\reali)},
        T
      \end{array}
    \right)
  \end{displaymath}
such that
for all $t \in [0,T]$ it holds
\begin{equation}
  \label{eq:69}
  \norma{r(t)}_{\cBV{}(\reali; \reali)}
  +
  \norma{r(t)}_{\L\infty(\reali; \reali)} \leq
  \mathcal{C}_7 \,.
\end{equation}
\begin{enumerate}[label=\bf{Step~5.}, ref=\textup{\textbf{Step~5}},
  align=left]
    \item  \label{cor:localexist}
    \textbf{
There exists $T^*>0$ such that the sequence
  $\{(\rho_n, r_n)\}$ constructed in~\ref{lem:benDef},
  restricted to the time interval $[0, T^*]$,
  converges to a limit point
  $(\rho, r)$ in
  $\C0([0,T^*]; \W11(\reali; \reali^k) \times \L1(\reali;\reali) )$.
    }
\end{enumerate}
Let $\mathcal{C}_6$ and $\mathcal{C}_7$ be as in~\ref{lem:9}, so
that~\eqref{apriori3} and~\eqref{eq:69} hold. Use $\mathcal{C}_4$
from~\ref{lem:7}, $\mathcal{C}_5$ from~\ref{lem:8} and recall that
$\mathcal{C} (\cdot)$ is non decreasing in each argument. Then, there
exists $\bar{\mathcal{C}}_5$ such that
$\mathcal{C}_5 \leq \bar{\mathcal{C}}_5$, where
\begin{eqnarray*}
  \mathcal{C}_5
  & =
  & \mathcal{C}
    \left(
    \norma{\eta}_{\W3\infty(\reali;\reali^k)},
    \norma{v_{NL}}_{\W3\infty(\reali;\reali^k)},
    R_L,
    \norma{v_L'}_{\W1\infty(\reali;\reali)},
    M, T
    \right)  \qquad \mbox{ and}
  \\
  \bar{\mathcal{C}}_5
  & \coloneqq
  & \mathcal{C}
    \left(
    \norma{\eta}_{\W3\infty(\reali;\reali^k)},
    \norma{v_{NL}}_{\W3\infty(\reali;\reali^k)},
    R_L,
    \norma{v_L'}_{\W1\infty(\reali;\reali)},
    \mathcal{C}_6 + \mathcal{C}_7,
    T
    \right)
\end{eqnarray*}
  so that, according to~\ref{lem:7} and ~\ref{lem:8}, for all
  $t \in [0, T]$ and
  $n \in \mathbb{N}\setminus \left\{0, 1\right\}$,
  \begin{eqnarray}
    \label{eq:47}
    \norma{{\rho_n}(t) - {\rho_{n-1}}(t)}_{ \W11(\reali; \reali^k)}
    & \leq
    & \, \mathcal{C}_4 \, t \,
      \norma{r_{n-1}-r_{n-2}}_{\C0([0,T^*]; \L1(\reali; \reali))} \,,
    \\
    \nonumber
    \norma{r_n(t) - r_{n-1}(t)}_{\L1(\reali; \reali)}
    & \leq
    & \bar{\mathcal{C}}_5 \, t^2 \,
      \norma{r_{n-1} - r_{n-2}}_{\C0([0,T^*]; \L1(\reali; \reali))} ,
  \end{eqnarray}
  The choice
  $T^* \coloneqq \min\left\{1 / \sqrt{2 \bar{\mathcal{C}}_5},
    T\right\}$ implies that $r_n$ is a Cauchy sequence in the space
  $\C0\left([0,T^*]; \L1(\reali; \reali)\right)$, hence it converges
  to a limit
  $r \in \C0\left([0,T^*]; \L1(\reali; \reali)\right)$.
  Then,~\eqref{eq:47} yields that $\rho_n$ is a Cauchy sequence in
  $\C0\left([0,T^*]; \W11(\reali; \reali^k)\right)$ and converges to
  $\rho \in \C0\left([0,T^*]; \W11(\reali; \reali^k)\right)$,
  completing this Step.

\begin{enumerate}[label=\bf{Step~6.}, ref=\textup{\textbf{Step~6}},
  align=left]
    \item  \label{lem:conv}
    \textbf{
    The limit function
  $(\rho,r) \in \C0\left([0,T^*]; \W11(\reali; \reali^k) \times
  \L1(\reali;\reali)\right)$ solves~\eqref{eq:1} in the sense of
  \Cref{def:sol}.
    }
  \end{enumerate}

  Apply \Cref{lem:GoodSol} with $\rho_{o,n} = \rho_o$,
  $r_{o,n} = r_o$, $e_n = r_{n-1}-r_n$ and $\epsilon_n = 0$.
\begin{enumerate}[label=\bf{Step~7.}, ref=\textup{\textbf{Step~7}},
  align=left]
    \item  \label{globalexist}
    \textbf{
    Existence of a global solution $(\rho, r) \in \C0\left([0,T]; \W11(\reali; \reali^k)\times \L1(\reali; \reali)\right)$.
    }
\end{enumerate}
Let $T^*$ be as in the proof of~\ref{cor:localexist} and
$(\rho, r) \in \C0\left([0,T^*]; \W11(\reali; \reali^k) \times
  \L1(\reali; \reali)\right)$ be the solution obtained as limit of the
Cauchy sequence $\{(\rho_n ,r_n)\}$.  We may apply \Cref{lem:5}, which
ensures that
$\rho(T^*) \in (\C2 \cap \W2\infty \cap \W21)(\reali; \reali^k)$ owing
to~\ref{item:32}--\ref{item:36} and the \textit{a priori} estimate
$\norma{\rho(T^*)}_{\W21(\reali; \reali^k)} \leq \, \mathcal{C}_6$
holds by~\ref{lem:9}.  Furthermore, \Cref{lem:0} and \Cref{lem:2}
yield that $r(T^*) \in \cBV{}(\reali; \reali)$ and
$\norma{r(T^*)}_{\cBV{}(\reali; \reali)} \leq \mathcal{C}_7$. Hence,
we can iterate the same construction in order to extend $(\rho, r)$ to
the time intervals $[T^*, 2T^*]$, $[2T^*, 3T^*]$, $\ldots$ until we
reach the final time $T$. Clearly, the extended $(\rho, r)$
solves~\eqref{eq:1} on the whole $[0,T]$ in the sense of
Definition~\ref{def:sol}.

\begin{enumerate}[label=\bf{Step~8.}, ref=\textup{\textbf{Step~8}},
  align=left]
    \item  \label{initial_dependence}
    \textbf{
Locally Lipschitz dependence on the initial datum.
    }
\end{enumerate}
 Fix
  $(\widehat\rho_o, \widehat r_o) \in (\C2 \cap \W21 \cap\W2\infty)
    (\reali;\reali^k) \times \cBV{} (\reali;
    \reali)$. Consider the problems
  \begin{displaymath}
    \begin{cases}
      \partial_t \rho^i
      +
      \partial_x \left(\rho^i \;
      v_{NL}^i\left((\Sigma \rho + r)  *\eta^i\right)\right)
      =0
      \\
      \partial_t r
      + \partial_x \left(r \; v_L\left(\Sigma \rho+r \right)\right)
      = 0
      \\
      \rho(0) = \rho_o
      \\
      r(0) = r_o
    \end{cases}
    \quad
    \begin{cases}
      \partial_t \widehat \rho^i
      +
      \partial_x \left(\widehat\rho^i \;
      v_{NL}^i\left(
      (\Sigma \widehat \rho + \widehat r)  *\eta^i\right)
      \right)
      =0
      \\
      \partial_t \widehat r
      + \partial_x \left(\widehat r \;
      v_L\left(\Sigma \widehat \rho+ \widehat r \right)\right)
      = 0
      \\
      \widehat\rho(0) = \widehat \rho_o
      \\
      \widehat r(0) = \widehat r_o \,.
    \end{cases}
  \end{displaymath}
To establish the distance between the solutions
$\rho$ and $\widehat \rho$ to the nonlocal problems, it is sufficient to apply~\eqref{eq:56} in
\Cref{lem:Linfty}.  Indeed, call
$\mathcal{R} \coloneqq \max\left\{ \norma{r_o}_{\L1(\reali; \reali)},
  \norma{\widehat r_o}_{\L1(\reali; \reali)} \right\}$. By~\eqref{eq:56}--\eqref{eq:57}, for all $t \in [0,T]$
\begin{align}
  \label{eq:61}
\norma{\rho(t) {-} \widehat\rho(t)}_{\W11(\reali; \reali^k)}
\leq
(1 {+} \mathcal{C}_{3}\,t)
\norma{\rho_o {-} \widehat \rho_o}_{\W11(\reali; \reali^k)}
+
\mathcal{C}_{3} \int_0^t \norma{r(\tau) {-} \widehat r(\tau)}_{\L1(\reali; \reali)} \d\tau .
\end{align}
On the other hand, in order to get the stability related the to local problem we take advantage of \Cref{distloc}, which yields that for all $t \in [0,T]$
\begin{align*}
   &
    \norma{r(t) - \widehat r(t)}_{\L1(\reali; \reali)}
    \\
    \leq &
    \norma{r_o - \widehat r_o}_{\L1(\reali; \reali)}
    \\
    &
    +
    \mathcal{C} \left(
      \begin{array}{@{}c@{}}
        R_L,
        \norma{v'_L}_{\W1\infty(\reali; \reali)},
        \norma{\rho}_{\C0([0,t]; \W1\infty(\reali; \reali^k)},
        \\
        \norma{\widehat \rho}_{\C0([0,t]; \W1\infty(\reali; \reali^k)},
        \norma{\partial_x \rho}_{\L1([0,t]; \W11(\reali; \reali^k))},
        \tv(\widehat r_o)
      \end{array}
      \right)
    t \, \norma{\rho - \widehat \rho}_{\C0([0,t]; \W11(\reali; \reali^k ))}.
\end{align*}
Owing to~\ref{lem:9}, there exists a constant
\begin{displaymath}
  \mathcal{C}_8
  \coloneqq
  \mathcal{C}
  \left(
    \begin{array}{c}
      \norma{\eta}_{\W3\infty(\reali; \reali^k)},
      \norma{v_{NL}}_{\W3\infty(\reali; \reali^k)},
      \norma{\rho_o}_{\W21(\reali; \reali^k)},
      \\
      \norma{\widehat \rho_o}_{\W21(\reali; \reali^k)},
      \norma{r_o}_{\L1(\reali; \reali)},
      \norma{\widehat r_o}_{\L1(\reali; \reali)},
      t
    \end{array}
  \right)
\end{displaymath}
such that for all $t \in [0,T]$ one has
\begin{displaymath}
  \norma{\rho}_{\C0([0,t]; \W1\infty(\reali; \reali^k)}
  +
  \norma{\partial_x \rho}_{\L1([0,t]; \W11(\reali; \reali^k))}
  +
  \norma{\widehat \rho}_{\C0([0,t]; \W1\infty(\reali; \reali^k)}
  \leq
  \mathcal{C}_8 \, .
\end{displaymath}
So, collecting the previous estimates and~\eqref{eq:61}, one obtains that there exists a constant
\begin{displaymath}
  \mathcal{C}_9 \coloneqq
  \mathcal{C} \left(
    \begin{array}{c}
      \norma{\eta}_{\W3\infty(\reali; \reali^k)},
      \norma{v_{NL}}_{\W3\infty(\reali; \reali^k)},
      R_L,
      \norma{v'_L}_{\W1\infty(\reali; \reali)},
      \\
      \norma{\rho_o}_{\W21(\reali; \reali^k)},
      \norma{\widehat \rho_o}_{\W21(\reali; \reali^k)},
      \norma{r_o}_{\L1(\reali; \reali)},
      \norma{\widehat r_o}_{\cBV{}(\reali; \reali)},
      t
    \end{array}
  \right)
\end{displaymath}
such that
\begin{eqnarray*}
  &
  & \norma{r(t) - \widehat r(t)}_{\L1(\reali; \reali)}
  \\
  & \leq
  & \norma{r_o - \widehat r_o}_{\L1(\reali; \reali)}
    +
    \mathcal{C}_9 \, t \left(
    \norma{\rho_o - \widehat \rho_o}_{\W11(\reali; \reali^k)}
    +
    \int_0^t \norma{r(\tau)- \widehat r(\tau)}_{\L1(\reali; \reali)} \d\tau
    \right).
\end{eqnarray*}
Gronwall Lemma allows to conclude that for all $t \in [0,T]$
\begin{displaymath}
  \norma{r(t)-\widehat r(t)}_{\L1(\reali; \reali)}
  \leq
    \left(
    \norma{r_o - \widehat r_o}_{\L1(\reali; \reali)}
    +
    \mathcal{C}_9 \, t \, \norma{\rho_o - \widehat\rho_o}_{\W11(\reali; \reali^k)}
    \right) e^{ \mathcal{C}_9 t^2}
\end{displaymath}
and, completing~\eqref{eq:61},
\begin{eqnarray*}
  \norma{\rho(t) - \widehat\rho(t)}_{\W11(\reali; \reali^k)}
  & \leq
  & (1 + \mathcal{C}_3 t)
    \norma{\rho_o - \widehat \rho_o}_{\W11(\reali; \reali^k)}
  \\
  &
  & +
    \mathcal{C}_3
    \left(
    \norma{r_o - \widehat r_o}_{\L1(\reali; \reali)} t
    +
    \mathcal{C}_9 t^2
    \norma{\rho_o - \widehat \rho_o}_{\W11(\reali; \reali^k)}
    \right)
    e^{ \mathcal{C}_9 t^2} \,.
\end{eqnarray*}
In conclusion, for all $t \in [0,T]$ there exists a constant
\begin{equation}
  \label{eq:78}
  \mathcal{C}_{10}
  \coloneqq
  \mathcal{C} \left(
    \begin{array}{c}
      \norma{\eta}_{\W3\infty(\reali; \reali^k)},
      \norma{v_{NL}}_{\W3\infty(\reali; \reali^k)},
      R_L,
      \norma{v_L'}_{\W1\infty(\reali; \reali)},
      \\
      \norma{\rho_o}_{\W21(\reali; \reali^k)},
      \norma{\widehat\rho_o}_{\W21(\reali; \reali^k)},
      \norma{r_o}_{\L1(\reali; \reali)},
      \norma{\widehat r_o}_{\cBV{}(\reali; \reali)},
      t
    \end{array}
  \right)
\end{equation}
  such that
  \begin{equation}
    \label{eq:81}
    \begin{array}{cl}
      & \norma{\rho(t)- \widehat \rho(t)}_{\W11(\reali; \reali^k)}
        +
        \norma{r(t)- \widehat r(t)}_{\L1(\reali; \reali)}
      \\
      \leq
      &
        (1+\mathcal{C}_{10} \, t)
        \left(
        \norma{\rho_o - \widehat\rho_o}_{\W11(\reali; \reali^k)}
        +
        \norma{r_o - \widehat r_o}_{\L1(\reali; \reali)}
        \right).
    \end{array}
  \end{equation}

  \begin{enumerate}[label=\bf{Step~9.}, ref=\textup{\textbf{Step~9}},
  align=left]
    \item
    \textbf{
Local Lipschitz continuity in time.
    }
\end{enumerate}
Apply~\ref{item:26} in
Lemma~\ref{lem:5} to obtain that for all $t, \widehat t \in [0,T]$,
\begin{flalign}
  \nonumber
  & \norma{\rho (t) - \rho (\widehat{t})}_{\W11 (\reali; \reali^k)}
  \\
  \nonumber
  \leq
  & \mathcal{C}
    \left(
    \norma{\eta}_{\W3\infty (\reali; \reali^k)},
    \norma{V}_{\C0([0,T];\mathcal{V})},
    \norma{\rho_o}_{\W21 (\reali;\reali^k)},
    T\right)
    \modulo{t-\widehat{t}}
    & [\mbox{Use~\eqref{eq:68}}]
  \\
  \label{eq:76}
  \leq
  & \mathcal{C}
    \left(
    \norma{\eta}_{\W3\infty (\reali; \reali^k)},
    \norma{v_{NL}}_{\W3\infty(\reali; \reali^k)},
    \norma{\rho_o}_{\W21 (\reali;\reali^k)},
    \norma{r_o}_{\L1(\reali; \reali)},
    T\right)
    \modulo{t-\widehat{t}} \,.
  \end{flalign}
  Moreover, as we aim at applying \Cref{lem:continuity}, call
  $w(t,x,r) \coloneq v_L\left(\Sigma \rho(t,x) + r\right)$, so that by
  the proof to \Cref{lem:6} one gets
\begin{eqnarray*}
  \sup_{q \in [-K,K]} \tv\left(w (t, \cdot, q)\right)
  & \leq
  & \norma{v'_L}_{\L\infty(\reali; \reali)} \,
    \norma{\partial_x \rho(t)}_{\L1(\reali; \reali^k)}
  \\
  \norma{\partial_r w}_{\L\infty ([0,T]\times\reali\times\reali; \reali)}
  & \leq
  & \norma{v'_L}_{\L\infty(\reali; \reali)}
  \\
  \norma{w}_{\L\infty ([0,T]\times\reali\times\reali;\reali)}
  & \leq
  & V_L + \norma{v'_L}_{\L\infty(\reali; \reali)}R_L.
\end{eqnarray*}
Hence, as~\ref{lem:9} implies $K= \mathcal{C}_7$ and $\norma{\partial_x \rho(t)}_{\L1(\reali; \reali^k)} \leq \mathcal{C}_6$, again by~\ref{lem:9}, one obtains
\begin{flalign}
  \nonumber
  &\norma{r (t) - r (\widehat t)}_{\L1 (\reali;\reali)}
  \\
  \nonumber
  \leq \;
  & \mathcal{C}_7
    \left(
    \norma{v'_L}_{\L\infty(\reali; \reali)}{\mathcal{C}_6}
    + \mathcal{C}_7 \norma{v'_L}_{\L\infty(\reali; \reali)}
    +  V_L + \norma{v'_L}_{\L\infty(\reali; \reali)}R_L
    \right)
    \modulo{t-\widehat t}
  &[\mbox{By Lemma~\ref{lem:continuity}}]
  \\
  \label{eq:77}
  \leq \;
  & \mathcal{C}
    \left(
    \begin{array}{c}
      \norma{\eta}_{\W3\infty (\reali;\reali^k)},
      \norma{v_{NL}}_{\W3\infty (\reali;\reali^k)},
      R_L, V_L,
      \\
      \norma{v'_L}_{\W1\infty (\reali;\reali)},
      \norma{\rho_o}_{\W21 (\reali;\reali^k)},
      \norma{r_o}_{\cBV{}(\reali;\reali)},
      T
    \end{array}
    \right)
    \, \modulo{t-\widehat t}
\end{flalign}
completing the proof to the local Lipschitz continuity in time.

\begin{enumerate}[label=\bf{Step~10.}, ref=\textup{\textbf{Step~10}},
  align=left]
    \item
    \textbf{Definition and properties of $\mathcal{S}$.}
  \end{enumerate}

  By the arbitrariness of $T$, the above steps allow to define the map
  \begin{displaymath}
    \mathcal{S}
    \colon
     \reali_+
      \times (\C2 \cap \W2\infty \cap \W21) (\reali;\reali^k)
      \times \cBV{} (\reali; \reali)
     \longrightarrow
      (\C2 \cap \W2\infty \cap \W21) (\reali;\reali^k)
      \times \cBV{} (\reali; \reali)
  \end{displaymath}
  by
  $\mathcal{S}_t (\rho_o,r_o) \coloneqq \left(\rho (t), r (t)\right)$,
  where $(\rho,r)$ is the solution constructed above.

  By Definition~\ref{def:sol} and the uniqueness of the solutions
  to~\eqref{eq:53} and to~\eqref{eq:29}, for any $t_1>0$, the map
  $t \mapsto (\rho,r) (t_1+t)$ solves
  \begin{displaymath}
    \begin{array}{@{}l@{\quad}r@{\,}c@{\,}l@{}}
      \left\{
      \begin{array}{@{}l@{}}
        \partial_t \tilde\rho^i +
        \partial_x \left(\tilde\rho^i \, \tilde v^i (t,x)\right) =0
        \\
        \tilde\rho^i (0,x) = \rho^i (t_1, x)
      \end{array}
      \right.
      &
        \tilde v^i (t,x)
      & \coloneq
      & v_{NL}^i\left(
        \left(\Sigma \tilde \rho  (t_1+t) * \eta^i\right) (x)
        + \left(\tilde r (t_1+t)*\eta^i\right) (x)\right)
      \\
      \left\{
      \begin{array}{@{}l@{}}
        \partial_t \tilde r +
        \partial_x \left(\tilde r \, \tilde w (t,x,\tilde r)\right) =0
        \\
        \tilde r (0,x) = r (t_1, x)
      \end{array}
      \right.
      & \tilde w (t,x,r)
      & \coloneq
      & v_L\left(\Sigma \tilde \rho (t_1+t,x)  + \tilde r\right)
    \end{array}
  \end{displaymath}
  which ensures that
  $\mathcal{S}_t \circ \mathcal{S}_{t_1} = \mathcal{S}_{t+t_1}$, showing
  that $\mathcal{S}$ is a semigroup.

  For fixed $M>0$, call $B_M$ the set of pairs $(\rho_o, r_o)$ satisfying~\eqref{eq:67}. The restriction of $\mathcal{S}$ to $[0,T] \times B_M$ is Lipschitz continuous with respect to the modulus in $t$ and the norm~\eqref{eq:66} in $(\rho_o,r_o)$ with a Lipschitz constant that, by~\eqref{eq:81}, \eqref{eq:76} and~\eqref{eq:77} results of the type
  \begin{displaymath}
    \mathcal{C}
    \left(
      \norma{\eta}_{\W3\infty (\reali; \reali^k)},
      \norma{v_{NL}}_{\W3\infty (\reali;\reali^k)},
      R_L,
      V_L,
      \norma{v'_L}_{\W1\infty (\reali;\reali)},
      M,
      T
    \right) \,.
  \end{displaymath}
  By uniform continuity, $\mathcal{S}$ can be extended to $[0,T] \times \overline{B}_M$, where $\overline{B}_M$ is the closure of $B_M$ with respect to the norm~\eqref{eq:66}, which is
  \begin{equation}
    \label{eq:80}
    \overline{B}_M =
    \left\{
      (\rho, r) \in \cBV1 (\reali;\reali^k) \times \cBV{} (\reali; \reali)
      \colon
      \norma{\rho}_{\cBV1(\reali; \reali^k)}
      +
      \norma{r}_{\cBV{} (\reali;\reali)} \leq M
    \right\}
  \end{equation}
by Lemma~\ref{lem:lui}. The arbitrariness of $T$ and $M$ allow to further extend $\mathcal{S}$ as in~\eqref{eq:79}.

To prove the properties of $\mathcal{S}$, fix $(\rho_o,r_o)$ in the
domain of $\mathcal{S}$ and choose a sequence $(\rho_{o,n}, r_{o,n})$ in
$(\C2 \cap \W2\infty \cap \W21) (\reali;\reali^k) \times (\L1 \cap
\BV{}) (\reali; \reali)$ converging to $(\rho_o,r_o)$ with respect to~\eqref{eq:66}.

For all
$t_1,t_2 \in \reali_+$ we have
$(\mathcal{S}_{t_1} \circ \mathcal{S}_{t_2}) (\rho_{o,n},r_{o,n}) =
\mathcal{S}_{t_1+t_2} (\rho_{o,n},r_{o,n})$, passing to the limit
$n \to +\infty$ we prove~\ref{item:1} using \Cref{lem:GoodSol}.

To prove~\ref{item:23} apply \Cref{lem:GoodSol} with $\epsilon_n = e_n = 0$.

Property~\ref{item:24} states the Lipschitz continuity of $\mathcal{S}$ on $[0,T] \times \overline{B}_M$ as defined in~\eqref{eq:80}. This holds true by the very construction of $\mathcal{S}$, while  the form of the Lipschitz constant follows from~\eqref{eq:81}, \eqref{eq:76} and~\eqref{eq:77}.
Similarly, \ref{item:3} follows from~\eqref{apriori1}, \eqref{apriori3}, \eqref{eq:69} and Lemma~\ref{lem:lui}.

The positivity of $\rho$ follows from~\ref{item:22} in
Lemma~\ref{lem:4}. Corollary~\ref{cor:pos} ensures both the invariance
of $[0,R_L]$ and the conservation of the $\L1$ norm, completing the
proof of~\ref{item:13}.

The proof of \Cref{thm:main} is completed.

\subsection{Proof of Theorem~\ref{thm:unique}}
\label{subsec:proof-theor-refthm}

Fix a positive $\epsilon$ and choose
$\rho_o^\epsilon \in \W21 (\reali; \reali^k)$ such that
\begin{equation}
  \label{eq:39}
  \norma{\rho_o^\epsilon - \rho_o}_{\W11 (\reali; \reali^k)} < \epsilon
  \quad \mbox{ and } \quad
  \norma{\rho_o^\epsilon}_{\W21 (\reali; \reali^k)} \leq
  \norma{\rho_o}_{\cBV1 (\reali; \reali^k)} \,.
\end{equation}
Call $(\rho^\epsilon, r^\epsilon)$ the solution to~\eqref{eq:1} with
initial datum $(\rho_o^\epsilon, r_o)$ constructed in
Theorem~\ref{thm:main}.

Definition~\ref{def:sol} allows to use Lemma~\ref{lem:Linfty} with
$\rho = \rho_1$, $\widehat\rho = \rho_\epsilon$ leading to the
estimate
\begin{equation}
  \label{eq:83}
  \norma{\rho_1 (t) - \rho^\epsilon (t)}_{\W11 (\reali; \reali^k)}
  \leq
  (1+\mathcal{C}_{3}\,t)
  \norma{\rho_o - \rho_o^\epsilon}_{\W11 (\reali;\reali^k)}
  +
  \mathcal{C}_{3} \int_0^t
  \norma{r_1 (\tau) - r^\epsilon (\tau)}_{\L1 (\reali;\reali)} \d\tau
\end{equation}
where $\mathcal{C}_3$ is defined in~\eqref{eq:57}. Similarly,
Definition~\ref{def:sol} also allows to use Lemma~\ref{distloc} with
$r = r^\epsilon$ and $\widehat{r} = r_1$, leading to
\begin{equation}
  \label{eq:84}
  \norma{r_1(t) - r^\epsilon(t)}_{\L1(\reali; \reali)}
  \leq
  \mathcal{C}_{11} \,
  t \, \norma{\rho_1 - \rho^\epsilon}_{\C0([0,t]; \W11(\reali; \reali^k ))}\,.
\end{equation}
where
\begin{displaymath}
  \mathcal{C}_{11} \coloneqq
  \mathcal{C} \left(
    \begin{array}{@{}c@{}}
      R_L,
      \norma{v'_L}_{\W1\infty(\reali; \reali)},
      \norma{\rho^\epsilon}_{\C0([0,t]; \W1\infty(\reali; \reali^k)},
      \\
      \norma{\rho_1}_{\C0([0,t]; \W1\infty(\reali; \reali^k)},
      \norma{\partial_x \rho^\epsilon}_{\L1([0,t]; \W11(\reali; \reali^k))},
      \tv(r_o), t
    \end{array}
  \right) \,.
\end{displaymath}
Remark that the above expression allows to cope with the different
regularities of $\rho_1$ and $\rho^\epsilon$. Proceed as
in~\ref{lem:9} in the proof of Theorem~\ref{thm:main} to estimate the
arguments in the right hand side, obtaining
\begin{displaymath}
  \norma{\rho^\epsilon}_{\C0([0,t]; \W1\infty(\reali; \reali^k)}
  \leq
  \mathcal{C}_6
  \quad \mbox{ and } \quad
  \norma{\partial_x \rho^\epsilon}_{\L1([0,t]; \W11(\reali; \reali^k))}
  \leq
  \mathcal{C}_6
\end{displaymath}
where $\mathcal{C}_6$, defined in~\eqref{eq:85}, is bounded uniformly
in $\epsilon$ and, hence, the same holds for $\mathcal{C}_{11}$.

Insert~\eqref{eq:83} in~\eqref{eq:84} and apply Gronwall Lemma to
obtain
\begin{eqnarray*}
  \norma{r_1(t) - r^\epsilon(t)}_{\L1(\reali; \reali)}
  & \leq
  & (1+\mathcal{C}_{3}\,t) \,
    \norma{\rho_o - \rho_o^\epsilon}_{\W11 (\reali;\reali^k)}
    \exp( \mathcal{C}_{3} \, \mathcal{C}_{11} \, t^2)
  \\
  & \leq
  & (1+\mathcal{C}_{3}\,t) \,
    \exp( \mathcal{C}_{3} \, \mathcal{C}_{11} \, t^2) \, \epsilon \,.
\end{eqnarray*}
Insert the latter estimate in~\eqref{eq:83} and use~\eqref{eq:39} to
get
\begin{displaymath}
  \norma{\rho_1 (t) - \rho^\epsilon (t)}_{\W11 (\reali; \reali^k)}
  \leq
  (1+\mathcal{C}_{3}\,t)
  \left(1
    +
    \mathcal{C}_{3} \, t
    \exp( \mathcal{C}_{3} \, \mathcal{C}_{11} \, t^2)\right)
  \epsilon \,.
\end{displaymath}
Repeat the same procedure replacing $\rho_1$ with $\rho_2$ so that, by
the triangle inequality, we get
\begin{displaymath}
  \norma{\rho_1 (t) - \rho_2 (t)}_{\W11 (\reali; \reali^k)}
  +
  \norma{r_1(t) - r_2(t)}_{\L1(\reali; \reali)}
  \leq
  2 (1+\mathcal{C}_{3}\,t)
  \left(1
    +
    2 \, \mathcal{C}_{3} \, t
    \exp( \mathcal{C}_{3} \, \mathcal{C}_{11} \, t^2)\right)
  \epsilon \,.
\end{displaymath}
Since $\epsilon$ is arbitrary, the proof is completed.

\section*{Acknowledgments}
The authors were partly supported by the GNAMPA~2025 project
\emph{Modellli di Traffico, di Biologia e di Dinamica dei Gas Basati
  su Sistemi di Equazioni Iperboliche}. They also acknowledge the
PRIN~2022 project \emph{Modeling, Control and Games through Partial
  Differential Equations} (CUP~D53D23005620006), funded by the
European Union - Next Generation EU.  MG was partly supported by the
European Union-NextGeneration EU (National Sustainable Mobility Center
CN00000023, Italian Ministry of University and Research Decree
n.~1033-17/06/2022, Spoke 8) and by the Project funded under the
National Recovery and Resilience Plan (NRRP) of Italian Ministry of
University and Research funded by the European Union-NextGeneration
EU. Award Number: ECS\_00000037, CUP: H43C22000510001,
MUSA-Multilayered Urban Sustainability Action. CN was supported by
the PNRR project \emph{Ricerca DM 118/2023}, CUP~F13C23000360007.

\section*{Conflict  of Interest}
The authors declare no conflicts of interest in this paper.

{\small

  \bibliographystyle{abbrv}

  \bibliography{ColomboGaravelloNocita}

\begin{thebibliography}{10}

\bibitem{aggarwal2023systems}
A.~Aggarwal, H.~Holden, and G.~Vaidya.
\newblock Error {Estimates} for {Systems} of {Nonlocal} {Balance} {Laws}
  {Modeling} {Dense} {Multilane} {Vehicular} {Traffic}.
\newblock Preprint, {arXiv}:2312.16928 [math.{NA}] (2023), 2023.

\bibitem{aggarwal2023wellposedness}
A.~Aggarwal, H.~Holden, and G.~Vaidya.
\newblock Well-posedness and error estimates for coupled systems of nonlocal
  conservation laws.
\newblock {\em IMA J. Numer. Anal.}, 44(6):3354--3392, 2024.

\bibitem{MR1857292}
L.~Ambrosio, N.~Fusco, and D.~Pallara.
\newblock {\em Functions of bounded variation and free discontinuity problems}.
\newblock Oxford Mathematical Monographs. The Clarendon Press, Oxford
  University Press, New York, 2000.

\bibitem{MR2020123}
S.~Benzoni-Gavage and R.~M. Colombo.
\newblock An {$n$}-populations model for traffic flow.
\newblock {\em European J. Appl. Math.}, 14(5):587--612, 2003.

\bibitem{MR3447130}
S.~Blandin and P.~Goatin.
\newblock Well-posedness of a conservation law with non-local flux arising in
  traffic flow modeling.
\newblock {\em Numer. Math.}, 132(2):217--241, 2016.

\bibitem{MR1816648}
A.~Bressan.
\newblock {\em Hyperbolic systems of conservation laws}, volume~20 of {\em
  Oxford Lecture Series in Mathematics and its Applications}.
\newblock Oxford University Press, Oxford, 2000.

\bibitem{zbMATH05197323}
A.~Bressan and B.~Piccoli.
\newblock {\em Introduction to the mathematical theory of control}, volume~2 of
  {\em AIMS Ser. Appl. Math.}
\newblock Springfield, MO: American Institute of Mathematical Sciences (AIMS),
  2007.

\bibitem{bressan_entropy_2021}
A.~Bressan and W.~Shen.
\newblock Entropy admissibility of the limit solution for a nonlocal model of
  traffic flow.
\newblock {\em Comm. Math. Sci.}, 19(5):1447--1450, Nov. 2021.
\newblock Publisher: International Press of Boston.

\bibitem{zbMATH05633610}
H.~Brezis.
\newblock {\em Functional analysis, {Sobolev} spaces and partial differential
  equations}.
\newblock Universitext. New York, NY: Springer, 2011.

\bibitem{MR4731004}
F.~A. Chiarello, J.~Friedrich, and S.~G\"{o}ttlich.
\newblock A non-local traffic flow model for 1-to-1 junctions with buffer.
\newblock {\em Netw. Heterog. Media}, 19(1):405--429, 2024.

\bibitem{MR4238781}
I.~Ciotir, R.~Fayad, N.~Forcadel, and A.~Tonnoir.
\newblock A non-local macroscopic model for traffic flow.
\newblock {\em ESAIM Math. Model. Numer. Anal.}, 55(2):689--711, 2021.

\bibitem{MR4651679}
G.~M. Coclite, J.-M. Coron, N.~De~Nitti, A.~Keimer, and L.~Pflug.
\newblock A general result on the approximation of local conservation laws by
  nonlocal conservation laws: the singular limit problem for exponential
  kernels.
\newblock {\em Ann. Inst. H. Poincar\'{e} C Anal. Non Lin\'{e}aire},
  40(5):1205--1223, 2023.

\bibitem{zbMATH07839635}
G.~M. Coclite, K.~H. Karlsen, and N.~H. Risebro.
\newblock A nonlocal {Lagrangian} traffic flow model and the zero-filter limit.
\newblock {\em Z. Angew. Math. Phys.}, 75(2):31, 2024.
\newblock Id/No 66.

\bibitem{zbMATH07658225}
M.~Colombo, G.~Crippa, E.~Marconi, and L.~V. Spinolo.
\newblock Nonlocal traffic models with general kernels: singular limit, entropy
  admissibility, and convergence rate.
\newblock {\em Arch. Ration. Mech. Anal.}, 247(2):32, 2023.
\newblock Id/No 18.

\bibitem{MR3961295}
M.~Colombo, G.~Crippa, and L.~V. Spinolo.
\newblock On the singular local limit for conservation laws with nonlocal
  fluxes.
\newblock {\em Arch. Ration. Mech. Anal.}, 233(3):1131--1167, 2019.

\bibitem{OperaPrima}
R.~M. Colombo, M.~Garavello, and C.~Nocita.
\newblock General stability estimates in nonlocal traffic models for several
  populations.
\newblock {\em NoDEA Nonlinear Differential Equations Appl.}, 32(2):Paper No.
  34, 29, 2025.

\bibitem{MR2512832}
R.~M. Colombo, M.~Mercier, and M.~D. Rosini.
\newblock Stability and total variation estimates on general scalar balance
  laws.
\newblock {\em Commun. Math. Sci.}, 7(1):37--65, 2009.

\bibitem{MR3468916}
C.~M. Dafermos.
\newblock {\em Hyperbolic conservation laws in continuum physics}, volume 325
  of {\em Grundlehren der mathematischen Wissenschaften}.
\newblock Springer-Verlag, Berlin, fourth edition, 2016.

\bibitem{MR1022305}
R.~J. DiPerna and P.-L. Lions.
\newblock Ordinary differential equations, transport theory and {S}obolev
  spaces.
\newblock {\em Invent. Math.}, 98(3):511--547, 1989.

\bibitem{MR4613802}
Q.~Du, K.~Huang, J.~Scott, and W.~Shen.
\newblock A space-time nonlocal traffic flow model: relaxation representation
  and local limit.
\newblock {\em Discrete Contin. Dyn. Syst.}, 43(9):3456--3484, 2023.

\bibitem{MR2597943}
L.~C. Evans.
\newblock {\em Partial differential equations}, volume~19 of {\em Graduate
  Studies in Mathematics}.
\newblock American Mathematical Society, Providence, RI, second edition, 2010.

\bibitem{MR1681462}
G.~B. Folland.
\newblock {\em Real analysis}.
\newblock Pure and Applied Mathematics (New York). John Wiley \& Sons, Inc.,
  New York, second edition, 1999.
\newblock Modern techniques and their applications.

\bibitem{MR4104950}
M.~Garavello, P.~Goatin, T.~Liard, and B.~Piccoli.
\newblock A multiscale model for traffic regulation via autonomous vehicles.
\newblock {\em J. Differential Equations}, 269(7):6088--6124, 2020.

\bibitem{MR4720869}
M.~Garavello and F.~Marcellini.
\newblock Autonomous vehicles driving traffic: the {C}auchy problem.
\newblock {\em Commun. Math. Sci.}, 22(3):817--844, 2024.

\bibitem{CUThayat2023trafficsmoothingusingexplicit}
A.~Hayat, A.~Alanqary, R.~Bhadani, C.~Denaro, R.~J. Weightman, S.~Xiang, J.~W.
  Lee, M.~Bunting, A.~Gollakota, M.~W. Nice, et~al.
\newblock Traffic smoothing using explicit local controllers.
\newblock {\em arXiv preprint arXiv:2310.18151}, 2023.

\bibitem{MR3670045}
A.~Keimer and L.~Pflug.
\newblock Existence, uniqueness and regularity results on nonlocal balance
  laws.
\newblock {\em J. Differential Equations}, 263(7):4023--4069, 2017.

\bibitem{MR3944408}
A.~Keimer and L.~Pflug.
\newblock On approximation of local conservation laws by nonlocal conservation
  laws.
\newblock {\em J. Math. Anal. Appl.}, 475(2):1927--1955, 2019.

\bibitem{MR267257}
S.~N. Kru\v{z}kov.
\newblock First order quasilinear equations with several independent variables.
\newblock {\em Mat. Sb. (N.S.)}, 81(123):228--255, 1970.

\bibitem{CUTlee2024trafficcontrolconnectedautomated}
J.~W. Lee, H.~Wang, et~al.
\newblock Traffic control via connected and automated vehicles: An open-road
  field experiment with 100 {C}{A}{V}s, 2024.

\bibitem{LighthillWhitham}
M.~J. Lighthill and G.~B. Whitham.
\newblock On kinematic waves. {I}{I}. {A} theory of traffic flow on long
  crowded roads.
\newblock {\em Proc. Roy. Soc. London. Ser. A.}, 229:317--345, 1955.

\bibitem{Rasouli2020900}
A.~Rasouli and J.~K. Tsotsos.
\newblock Autonomous vehicles that interact with pedestrians: A survey of
  theory and practice.
\newblock {\em IEEE Transactions on Intelligent Transportation Systems},
  21(3):900 – 918, 2020.

\bibitem{Richards}
P.~I. Richards.
\newblock Shock waves on the highway.
\newblock {\em Operations Res.}, 4:42--51, 1956.

\bibitem{Schieben201969}
A.~Schieben, M.~Wilbrink, C.~Kettwich, R.~Madigan, T.~Louw, and N.~Merat.
\newblock Designing the interaction of automated vehicles with other traffic
  participants: design considerations based on human needs and expectations.
\newblock {\em Cognition, Technology and Work}, 21(1):69 – 85, 2019.

\bibitem{Sun2023}
L.~Sun, Z.~Cheng, D.~Kong, Y.~Xu, S.~Wen, and K.~Zhang.
\newblock Modeling and analysis of human-machine mixed traffic flow considering
  the influence of the trust level toward autonomous vehicles.
\newblock {\em Simulation Modelling Practice and Theory}, 125, 2023.

\end{thebibliography}

}

\end{document}